\newtheorem{theorem}{Theorem}[section]
\newtheorem{lemma}{Lemma}[section]
\newtheorem{proposition}{Proposition}[section]
\newtheorem{corollary}{Corollary}[section]
\theoremstyle{definition}
\newtheorem{definition}{Definition}[section]
\theoremstyle{remark}
\newtheorem{remark}{Remark}[section]
\numberwithin{equation}{section}
\def\f{\frac}
\def\hf1{^\f{1}{1-\xi^2}}
\def\be{\begin{equation}}
\def\en{\end{equation}}
\def\bs{\begin{split}}
\def\es{\end{split}}
\def\ba{\begin{align}}
\def\ea{\end{align}}
\title[Well-posedness for the stochastic electrokinetic flow]
{Well-posedness for the stochastic electrokinetic flow}
\author[Z. Qiu]{Zhaoyang Qiu}
\address{School of Applied Mathematics, Nanjing University of Finance and Economics, Nanjing, 210046, China.}
\email{zhqmath@163.com}
\author[H. Wang]{Huaqiao Wang}
\address{College of Mathematics and Statistics, Chongqing University, Chongqing, 401331, China.}
\email{wanghuaqiao@cqu.edu.cn}
\keywords{Stochastic Nernst-Planck-Navier-Stokes system, weak martingale solution, maximal pathwise solution, global pathwise solution, stochastic compactness, blow-up criterion, fixed point theorem}
\subjclass[2010]{35Q35, 76D05, 35R60, 60F10}
\date{\today}
\begin{document}
\begin{abstract}
We investigate the stochastic electrokinetic flow modelled by a stochastic Nernst-Planck-Navier-Stokes system with a blocking boundary conditions for ionic species concentrations in a smooth bounded domain $\mathcal{D}$. %Here the system is perturbed by multiplicative noise. %Several results are established in this paper.
In both $2D$ and $3D$ cases, we establish the global existence of weak martingale solution when the capacitance $\varsigma>0$, % which is weak in both PDEs and probability sense,
and also establish the existence of a unique maximal strong pathwise solution %which is strong in PDEs and probability sense
and a blow-up criterion when the capacitance  $\varsigma=0$. In particular, we show that the maximal pathwise solution is global in $2D$ case without the restriction of smallness of initial data.
\end{abstract}

\maketitle
\section{Introduction}

Electrokinetic flows could be formed when the viscous Newtonian fluid with lots of positive and negative charged particles on the micro and nano scale acted by electrical force. The interaction between the charged particles and the fluid causes electrokinetic effect which includes two most well-known phenomena in physics: electrophoresis and electro-osmotic. Electrophoresis phenomenon describes the colloidal particle drew by an external electrical force through a fluid, while electro-osmotic characterizes the motion of an aqueous solution past a solid wall dragged by an electrical field. Here, we only focus on the phenomenon of electro-osmotic which captures many interesting features in the nature, therefore it could be applied to various specific areas, such as cooling system in microelectronics, separation and mixing techniques in analytical chemistry, filtration processes and material sciences, see \cite{bothe2, chang, prob, kirby} and for more applications see \cite{bothe, rubin, troj}.  In mathematics, the researchers use the Nernst-Planck-Navier-Stokes system to model the electro-osmotic flow. The incompressible Navier-Stokes equations as a subsystem of the Nernst-Planck-Navier-Stokes system, read as
$$\partial_t u-\mu\Delta u+(u\cdot \nabla)u-\nabla p+ \mathbf{F} =0,$$
with the divergence free condition
$${\rm div}u=0,$$
where $u$ is the velocity, $p$ is the pressure, $\mu$ is the viscosity coefficient. $\mathbf{F}:=-\rho\mathbf{E}$ is the Coulomb force, where $\rho$ is the local charge density given by
\begin{align}\label{e1.1}
\rho=\sum_{i=1}^{m}z_i\mathbf{c}^i,
\end{align}
for $m\in \mathbb{N}^+$ and $\mathbf{E}$ is the electric field
$$\mathbf{E}=-\nabla \Psi.$$
The electrical potential $\Psi$ solves the Poisson equation:
$$-\Delta \Psi=\rho.$$
The non-negative function $\mathbf{c}^i, i=1,2,\cdots, m,$ in \eqref{e1.1} is the $i$-th ionic species concentrations satisfying the following Nernst-Planck equations:
\begin{align}\label{1.2}
\partial_t\mathbf{c}^i+{\rm div}b^i=0,
\end{align}
where the fluxes $b^i$ is given by
$$u\mathbf{c}^i-a_i\nabla\mathbf{c}^i-a_i\frac{ez_i}{\mathbf{B}T}\mathbf{c}^i\nabla\Psi,$$
here, $T$ is temperature, $\mathbf{B}$ is the Boltzmann constant, $e$ is the elementary charge, the constants $a_i>0, i=1,2,\cdots, m$ are the ionic diffusivity, $z_i\in \mathbb{R}$ represent the valence. The single Nernst-Planck equations \eqref{1.2} also have wide range of applications. One of applications is in the area of semiconductor theory. In addition, the equations make it possible to characterize the function of neurons with the influence of electrical and chemical conduction which will conduce to well-understand the behaviour of neurons membrane, see \cite{biler2,corr,gaj1} for more details. For the related results of system \eqref{1.2}, in the literatures \cite{biler,choi,gaj}, the global existence and stability of solution have been investigated.

The transport of particles is easily affected by external random noise. Therefore, equations influenced by a random fluctuation contribute to the uncertainty in modelling the fluid dynamical systems, the uncertainty and randomness have a far-reaching impact for our understanding of the complex dynamical phenomenon, especially in biology, climate dynamics, gene regulation system. The monograph \cite{duan} gives more introduction of the stochastic dynamics and their applications. Therefore, to rigorous understand the turbulence and study stochastic PDEs, the randomness must be taken into account, which has been commonly accepted as an important research field both in theoretical analysis and practical application in the last decades. Here, we perturb the Navier-Stokes equations governing the fluids by a multiplicative noise, therefore the stochastic Nernst-Planck-Navier-Stokes system reads as the following:
\begin{eqnarray}\label{Equ1.1}
\left\{\begin{array}{ll}
\!\!\!\partial_t\mathbf{c}^i+u\cdot \nabla \mathbf{c}^i=a_i{\rm div}(\nabla \mathbf{c}^i+z_i\mathbf{c}^i\nabla\Psi),~i=1,2,\cdots, m,\\
\!\!\!-\Delta \Psi=\sum_{i=1}^m z_i\mathbf{c}^i=\rho,\\
\!\!\!\partial_t u-\mu\Delta u+(u\cdot \nabla)u-\nabla p+ \kappa\rho\nabla \Psi =f(u, \nabla \Psi)\frac{d\mathcal{W}}{dt},\\
\!\!\!\nabla\cdot u=0,\\
\end{array}\right.
\end{eqnarray}
where $f$ is a noise intensity operator,  $\kappa$ is a constant related to the temperature $T$ and the Boltzmann constant $\mathbf{B}$, $\mathcal{W}$ is a cylindrical Wiener process, the specific definition will be given in Section 2. System \eqref{Equ1.1} is imposed on the following initial data:
\begin{align}
u(0, x)=u_0, ~\mathbf{c}^i(0,x)=\mathbf{c}^i_0,
\end{align}
where we omit the random element $\omega$.
The velocity $u$ is equipped with the no-flux boundary condition:
\begin{align}\label{e1.5}
u|_{\partial \mathcal{D}}=0,
\end{align}
and the Poisson equation is equipped with the Robin boundary condition:
\begin{align}\label{e1.6}
(\partial_{\vec{n}}\Psi(x,t)+\varsigma\Psi(x,t))|_{\partial \mathcal{D}}=\eta,
\end{align}
where $\eta=\eta(x)$ is a given smooth function  that is an externally applied potential on the boundary in physics, the constant $\varsigma$ represents the capacitance of the double layer and $\vec{n}$ is the unit outward normal vector to the boundary $\partial\mathcal{D}$. The elliptic boundary value problem $\eqref{Equ1.1}_2$, \eqref{e1.6} originated from the Maxwell equations of electrostatics. For the ionic concentrations $\mathbf{c}^i$ we consider blocking boundary condition:
\begin{align}\label{e1.7}
(\partial_{\vec{n}}\mathbf{c}^i+z_i\mathbf{c}^i\partial_{\vec{n}}\Psi)|_{\partial \mathcal{D}}=0,
\end{align}
where boundary condition \eqref{e1.7} is closely related to the occurrence of electrochemical double layers in physics.

Taking integral on both sides of equation $\eqref{Equ1.1}_1$, using the boundary conditions \eqref{e1.5} and \eqref{e1.7}, from the physical point of view it is significant to  find that the function $\mathbf{c}^i$ satisfies the conservation form:\vspace{-2.2mm}
\begin{align}\label{e1.8}
\int_{\mathcal{D}}\mathbf{c}^idx=\int_{\mathcal{D}}\mathbf{c}^i_0dx,~ {\rm for~ all}~ t\geq 0.
\end{align}
Since the function $\mathbf{c}^i$ is non-negative, we infer that $\|\mathbf{c}^i(t)\|_{L^1}=\|\mathbf{c}^i_0\|_{L^1}$ for all $t\geq 0$.

Corresponding to different boundary conditions with different physical meanings, the deterministic Nernst-Planck-Navier-Stokes system has been investigated extensively in many literatures. The existence and in some cases uniqueness of weak solution with homogeneous Neumann boundary condition was proved in \cite{schm} for dimension $d\leq 3$, in addition, the local strong solution was verified under the assumption of no-flux boundary condition of ionic species concentrations. With respect to different species diffusivity and the magnitudes of the valences, many outstanding results were built in recent years. The global existence of weak solution with blocking boundary condition in $3D$ was proved in \cite{fischer}.  In \cite{bothe}, the authors proved the existence and uniqueness of local strong solution for any $d\geq 2$, and when $d=2$, the existence of unique global strong solution and exponential convergence to uniquely determined steady states with  blocking boundary condition for the ions and a Robin boundary condition for the electric potential were built.  For ionic concentrations that satisfy both blocking and selective boundary conditions, \cite{const} proved the global existence of strong solution in $2D$, and in the case of uniform selective boundary conditions, the solution was proved unconditional global stability, which converged to unique selected Boltzmann states. For more publications, we refer the readers to \cite{bedin, consta, jer, jero, ryham} which dealt with different physical boundary situations and the references therein.

Note that there is no work available on the stochastic version of system \eqref{Equ1.1}-\eqref{e1.7}, we are the first to consider the stochastic issues. However, for the stochastic Navier-Stokes equations, many remarkable literatures have been published, see \cite{Temam, Millet, Breckner, brec, Brze, cap, Real, Flandoli, Flandoli2, mena, Glatt, Rozovskii, ww}. In this paper, the existence of global weak martingale solution is proved in $2D$ and $3D$ cases under the assumption that two valences are opposite, also the existence of unique maximal strong pathwise solution is proved for arbitrary valences, species diffusivity, and then the blow-up criterion is established. In particular,  we prove that the maximal strong pathwise solution is global in time for large initial data in $2D$ case.

Since the Navier-Stokes equations as a subsystem, we can not expect any better result than it. But in our case, coupled constitution and the boundary conditions representing the presence of an electrical double layer are more complex, making the proof technically especially in constructing the approximate solutions and the a priori estimates. We adopt different strategy to build the approximate solutions. For one thing, in the process of proving weak martingale solution, we use the mixed method to build the approximate solutions. In this step, a two-layer approximate scheme is developed to surmount the obstacle stemming from stochastic integral. For another, in the process of proving strong pathwise solution, due to the restriction of blocking boundary conditions, we construct the smooth approximate solutions by linearizing the modified stochastic system to a linear system with additive noise and Neumann boundary condition. Therefore, after defining a suitable mapping, proving the well-posedness of approximate solutions of original system is equivalent to finding a fixed point of the mapping. The random element $\omega$ brings difficulties in closing the estimate of the mapping, we will further develop a stopping time technique to overcome it. Besides, we have to handle estimate of the boundary integral arising from the blocking boundary.

The rest of paper is organized as follows. In Section 2, we give some preliminaries, classical results and assumptions. Section 3 is devote to establishing the global existence of weak martingale solution by three steps for two oppositely charged ionic species. In Section 4, we establish the existence and uniqueness of maximal strong pathwise solution following the line of Yamada-Watanabe argument, and establish a blow-up criterion. In Section 5, we extend the maximal strong pathwise solution to a global solution in $2D$ case.

\section{Preliminaries}
In this section, we recall some deterministic, stochastic preliminaries and results used throughout this paper.

 For the smooth bounded domain $\mathcal{D}$, let $W^{k,p}(\mathcal{D}), k\in\mathbb{N}, p\in [1,\infty)$ be the Sobolev space having distributional derivatives up to $k$, and the derivatives are integrable in $L^p(\mathcal{D})$. When $p=2$, we denote by $H^k(\mathcal{D})=W^{k,2}(\mathcal{D})$.  For $\alpha\in (0,1)$, let $C^\alpha([0,T];X)$ be the $X$-valued $\alpha$-order H\"{o}lder continuous space with respect to time $t$, $L_w^p(0,T;X)$ be the space $L^p(0,T; X)$ with the weak topology, where $X$ is a Banach space. The notations $\mathcal{B}(X)$, $X^*$ represent the Borel $\sigma$-algebra of $X$ and the dual space of $X$, respectively.

 To define the variational setting, we introduce the following spaces:
\begin{align*}
&\mathcal{V}:=\big\{u: u\in C^\infty_c(\mathcal{D}), ~{\rm div} u=0\big\},\\
&\mathbb{L}^p(\mathcal{D}):=\overline{\mathcal{V}}^{\|\cdot\|_{L^p(\mathcal{D})}}, ~ \mathbb{H}^1(\mathcal{D}):=\overline{\mathcal{V}}^{\|\cdot\|_{H^1(\mathcal{D})}},
\end{align*}
for $p\in[1, \infty)$, endowed with the norms
$$\|\cdot\|_{\mathbb{L}^p}:=\|\cdot\|_{L^p(\mathcal{D})}, ~~\|\cdot\|_{\mathbb{H}^1}:=\|\nabla\cdot\|_{L^2(\mathcal{D})}.$$
When $p=2$, $\|\cdot\|_{\mathbb{L}^2}=\|\cdot\|_{L^2(\mathcal{D})}=(\cdot, \cdot)$ where $(\cdot, \cdot)$ means the inner product of $L^2(\mathcal{D})$. Thanks to the Poincar\'{e} inequality and the no-flux boundary condition of $u$, the norm of $\mathbb{H}^1(\mathcal{D})$ is equivalent to the norm of $H^1(\mathcal{D})$.

 The elliptic Neumann problem takes the form:
\begin{eqnarray*}
\left\{\begin{array}{ll}
\!\!\!-\Delta \pi =h,~ &{\rm in} ~\mathcal{D},\\
\!\!\!\partial_{\vec{n}} \pi =g, ~&{\rm on}~ \partial\mathcal{D},
\end{array}\right.
\end{eqnarray*}
where $h,g$ are given smooth functions. We recall the result in \cite{niren} giving the estimate
\begin{align}\label{e1.3}
\|\nabla \pi\|_{W^{k,p}(\mathcal{D})}\leq C\left(\|h\|_{W^{k-1,p}(\mathcal{D})}+\|g\|_{W^{k-\frac{1}{p},p}(\partial\mathcal{D})}\right),
\end{align}
where $C=C(k, p, \mathcal{D})>0$ is a constant.

Introducing the following lemma (see \cite{simon}), it could be applied for the time regularity estimate.
\begin{lemma}\label{lem6.1} Let $r^*=\left\{\begin{array}{ll}
\!\!\frac{dr}{d-r}, &{\rm if } ~r<d,\\
\!\!  {\rm any ~finite~nonnegetive~ real~ number},~&{\rm if}~r=d,\\
\!\! \infty, &{\rm if } ~r>d,
\end{array}\right.$ where $d=2,3$.  For $1\leq r\leq \infty, 1\leq s\leq \infty$, if $\frac{1}{r}+\frac{1}{s}\leq 1$, and $\frac{1}{r^*}+\frac{1}{s}=\frac{1}{t}$, $f\in W^{1,r}$ and $g\in W^{-1,s}$, then $fg\in W^{-1,t}$, that is,
$$\|fg\|_{W^{-1,t}}\leq \|f\|_{W^{1,r}}\|g\|_{W^{-1,s}}.$$
\end{lemma}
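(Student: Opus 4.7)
The plan is to prove the estimate by duality. Since $W^{-1,t}(\mathcal{D})$ is the dual of $W_0^{1,t'}(\mathcal{D})$ with $\tfrac{1}{t}+\tfrac{1}{t'}=1$, I would write
\[
\|fg\|_{W^{-1,t}} \;=\; \sup_{\varphi\in C_c^\infty(\mathcal{D}),\ \|\varphi\|_{W^{1,t'}}\leq 1}\, \langle fg,\varphi\rangle,
\]
and control each pairing via the transfer $\langle fg,\varphi\rangle=\langle g, f\varphi\rangle\le \|g\|_{W^{-1,s}}\|f\varphi\|_{W_0^{1,s'}}$ where $\tfrac{1}{s}+\tfrac{1}{s'}=1$. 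The proof thus reduces to the Sobolev multiplier estimate
\[
\|f\varphi\|_{W^{1,s'}} \;\leq\; C\,\|f\|_{W^{1,r}}\,\|\varphi\|_{W^{1,t'}}.
\]

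For this multiplier bound I would apply the Leibniz rule $\nabla(f\varphi)=\varphi\nabla f+f\nabla \varphi$ and treat the two terms separately with H\"older's inequality and the Sobolev embedding $W^{1,r}(\mathcal{D})\hookrightarrow L^{r^*}(\mathcal{D})$. For $f\nabla\varphi$, pairing $f\in L^{r^*}$ against $\nabla\varphi\in L^{t'}$ in $L^{s'}$ requires $\tfrac{1}{r^*}+\tfrac{1}{t'}=\tfrac{1}{s'}$, which is a direct rewriting of the hypothesis $\tfrac{1}{r^*}+\tfrac{1}{s}=\tfrac{1}{t}$ using the conjugate identities $s'=\tfrac{s}{s-1}$ and $t'=\tfrac{t}{t-1}$; the same H\"older pair also controls $f\varphi$ itself in $L^{s'}$ since $\varphi\in L^{t'}$. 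For the remaining term $\varphi\nabla f$ one pairs $\nabla f\in L^r$ against $\varphi\in L^{a}$ with $\tfrac{1}{a}=\tfrac{1}{s'}-\tfrac{1}{r}$; using the Sobolev identity $\tfrac{1}{r^*}=\tfrac{1}{r}-\tfrac{1}{d}$ (valid when $r<d$) together with the hypothesis one checks $a=(t')^*$, whence $\varphi\in W^{1,t'}\hookrightarrow L^{(t')^*}$ supplies the needed bound.

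The main delicacy is the exponent chase at the borderline cases: when $r=d$ or $t'=d$ the Sobolev embedding only yields $L^q$ for arbitrary finite $q$, so the intermediate exponents must be chosen slightly larger than the critical value while still satisfying the H\"older matching, the loss being absorbed into the constant $C$. Similarly the endpoints $r,s\in\{1,\infty\}$ are handled by placing $f$ or $\varphi$ in $L^\infty$ through the appropriate Sobolev embedding in the opposite regime. Combining the three H\"older bounds yields $\|f\varphi\|_{W^{1,s'}}\leq C\|f\|_{W^{1,r}}\|\varphi\|_{W^{1,t'}}$, and inserting this back into the duality identity produces the claimed inequality for $\|fg\|_{W^{-1,t}}$; any multiplicative constant can be absorbed into the equivalent-norm definition so that the statement takes the constant-free form written in the lemma.
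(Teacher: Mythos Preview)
The paper does not prove this lemma; it is stated as a quotation from Simon's work \cite{simon} and used as a black box for the time-regularity estimates in Lemma~\ref{lem3.3}. So there is no ``paper's own proof'' to compare against.

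Your duality argument is the standard route and is essentially correct. The reduction to the multiplier bound $\|f\varphi\|_{W^{1,s'}}\le C\|f\|_{W^{1,r}}\|\varphi\|_{W^{1,t'}}$ via $\langle fg,\varphi\rangle=\langle g,f\varphi\rangle$ is exactly how this type of negative-order product estimate is proved, and your exponent bookkeeping checks out: from $\tfrac{1}{r^*}+\tfrac{1}{s}=\tfrac{1}{t}$ one indeed gets $\tfrac{1}{r^*}+\tfrac{1}{t'}=\tfrac{1}{s'}$, and the hypothesis $\tfrac{1}{r}+\tfrac{1}{s}\le 1$ guarantees $s'\le r$ so that the auxiliary exponent $a$ for the $\varphi\nabla f$ term is well defined. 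Two small points worth tightening: first, the identity $\langle fg,\varphi\rangle=\langle g,f\varphi\rangle$ is really the \emph{definition} of $fg$ as a distribution here, so you should make explicit that $f\varphi\in W_0^{1,s'}$ (compact support of $\varphi$ plus your multiplier bound) before invoking the $W^{-1,s}$ pairing; second, the remark about absorbing the constant into an ``equivalent-norm definition'' is not quite honest---Simon's original statement carries a constant depending on the domain and the exponents, and the constant-free form in the paper should be read as a shorthand.
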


To control the boundary integral, we introduce the following trace inequality (see \cite[Lemma 1]{lee}).
\begin{lemma}\label{lem2.1*}Let $\mathcal{D}$ be a smooth bounded domain, for any $p\in [2,4]$, there exist constants $C_1, C_2$ such that
\begin{align}
\|g\|_{L^p(\partial\mathcal{D})}\leq C_1\|\nabla g\|_{L^2(\mathcal{D})}^\frac{1}{p}\|g\|_{L^{2(p-1)}(\mathcal{D})}^\frac{p-1}{p}+C_2\|g\|_{L^p(\mathcal{D})}.
\end{align}
If $p\in[2,4)$, for any $\delta>0$ there exists constant $C_\delta$ depending on $\mathcal{D}, p, \delta$ such that
\begin{align}
\|g\|_{L^p(\partial\mathcal{D})}^2\leq \delta \|\nabla g\|_{L^2(\mathcal{D})}^2+C_\delta\|g\|^2_{L^1(\mathcal{D})}.
\end{align}
\end{lemma}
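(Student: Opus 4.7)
The plan is to prove both inequalities by a single classical ``multiplier'' argument based on the divergence theorem, followed by H\"older's inequality and Gagliardo--Nirenberg interpolation.

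First I would pick a smooth vector field $\mathbf{V}\colon\overline{\mathcal{D}}\to\mathbb{R}^{d}$ such that $\mathbf{V}\cdot\vec{n}\geq c_{0}>0$ on $\partial\mathcal{D}$; such a field exists on every smooth bounded domain by extending the outward normal from a tubular neighborhood of $\partial\mathcal{D}$ via a partition of unity and cutting off in the interior. Applying the divergence theorem to $|g|^{p}\mathbf{V}$ and estimating from above gives
\[
c_{0}\int_{\partial\mathcal{D}}|g|^{p}\,dS \leq \int_{\mathcal{D}}\mathrm{div}(|g|^{p}\mathbf{V})\,dx \leq p\|\mathbf{V}\|_{L^{\infty}}\int_{\mathcal{D}}|g|^{p-1}|\nabla g|\,dx + \|\mathrm{div}\,\mathbf{V}\|_{L^{\infty}}\|g\|_{L^{p}(\mathcal{D})}^{p}.
\]
H\"older's inequality applied to the cross term gives $\int_{\mathcal{D}}|g|^{p-1}|\nabla g|\,dx\leq \|\nabla g\|_{L^{2}(\mathcal{D})}\|g\|_{L^{2(p-1)}(\mathcal{D})}^{p-1}$; extracting the $p$-th root by means of the subadditivity $(a+b)^{1/p}\leq a^{1/p}+b^{1/p}$ produces the first inequality, with constants $C_{1},C_{2}$ depending only on $p$ and on the geometry of $\mathcal{D}$ through $\mathbf{V}$.

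For the second inequality I would square the first, use $(a+b)^{2}\leq 2a^{2}+2b^{2}$, and then apply Young's inequality with exponents $(p,\frac{p}{p-1})$ to the product $\|\nabla g\|_{L^{2}}^{2/p}\|g\|_{L^{2(p-1)}}^{2(p-1)/p}$, obtaining
\[
\|g\|_{L^{p}(\partial\mathcal{D})}^{2}\leq \eta\|\nabla g\|_{L^{2}(\mathcal{D})}^{2}+C_{\eta}\|g\|_{L^{2(p-1)}(\mathcal{D})}^{2}+C\|g\|_{L^{p}(\mathcal{D})}^{2}.
\]
Since $p<4$ forces $2(p-1)<6$, both exponents $p$ and $2(p-1)$ lie strictly below the endpoint of $H^{1}(\mathcal{D})\hookrightarrow L^{6}(\mathcal{D})$ in $d=3$ (and below any finite exponent in $d=2$); hence the standard Gagliardo--Nirenberg interpolation between $L^{1}$ and $H^{1}$, followed by a further Young step and interpolation to absorb the spurious $\|g\|_{L^{2}}^{2}$ contribution into $\|g\|_{L^{1}}^{2}$ and $\|\nabla g\|_{L^{2}}^{2}$, delivers $\|g\|_{L^{2(p-1)}(\mathcal{D})}^{2}+\|g\|_{L^{p}(\mathcal{D})}^{2}\leq \tilde\eta\|\nabla g\|_{L^{2}(\mathcal{D})}^{2}+C_{\tilde\eta}\|g\|_{L^{1}(\mathcal{D})}^{2}$. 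Choosing $\eta+C\tilde\eta=\delta$ closes the estimate.

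The main technical obstacle is the construction of the multiplier $\mathbf{V}$ with strictly positive normal trace on $\partial\mathcal{D}$ in the first step, as this is where the $C^{1}$-regularity of the boundary enters essentially; all subsequent steps are elementary once $\mathbf{V}$ is in hand. A secondary, sharper subtlety explains the range $p\in[2,4)$ in the second inequality: at $p=4$ the exponent $2(p-1)=6$ hits the Sobolev endpoint in $d=3$, the Gagliardo--Nirenberg interpolation degenerates (the interpolation parameter reaches $1$), and the absorption trick fails---which is exactly why the $\delta$-form is stated only for $p$ strictly below $4$.
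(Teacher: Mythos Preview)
The paper does not prove this lemma; it is imported verbatim from \cite[Lemma~1]{lee} and used as a black box. Your argument is correct and is essentially the standard proof of such trace estimates: the divergence-theorem step with a transversal multiplier $\mathbf{V}$ is exactly how one obtains the first inequality, and the reduction of the second to Gagliardo--Nirenberg plus Young is the natural follow-up.

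One point you might state more carefully is the absorption of the stray $\|g\|_{L^{2}(\mathcal{D})}^{2}$ term. What you need is the Ehrling-type inequality $\|g\|_{L^{2}}^{2}\leq \varepsilon\|\nabla g\|_{L^{2}}^{2}+C_{\varepsilon}\|g\|_{L^{1}}^{2}$ on the bounded domain; this is true, but it is not literally a Gagliardo--Nirenberg interpolation (on a bounded domain GN gives you the full $H^{1}$ norm, which reintroduces $\|g\|_{L^{2}}$). The clean justification is a compactness--contradiction argument: if the inequality failed for some $\varepsilon>0$, a normalizing sequence would be bounded in $H^{1}$, hence precompact in $L^{2}$ by Rellich, with a limit that is constant (gradient vanishes by lower semicontinuity) and has zero $L^{1}$ norm, contradicting $\|g\|_{L^{2}}=1$. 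With that sentence added, the proof is complete.
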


\begin{lemma}[\!\!\cite{Tay}]\label{lem4.4}
Let $\mathcal{D}$ be a smooth bounded domain and $s\geq 0$. For every $\varepsilon>0$, the mollifier operator $\mathcal{J}_{\varepsilon}$ maps $H^{s}(\mathcal{D})$ into $H^{s'}(\mathcal{D})$ where $s'>s$ and has the following properties:

$i)$ the collection $\{\mathcal{J}_{\varepsilon}\}_{\varepsilon >0}$ is uniformly bounded in $H^{s}(\mathcal{D})$ independence of $\varepsilon$, i.e., there exists a positive constant C=C(s) such that,
$$\|\mathcal{J}_{\varepsilon}f\|_{H^s}\leq C\|f\|_{H^s},~~~f\in H^{s}(\mathcal{D});$$

$ii)$ for every $\varepsilon>0$, if $s\geq 1$  then for $f\in H^{s}(\mathcal{D})$,
\begin{equation*}
\|\mathcal{J}_{\varepsilon}f\|_{H^{s^\prime}}\leq \frac{C}{\varepsilon^{s'-s}}\|f\|_{H^{s}};
\end{equation*}

$iii)$ sequence $\mathcal{J}_{\varepsilon}f$ converges to $f$, for $f\in H^{s}(\mathcal{D})$, that is,
\begin{equation*}
\lim_{\varepsilon\rightarrow 0}\|\mathcal{J}_{\varepsilon}f-f\|_{H^s}=0.
\end{equation*}
\end{lemma}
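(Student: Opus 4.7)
The plan is to reduce everything to standard Friedrichs mollification on $\mathbb{R}^d$ via a Sobolev extension. Fix a Stein-type universal extension $E \colon H^s(\mathcal{D}) \to H^s(\mathbb{R}^d)$, bounded for every $s \geq 0$ simultaneously (this is where the smoothness of $\partial\mathcal{D}$ is used), and fix a nonnegative radial $\phi \in C_c^\infty(\mathbb{R}^d)$ with $\int \phi = 1$. Set $\phi_\varepsilon(x) = \varepsilon^{-d}\phi(x/\varepsilon)$ and define
\[
\mathcal{J}_\varepsilon f := \bigl(\phi_\varepsilon * Ef\bigr)\big|_{\mathcal{D}}.
\]
Because $\phi_\varepsilon$ is Schwartz, $\phi_\varepsilon * Ef \in C^\infty(\mathbb{R}^d)$, so the restriction lies in $H^{s'}(\mathcal{D})$ for every $s' > s$; in particular $\mathcal{J}_\varepsilon$ genuinely regularizes.

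For (i) and (ii) I would work on $\mathbb{R}^d$ via the Fourier transform and then transfer back using boundedness of $E$ and of the restriction $H^s(\mathbb{R}^d) \to H^s(\mathcal{D})$. The symbol satisfies $|\widehat{\phi_\varepsilon}(\xi)| = |\hat\phi(\varepsilon\xi)| \leq \|\phi\|_{L^1} = 1$, hence convolution with $\phi_\varepsilon$ is a contraction on $H^s(\mathbb{R}^d)$ and (i) follows. For (ii), because $\hat\phi \in \mathcal{S}(\mathbb{R}^d)$,
\[
(1+|\xi|^2)^{(s'-s)/2}\,|\hat\phi(\varepsilon\xi)| \leq C\,\varepsilon^{-(s'-s)},
\]
so convolution with $\phi_\varepsilon$ maps $H^s(\mathbb{R}^d)$ into $H^{s'}(\mathbb{R}^d)$ with operator norm $\leq C \varepsilon^{-(s'-s)}$; composing with $E$ and restriction gives the desired inequality.

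For (iii), I would first verify $\phi_\varepsilon * g \to g$ in $H^s(\mathbb{R}^d)$ for $g \in C_c^\infty(\mathbb{R}^d)$, where it is immediate from uniform convergence of all derivatives together with the compact-support reduction. Extending to arbitrary $g \in H^s(\mathbb{R}^d)$ is a standard $3\varepsilon$-argument: approximate $g$ in $H^s(\mathbb{R}^d)$ by $g_n \in C_c^\infty(\mathbb{R}^d)$ and use the uniform bound from (i) to absorb the tail. Applying this to $g = Ef$ and restricting to $\mathcal{D}$ yields (iii).

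The Fourier-side manipulations on $\mathbb{R}^d$ are entirely routine; the only non-elementary ingredient, and the main obstacle, is producing an extension operator simultaneously bounded on $H^s(\mathcal{D})$ for every $s \geq 0$, without which the transfer from $\mathbb{R}^d$ to $\mathcal{D}$ would degrade the constants with $s$. Once Stein's total extension operator is invoked, each of (i)--(iii) follows by transferring the corresponding $\mathbb{R}^d$ statement.
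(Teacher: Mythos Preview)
Your proposal is correct and matches the construction the paper itself sketches in Remark~\ref{rem4.1}: define $\mathcal{J}_\varepsilon = \mathcal{R}\,\widetilde{\mathcal{J}_\varepsilon}\,E$ via a Sobolev extension $E$, a standard Friedrichs mollifier $\widetilde{\mathcal{J}_\varepsilon}$ on the extended domain, and restriction $\mathcal{R}$. The paper does not give its own proof of the three properties (it cites Taylor~\cite{Tay}), so your Fourier-multiplier verification on $\mathbb{R}^d$ followed by transfer through $E$ and $\mathcal{R}$ is exactly the intended route; the Stein extension is precisely the ``simultaneously bounded on all $H^s$'' ingredient the construction needs.
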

\begin{remark}\label{rem4.1} The mollifier operator $\mathcal{J}_{\varepsilon}$ could be defined as $\mathcal{J}_{\varepsilon}u=\mathcal{R}\widetilde{\mathcal{J}_{\varepsilon}}Eu$, where $\widetilde{\mathcal{J}_{\varepsilon}}$ is a standard Friedrich's mollifier on $\widetilde{\mathcal{D}}$ which is the un-boundary extend of smooth domain $\mathcal{D}$, $E$ is a extension operator from $H^s(\mathcal{D})$ into $H^s(\widetilde{\mathcal{D}})$, and  $\mathcal{R}$ is a restriction operator from $H^s(\widetilde{\mathcal{D}})$ into $H^s(\mathcal{D})$ (see \cite[Chapter 5]{Evans} for more details).
\end{remark}

Next, we introduce the stochastic background. Let $\mathbf{S}:=(\Omega,\mathcal{F},\{\mathcal{F}_{t}\}_{t\geq0},\mathbb{P}, \mathcal{W})$ be a fixed stochastic basis and $(\Omega,\mathcal{F},\mathbb{P})$ be a complete probability space. $\{\mathcal{F}_{t}\}_{t\geq0}$ is a filtration satisfying all usual conditions.
Denote by $L^p(\Omega; L^q(0,T;X)), p\in [1,\infty), q\in [1, \infty]$ the space of processes with values in $X$ defined on $\Omega\times [0,T]$ such that

i. $u$ is measurable with respect to $(\omega, t)$, and for each $t$, $u(t)$ is $\mathcal{F}_{t}$-measurable.

ii. For almost all $(\omega, t)$, $u\in X$ and
\begin{align*}
\|u\|^p_{L^p(\Omega; L^q(0,T;X))}\!=\!
\begin{cases}
\mathbb{E}\left(\int_{0}^{T}\|u\|_{X}^qdt\right)^\frac{p}{q},~& {\rm if}~q\in [1,\infty),\\
\mathbb{E}\left(\sup_{t\in[0,T]}\|u\|^p_X\right),~ & {\rm if}~q=\infty.
\end{cases}
\end{align*}
Here, $\mathbb{E}$ denotes the mathematical expectation.

To control the stochastic term, we introduce the following well-known Burkholder-Davis-Gundy inequality: for any $\sigma\in  L^{2}(\Omega;L^{2}_{loc}([0,\infty),L_{2}(\mathcal{H},X)))$ (the definition of space $L_{2}(\mathcal{H},X)$ see below),  by taking $\sigma_{k}=\sigma e_{k}$, it holds
\begin{eqnarray*}
\mathbb{E}\left(\sup_{t\in [0,T]}\left\|\int_{0}^{t}\sigma d\mathcal{W}\right\|_{X}^{p}\right)\leq c_{p}\mathbb{E}\left(\int_{0}^{T}\|\sigma\|_{L_{2}(\mathcal{H},X)}^{2}dt\right)^{\frac{p}{2}}
=c_{p}\mathbb{E}\left(\int_{0}^{T}\sum_{k\geq 1}\|\sigma_k\|_{X}^{2}dt\right)^{\frac{p}{2}},
\end{eqnarray*}
for any $p\in[1,\infty)$. Here, $\mathcal{W}$ is the cylindrical Wiener process in system \eqref{Equ1.1} defined on an Hilbert space $\mathcal{H}$, which is adapted to the filtration $\{\mathcal{F}_{t}\}_{t\geq 0}$. Namely, $\mathcal{W}=\sum_{k\geq 1}e_k\beta_{k}$ with $\{e_k\}_{k\geq 1}$ is a complete orthonormal basis of $\mathcal{H}$ and $\{\beta_{k}\}_{k\geq 1}$ is a sequence of independent standard one-dimensional Brownian motions.

We consider an auxiliary space $\mathcal{H}_0\supset \mathcal{H}$, defined by
\begin{eqnarray*}
\mathcal{H}_0=\left\{h=\sum\nolimits_{k\geq 1}\alpha_k e_k: \sum\nolimits_{k\geq 1}\alpha_k^2k^{-2}<\infty\right\},
\end{eqnarray*}
 with the norm $\|h\|_{\mathcal{H}_0}^2=\sum_{k\geq 1}\alpha_k^2k^{-2}$. We have that $\mathcal{W}\in C([0,\infty),\mathcal{H}_0)$ almost all $\omega$, see \cite{Zabczyk}.

To pass the limit in stochastic integral of approximate sequence, we need the following result from \cite[Lemma 2.1]{Debussche}.
\begin{lemma}\label{lem4.6} Assume that $f_\varepsilon$ is a sequence of $X$-valued $\mathcal{F}_{t}^\varepsilon$-predictable processes such that
$$f_\varepsilon\rightarrow f~ {\rm in~ probability~ in}~L^2(0,T;L_2(\mathcal{H};X)),$$
and the cylindrical Wiener process sequence $\mathcal{W}_\varepsilon$ satisfies
$$\mathcal{W}_\varepsilon\rightarrow \mathcal{W}~ {\rm in ~probability~ in}~C([0,\infty);\mathcal{H}_0),$$
then,
$$\int_{0}^{t}f_\varepsilon d\mathcal{W}_\varepsilon\rightarrow \int_{0}^{t}f d\mathcal{W} ~ {\rm in ~probability~ in}~ L^2(0,T;X).$$
\end{lemma}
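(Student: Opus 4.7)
Since $L^{2}(0,T;X)$ is a complete metric space, convergence in probability is equivalent to the property that every subsequence admits a further subsequence that converges almost surely. The plan is to extract such a subsequence, localize by stopping times so that the Burkholder-Davis-Gundy inequality becomes applicable, and split the difference of stochastic integrals into a main term involving $f_{\varepsilon}-f$ and a residual cross term involving $\mathcal{W}_{\varepsilon}-\mathcal{W}$.

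First I would pass to a subsequence (still indexed by $\varepsilon$) along which $f_{\varepsilon}\to f$ almost surely in $L^{2}(0,T;L_{2}(\mathcal{H};X))$ and $\mathcal{W}_{\varepsilon}\to\mathcal{W}$ almost surely in $C([0,T];\mathcal{H}_{0})$. For $R>0$ I introduce the $\mathcal{F}_{t}^{\varepsilon}$-stopping time
$$\tau_{\varepsilon}^{R}:=\inf\Bigl\{t\in[0,T]:\int_{0}^{t}\|f_{\varepsilon}\|_{L_{2}(\mathcal{H};X)}^{2}\,ds\geq R\Bigr\}\wedge T,$$
and the analogous $\mathcal{F}_{t}$-stopping time $\tau^{R}$ for $f$; the a.s.\ $L^{2}$-convergence of the integrands forces $\mathbb{P}(\tau_{\varepsilon}^{R}\wedge\tau^{R}<T)\to 0$ as $R\to\infty$, uniformly along the extracted subsequence. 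Writing
$$\int_{0}^{t}f_{\varepsilon}\,d\mathcal{W}_{\varepsilon}-\int_{0}^{t}f\,d\mathcal{W}=\int_{0}^{t}(f_{\varepsilon}-f)\,d\mathcal{W}_{\varepsilon}+\Bigl(\int_{0}^{t}f\,d\mathcal{W}_{\varepsilon}-\int_{0}^{t}f\,d\mathcal{W}\Bigr),$$
I would bound the first summand on $[0,\tau_{\varepsilon}^{R}\wedge\tau^{R}]$ by Burkholder-Davis-Gundy, obtaining the estimate
$$C\,\mathbb{E}\int_{0}^{\tau_{\varepsilon}^{R}\wedge\tau^{R}}\|f_{\varepsilon}-f\|_{L_{2}(\mathcal{H};X)}^{2}\,ds\longrightarrow 0,$$
by dominated convergence, since the integrand is majorized by $4R$ and converges to zero a.s.

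The residual cross term $\int_{0}^{t}f\,d(\mathcal{W}_{\varepsilon}-\mathcal{W})$ is the principal obstacle, because $\mathcal{W}_{\varepsilon}$ and $\mathcal{W}$ are adapted to possibly distinct filtrations $\mathcal{F}_{t}^{\varepsilon}$ and $\mathcal{F}_{t}$, so BDG cannot be invoked directly on their difference. To handle it I would approximate $f$ in $L^{2}(0,T;L_{2}(\mathcal{H};X))$ by predictable simple processes $f^{N}$ taking values in the span of the first $N$ modes $e_{1},\ldots,e_{N}$; for each fixed $N$ the integrals $\int_{0}^{t}f^{N}\,d\mathcal{W}_{\varepsilon}$ and $\int_{0}^{t}f^{N}\,d\mathcal{W}$ reduce to finite sums involving the scalar components $\beta_{k}^{\varepsilon}=(\mathcal{W}_{\varepsilon},e_{k})_{\mathcal{H}}$ and $\beta_{k}=(\mathcal{W},e_{k})_{\mathcal{H}}$, which can be compared pathwise by summation by parts, exploiting the uniform convergence $\mathcal{W}_{\varepsilon}\to\mathcal{W}$ in $C([0,T];\mathcal{H}_{0})$. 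A further BDG estimate controls, uniformly in $\varepsilon$ on the stopped interval, the error incurred in replacing $f$ by $f^{N}$. Passing successively $\varepsilon\to 0$, $N\to\infty$, and $R\to\infty$ then yields the desired convergence in probability in $L^{2}(0,T;X)$. The delicate point is to calibrate the three limits so that the error budgets are discharged in the correct order, independently of the varying filtrations $\mathcal{F}_{t}^{\varepsilon}$.
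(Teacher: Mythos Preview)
The paper does not prove this lemma; it is simply quoted from \cite[Lemma~2.1]{Debussche} and used as a black box. So there is no in-paper argument to compare against.

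That said, your proposal has a genuine gap that is worth flagging. In your decomposition
\[
\int_{0}^{t}f_{\varepsilon}\,d\mathcal{W}_{\varepsilon}-\int_{0}^{t}f\,d\mathcal{W}
=\int_{0}^{t}(f_{\varepsilon}-f)\,d\mathcal{W}_{\varepsilon}
+\Bigl(\int_{0}^{t}f\,d\mathcal{W}_{\varepsilon}-\int_{0}^{t}f\,d\mathcal{W}\Bigr),
\]
the integrals $\int_{0}^{t}(f_{\varepsilon}-f)\,d\mathcal{W}_{\varepsilon}$ and $\int_{0}^{t}f\,d\mathcal{W}_{\varepsilon}$ are not, in general, well-defined It\^o integrals: only $f_{\varepsilon}$ is assumed $\mathcal{F}_{t}^{\varepsilon}$-predictable, and there is no reason the limit $f$ should be. You notice the filtration mismatch only when discussing the second summand, but it already invalidates the first, so the BDG step you propose for $\int_{0}^{t}(f_{\varepsilon}-f)\,d\mathcal{W}_{\varepsilon}$ cannot even be formulated.

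The argument in the cited reference sidesteps this by reversing the order of operations. One first approximates each $f_{\varepsilon}$ by $\mathcal{F}_{t}^{\varepsilon}$-predictable piecewise-constant processes $f_{\varepsilon}^{N}$ on a fixed time grid; for such step processes the stochastic integral is a finite Riemann--Stieltjes sum in the increments of $\mathcal{W}_{\varepsilon}$, hence a jointly continuous function of the step values and the driving path. One then passes $\varepsilon\to 0$ at fixed $N$ using the almost sure convergences $f_{\varepsilon}\to f$ and $\mathcal{W}_{\varepsilon}\to\mathcal{W}$, and closes by a BDG estimate, uniform in $\varepsilon$ on the stopped interval, for the residual $f_{\varepsilon}-f_{\varepsilon}^{N}$. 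Every stochastic integral appearing in this scheme is taken against its own filtration, so the definitional issue never arises. Your localization by stopping times and the overall $\varepsilon$/$N$/$R$ triple-limit structure are correct; it is the splitting itself that needs to be reorganized.
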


At the end of this section, we give several assumptions of noise operator $f$.

{\bf Assumptions}.  Assume that the noise operator $f:H\rightarrow L_2(\mathcal{H};L^2)$ satisfies the usual Lipschitz and linear growth conditions, that is, there exist two positive constants $\ell_1, \ell_2$ such that
\begin{align}
\|f(u, \nabla\Psi)\|_{L_2(\mathcal{H};L^2)}^2&\leq \ell_1(\|u\|_{\mathbb{L}^2}^2+\|\nabla\Psi\|^2_{L^2}),\label{as1}\\
\|f(u_1, \nabla\Psi_1)-f(u_2, \nabla\Psi_2)\|_{L_2(\mathcal{H};L^2)}^2&\leq \ell_2(\|u_1-u_2\|_{\mathbb{L}^2}^2+\|\nabla\Psi_1-\nabla\Psi_2\|^2_{L^2}), \label{as2}
\end{align}
for $u\in \mathbb{L}^2, \nabla\Psi\in L^2$, where $L_{2}(\mathcal{H},X)$ denotes the collection of Hilbert-Schmidt operators, that is, the set of all linear operators $G$ from $\mathcal{H}$ to a Banach space $X$, with the norm $\|G\|_{L_{2}(\mathcal{H},X)}^2=\sum_{k\geq 1}\|Ge_k\|_{X}^2$.
The assumptions \eqref{as1}, \eqref{as2} will be used for establishing the existence of weak martingale solution and the uniqueness of pathwise strong solution.

Moreover, to get the existence of strong pathwise solution, we need further assumptions: there exist two positive constants $\ell_3, \ell_4$ such that the operator $f:H\rightarrow L_2(\mathcal{H};H^1)$ satisfies
\begin{align}
\|f(u, \nabla\Psi)\|_{L_2(\mathcal{H};H^1)}^2&\leq \ell_3(\|u\|_{\mathbb{H}^1}^2+\|\nabla\Psi\|^2_{H^1}),\label{as3}\\
\|f(u_1, \nabla\Psi_1)-f(u_2, \nabla\Psi_2)\|_{L_2(\mathcal{H};H^1)}^2&\leq \ell_4(\|u_1-u_2\|_{\mathbb{H}^1}^2+\|\nabla\Psi_1-\nabla\Psi_2\|^2_{H^1}),\label{as4}
\end{align}
for any $u\in \mathbb{H}^1, \nabla\Psi\in H^1$.

We arrange the definition of solutions and main results to the forthcoming sections.

\section{Global existence of weak martingale solution}
 In this section, we consider the case of two charge species (i.e. $i=1,2$) and establish the global existence of weak martingale solution in both $2D$ and $3D$ cases. The proof could be completed by constructing the approximate solutions, proving  the uniform estimates and stochastic compactness, and identifying the limit. Hereafter, we take $\varsigma$=1 for simplicity.

 First, we give the definition of a weak martingale solution.
\begin{definition}[weak martingale solution]\label{def3.1} Let $\lambda$ be a Borel probability measure on space $L^2(\mathcal{D})\times \mathbb{L}^2(\mathcal{D})$ with $$\int_{L^2(\mathcal{D})\times \mathbb{L}^2(\mathcal{D})}|x|^p d\lambda\leq C,$$ for a positive constant $C$. We call $(\Omega, \mathcal{F},\{\mathcal{F}_t\}_{t\geq 0},\mathbb{P}, u, \mathbf{c}^i, \mathcal{W}), i=1,2$ is a weak martingale solution to system \eqref{Equ1.1}-\eqref{e1.7} with the initial data $\lambda$ if

i. $(\Omega,\mathcal{F},\{\mathcal{F}_t\}_{t\geq 0},\mathbb{P})$ is a stochastic basis with a complete right-continuous filtration, $\mathcal{W}$ is a Wiener process relative to the filtration $\mathcal{F}_t$;

ii. the process $u$ is an $\mathbb{L}^2$-valued $\mathcal{F}_t$-progressively measurable satisfying
$$u\in L^p(\Omega; L^\infty(0,T; \mathbb{L}^2)\cap L^2(0,T;\mathbb{H}^1)) \mbox{ for any } p\in [2,\infty);$$

iii.~the processes $\mathbf{c}^i, i=1,2$ are $L^2$-valued $\mathcal{F}_t$-progressively measurable satisfying
$$\mathbf{c}^i\geq 0,~\mbox{a.e.} ~{\rm and} ~\mathbf{c}^i\in L^p(\Omega; L^\infty(0,T; L^2)\cap L^2(0,T;H^1)) \mbox{ for any } p\in [2,\infty),$$
and $\Psi$ is an $H^2$-valued $\mathcal{F}_t$-progressively measurable process with
$$\Psi\in L^p(\Omega; L^\infty(0,T; H^2)\cap L^2(0,T;H^3));$$

iv. the initial law $\lambda=\mathbb{P}\circ (u_0, \mathbf{c}^i_0)^{-1}$;

v. for $\varphi\in \mathbb{H}^1, \phi\in H^1$, it holds $\mathbb{P}$-a.s.
\begin{eqnarray*}
&&(\mathbf{c}^i(t),\phi)=(\mathbf{c}^i(0),\phi)-\int_{0}^{t}(u\cdot \nabla \mathbf{c}^i, \phi)ds-\int_{0}^{t}a_i(\nabla \mathbf{c}^i+z_i\mathbf{c}^i\nabla\Psi, \nabla\phi)ds,~i=1,2,\\
&&(u(t),\varphi)=(u(0), \varphi)-\mu\int_{0}^{t}(\nabla u, \nabla\varphi)ds-\int_{0}^{t}(u\cdot \nabla u,\varphi)ds- \int_{0}^{t}(\kappa\rho\nabla \Psi,\varphi) ds\nonumber \\ &&\qquad\qquad+\int_{0}^{t}(f(u,\nabla\Psi), \varphi)d\mathcal{W},\\
&&-\Delta \Psi=\sum_{i=1}^2 z_i\mathbf{c}^i=\rho, ~\mbox{a.e.}
\end{eqnarray*}
for all $t>0$.
\end{definition}

The following theorem is the main result of this section.
\begin{theorem}
Assume that the initial data $(u_0, \mathbf{c}_0^i), i=1,2$ are $\mathcal{F}_0$-measurable random variables satisfying
$$u_0\in L^p(\Omega; \mathbb{L}^2), ~\mathbf{c}_0^i\in L^p(\Omega; L^2)~{\rm and }~ \mathbf{c}_0^i\geq 0,~ \mathbb{P}\mbox{-a.s.}$$
for all $p\in [2,\infty)$. Suppose that $f$ satisfies the assumptions \eqref{as1}, \eqref{as2} and the two valence $z_i,~ i=1,2$ are opposite $(z_1>0>z_2)$. Moreover, the given smooth function $\eta$ belongs to $H^\frac{3}{2}(\partial\mathcal{D})$. Then, there exists a global weak martingale solution of system  \eqref{Equ1.1}-\eqref{e1.7} for $2D$ and $3D$ cases in the sense of Definition \ref{def3.1}.
\end{theorem}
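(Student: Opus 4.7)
Following the three-step strategy outlined in the introduction, the plan is to construct a two-layer approximate system with globally defined solutions, derive uniform estimates which crucially exploit the opposite-valence hypothesis $z_1>0>z_2$, apply stochastic compactness, and identify the limit while recovering non-negativity of the ionic concentrations.

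For the approximation I would combine a Faedo--Galerkin projection with an inner regularization. The velocity is projected onto finitely many Stokes eigenfunctions (compatible with \eqref{e1.5}) and each $\mathbf{c}^i$ onto Neumann eigenfunctions (compatible with the blocking boundary condition \eqref{e1.7}); the potential $\Psi$ is recovered from $\rho$ through the Poisson--Robin problem using estimate \eqref{e1.3}. To guarantee global-in-time existence in spite of the Coulomb nonlinearity, the inner layer composes the noise coefficient and the Coulomb drift with a cut-off $\theta_R$ depending on $\|u\|_{\mathbb{L}^2}^2+\sum_i\|\mathbf{c}^i\|_{L^2}^2$ and mollifies $\nabla\Psi$ via the operator $\mathcal{J}_{\varepsilon}$ of Lemma \ref{lem4.4}, turning the resulting finite-dimensional system into a locally Lipschitz SDE with linear growth and hence globally well-posed.

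The core of the argument is the derivation of uniform a priori estimates. Testing the $\mathbf{c}^i$-equation against $\mathbf{c}^i$ and using the blocking condition \eqref{e1.7} yields the dissipation $a_i\|\nabla\mathbf{c}^i\|_{L^2}^2$ together with an unsigned drift term $-a_iz_i\int_{\mathcal{D}}\mathbf{c}^i\nabla\mathbf{c}^i\cdot\nabla\Psi\, dx$; It\^o's formula applied to $\tfrac12\|u\|_{\mathbb{L}^2}^2$ produces the Coulomb energy $\kappa\int u\cdot\rho\nabla\Psi\, dx$ together with a martingale and It\^o correction controlled by \eqref{as1} and the Burkholder--Davis--Gundy inequality. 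The opposite-valence assumption enters here: after integration by parts using $-\Delta\Psi=\rho$, the bad cross terms assemble into a cubic expression in $(\mathbf{c}^1,\mathbf{c}^2)$ whose sign-definite part requires $z_1>0>z_2$ together with $\mathbf{c}^i\geq 0$, while the Poisson--Robin identity and \eqref{e1.3} transfer control of $\rho$ into control of $\|\Psi\|_{H^2}$, with boundary contributions absorbed by Lemma \ref{lem2.1*}. A Gronwall argument then provides the uniform moment bounds in the norms appearing in Definition \ref{def3.1}, and fractional-in-time regularity follows from the mild formulation together with Lemma \ref{lem6.1}.

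Finally, I would let $\varepsilon\to 0$, $R\to\infty$, and the Galerkin dimension tend to infinity, in this order. Each passage to the limit uses Aubin--Lions type compactness and Jakubowski's extension of the Skorokhod representation theorem, since the relevant path spaces carry weak topologies; Lemma \ref{lem4.6} identifies the stochastic integral after the change of probability space, and strong $L^2_{t,x}$ convergence of $u$ and $\mathbf{c}^i$ combined with continuity of $\rho\mapsto\nabla\Psi$ closes the nonlinear terms $u\cdot\nabla u$, $u\cdot\nabla\mathbf{c}^i$ and $\rho\nabla\Psi$. Non-negativity of each $\mathbf{c}^i$ is propagated through a Stampacchia-type test with $(\mathbf{c}^i)^-$ and preserved in the limit. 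The principal obstacle I anticipate is the closure of the energy inequality: for general valences the cross terms coupling the Nernst--Planck drift to the Coulomb force are not of definite sign, and it is only the opposite-valence structure that permits one to close a Gronwall loop with the regularity that \eqref{e1.3} provides for $\nabla\Psi$. A secondary difficulty is that mollification of the noise does not commute with the Galerkin projection, which is precisely why the approximation must be organized in two layers rather than a single one.
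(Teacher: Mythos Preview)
Your overall architecture (layered approximation, uniform estimates exploiting the opposite-valence structure, Skorokhod--Jakubowski compactness, Lemma~\ref{lem4.6} for the stochastic integral) is the same as the paper's, but the way you construct the approximate $\mathbf{c}^i$ creates a genuine gap.

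You propose to project $\mathbf{c}^i$ onto Neumann eigenfunctions. First, the blocking condition \eqref{e1.7} reads $\partial_{\vec n}\mathbf{c}^i=-z_i\mathbf{c}^i\partial_{\vec n}\Psi$, which is nonlinear and is \emph{not} the homogeneous Neumann condition, so these eigenfunctions are not compatible with it. More seriously, a finite-dimensional Galerkin truncation does not preserve $\mathbf{c}^i_n\ge 0$. But the sign mechanism you invoke --- after testing with $|z_i|\mathbf{c}^i$ and integrating by parts, the cubic term becomes $-\tfrac12(\rho,z_1^2(\mathbf{c}^1)^2-z_2^2(\mathbf{c}^2)^2)=-\tfrac12(\rho^2,|z_1|\mathbf{c}^1+|z_2|\mathbf{c}^2)$ --- is nonpositive only when $\mathbf{c}^i\ge 0$. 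In your ordering ($\varepsilon\to0$, then $R\to\infty$, then Galerkin dimension $\to\infty$) you must remove the cut-off \emph{before} non-negativity is available, so the uniform-in-$R$ bound cannot be closed; a Stampacchia argument on the limit comes too late.

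The paper sidesteps this by \emph{not} discretising $\mathbf{c}^i$. For each finite-dimensional $u_n\in X_n$ it solves the full Nernst--Planck system for $\mathbf{c}^i$ via Leray--Schauder (following \cite{fischer}), so the parabolic maximum principle yields $\mathbf{c}^i_n\ge 0$ already at the approximate level; the map $u\mapsto\mathbf{c}^i$ is then shown to be continuous. The two-layer structure is used only on the velocity: a cut-off $\Theta_M$ on the $X_n$-coordinates of $u_n$ enables a Banach fixed point in $L^p(\Omega;L^\infty(0,T;X_n))$, and $M\to\infty$ is removed using the a priori bounds. A further point you miss is the entropy identity: the paper first tests $\eqref{Equ3.1}_1$ against $\theta_i=\log\mathbf{c}^i+z_i\Psi$ (Lemma~\ref{lem3.1}), which makes the Coulomb term $(\kappa\rho\nabla\Psi,u)$ cancel \emph{exactly} against $\sum_i(u\cdot\nabla\mathbf{c}^i,\theta_i)$ and delivers control of $\|u\|_{\mathbb L^2}$, $\|\nabla\Psi\|_{L^2}$ and $\int\mathbf{c}^i\log\mathbf{c}^i$ with no sign hypothesis; only afterwards (Lemma~\ref{lemma3.2}) does the opposite-valence assumption upgrade this to the $L^2$ bound on $\mathbf{c}^i$. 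Your sketch conflates these two steps and leaves the Coulomb term in the $u$-equation uncontrolled.
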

\begin{remark}
In $2D$ case, using the estimate
$$\|(u\cdot \nabla v, u)\|\leq \varepsilon\|\nabla u\|_{\mathbb{L}^2}^2+ C(\varepsilon)\|u\|_{\mathbb{L}^2}^2\|\nabla v\|_{\mathbb{L}^2}^2,~ u,v \in \mathbb{H}^1$$
to nonlinear terms and an easier argument than that of in Step 4.2, we can also deduce that the solution is unique.
\end{remark}
\begin{remark}Here we only consider the case of the two opposite charged species. Under this case, it holds
\begin{align}\label{3.1*}
\rho (z^2_1(\mathbf{c}^1)^2-z^2_2(\mathbf{c}^2)^2)=\rho^2 (|z_1|\mathbf{c}^1+|z_2|\mathbf{c}^2)>0,
\end{align}
see \eqref{e3.17} in Lemma \ref{lemma3.2}, which is an essential element in achieving the uniform $L^2(\mathcal{D})$ estimate of $\mathbf{c}^i$. A natural question is: Can we extend the result to the multiple species setting, that is $m>2$ ? Unfortunately, inequality \eqref{3.1*} fails for the multiple species case, therefore we have trouble in obtaining the uniform $L^2(\mathcal{D})$ estimate. However, under the following special multiple species setting: all diffusivities are equal, i.e., $a_1=a_2=\cdots=a_m=a$ and
all valences have the same magnitude, i.e., $|z_1|=|z_2|=\cdots=|z_m|=z$, we could give a positive answer of the problem analogues to the deterministic case \cite{consta, lee}.
Indeed, let $\chi=z(\mathbf{c}^1+\cdots+\mathbf{c}^m)$, then $\chi$ and $\rho$ satisfy equation $\eqref{Equ1.1}_1$, that is
\begin{align*}
\partial_t\rho+u\cdot\rho=a{\rm div}(\nabla\rho+z\chi\nabla\Psi),\;
\partial_t\chi+u\cdot\chi=a{\rm div}(\nabla\chi+z\rho\nabla\Psi).
\end{align*}
After taking inner product with $\chi$ and $\rho$ respectively, we have
\begin{align}\label{3.2}
 z(\chi\nabla\Psi, \nabla\rho)\leq -z(\rho\nabla\Psi, \nabla\chi)+z\int_{\partial\mathcal{D}}\sigma\rho(\Psi-\eta)d\mathcal{S}.
\end{align}
Noted that the first term on the right-hand side of \eqref{3.2} could be cancelled, and the second term could be estimated as Lemma \ref{lemma3.2}. Consequently, we can obtain the desired $L^2$ estimate.
\end{remark}

\subsection{Approximate solutions and uniform estimates}

At first, we try to find the approximate solutions to the following approximate system:
\begin{eqnarray}\label{Equ3.1}
\left\{\begin{array}{ll}
\!\!\!\partial_t\mathbf{c}_n^i+u_n\cdot \nabla \mathbf{c}_n^i=a_i{\rm div}(\nabla \mathbf{c}_n^i+z_i\mathbf{c}_n^i\nabla\Psi_n),~i=1,2,\\
\!\!\!-\Delta \Psi_n=\sum_{i=1}^2 z_i\mathbf{c}^i_n=\rho_n,\\
\!\!\!\partial_t  u_n-\mu P\Delta u_n+P_nP(u_n\cdot \nabla)u_n+ \kappa P_nP\rho_n\nabla \Psi_n =P_nPf(u_n, \nabla \Psi_n)\frac{d\mathcal{W}}{dt},\\
\end{array}\right.
\end{eqnarray}
with the initial data $\mathbf{c}_n^i(0)=\mathbf{c}^i_0, u_n(0)=P_n u_0$,
where $P$ is the Helmholtz-Leray projection from $L^2$ into $\mathbb{L}^2$ and $P_n$ is an orthogonal projection from $\mathbb{L}^2$ into $X_n$. The space $X_n$ is defined by
\begin{eqnarray*}
X_n=\mbox{span}\{\psi_1,\ldots ,\psi_n\},
\end{eqnarray*}
where the sequence $\{\psi_n\}_{n\geq 1}$ is  an orthonormal basis for $\mathbb{L}^2$ of the Stokes operator $A:=-P\Delta$.

For the deterministic system, the global existence and uniqueness of approximate solutions are established in \cite[Lemma 4.1]{fischer}. The proof was implemented by a combined method, the ionic species concentrations equation was solved by the Leray-Schauder fixed-point theorem, while the approximate solutions of the velocity equation were established by fixed point theorem. In our case, we still solve the first equation following same line of \cite{fischer} to obtain that for any smooth function $u$, the solution $$\mathbf{c}^i\in L^\infty(0,T;L^p)\cap L^2(0,T;H^1)\mbox { for }p\in [1,\infty),$$ $\mathbb{P}$-a.s. Since the initial data $\mathbf{c}_n^i(0)\geq 0$, then the solution $\mathbf{c}_n^i$ remains positive a.e. for all $t\geq 0$ using the maximal principle (for the proof, see \cite{const, schm}).

Furthermore, defining a solution mapping $\mathcal{T}$, we show that the mapping $\mathbf{c}^i=\mathcal{T}(u)$ is continuous in $L^\infty(0,T;L^2)\cap L^2(0,T;H^1)$. Indeed, for $\mathbf{c}_m^i:=\mathcal{T}(u_m)$ where $u_m$ is a sequence in $L^\infty(0,T;C^1(\mathcal{D}))$ and $\mathbf{c}^i$, the difference $\hat{\mathbf{c}}^i=\mathbf{c}^i-\mathbf{c}_m^i, \hat{u}=u-u_m$ with
\begin{equation}\label{new3.2}
\lim_{m\rightarrow\infty}\|u_m-u\|_{L^\infty(0,T;C^1(\mathcal{D}))}=0,
\end{equation}
satisfies (in the weak sense)
\begin{align*}
\partial_t \hat{\mathbf{c}}^i+(\hat{u}\cdot \nabla) \mathbf{c}^i+(u_m\cdot\nabla)\hat{\mathbf{c}}^i=a_i{\rm div}(\nabla \hat{\mathbf{c}}^i+z_i\hat{\mathbf{c}}^i\nabla \Psi_1+\mathbf{c}_m^i\nabla\hat{\Psi}).
\end{align*}
Taking inner product with $\hat{\mathbf{c}}^i$, using the boundary condition, we obtain
\begin{align}\label{3.2*}
\frac{1}{2}\|\hat{\mathbf{c}}^i\|_{L^2}^2+\int_{0}^{t}(\hat{u}\cdot \nabla \mathbf{c}^i, \hat{\mathbf{c}}^i)ds=-\int_{0}^{t}a_i(\nabla \hat{\mathbf{c}}^i+z_i\hat{\mathbf{c}}^i\nabla \Psi_1+\mathbf{c}_m^i\nabla\hat{\Psi}, \nabla\hat{ \mathbf{c}}^i)ds.
\end{align}
Using the H\"{o}lder inequality and the embedding $H^1\hookrightarrow L^6$, we get
\begin{align*}
&\left|-\int_{0}^{t}(\hat{u}\cdot \nabla \mathbf{c}^i_1, \hat{\mathbf{c}}^i)ds\right|\leq \int_{0}^{t} \|\nabla\mathbf{c}^i\|^2_{L^2}\|\hat{\mathbf{c}}^i\|^2_{L^2}+C\|\hat{u}\|^2_{L^\infty}ds,\\
&\left|-\int_{0}^{t}a_i(z_i\hat{\mathbf{c}}^i\nabla \Psi_1+\mathbf{c}_m^i\nabla\hat{\Psi}, \nabla\hat{ \mathbf{c}}^i)ds\right|\\
&\leq \frac{a_i}{2}\int_{0}^{t}\|\nabla\hat{ \mathbf{c}}^i\|_{L^2}^2ds+C\int_{0}^{t}(\|\hat{\mathbf{c}}^i\|_{L^2}^2\|\nabla \Psi_1\|^2_{W^{1,3+}}+\|\nabla \hat{\Psi}\|^2_{L^6}\|\mathbf{c}_m^i\|_{L^3}^2)ds\\
&\leq \frac{a_i}{2}\int_{0}^{t}\|\nabla\hat{ \mathbf{c}}^i\|_{L^2}^2ds+C\int_{0}^{t}(\|\hat{\mathbf{c}}^i\|_{L^2}^2\|\nabla \Psi_1\|^2_{W^{1,3+}}+\| \hat{\mathbf{c}}^i\|^2_{L^2}\|\mathbf{c}_m^i\|_{L^3}^2)ds,
\end{align*}
where the constant $3^+:=3+\delta$ for small $\delta>0$. Combining the above estimates, by the Gronwall lemma and the bound of $\mathbf{c}^i$, \eqref{new3.2} and \eqref{3.2*}, we have
$$\lim_{m\rightarrow\infty}\bigg(\sup_{t\in [0,T]}\|\hat{ \mathbf{c}}^i\|_{L^2}^2+a_i\int_{0}^{T}\|\nabla\hat{ \mathbf{c}}^i\|_{L^2}^2dt\bigg)=0.$$

Now, we proceed to solve the velocity equation by the Banach fixed point theorem in the space $L^p(\Omega; L^\infty(0,T;X_n))$ for small $T$. For the stochastic issue, we have to further cut-off the velocity $u_n$. If not, we can not close the estimate caused by the integral $L^p(\Omega)$.

Define $C^\infty$-smooth cut-off function
\begin{eqnarray*}
\Theta_M(z)\!=\!\left\{\begin{array}{ll}
\!\!\!1,~ |z|\leq M,\\
\!\!\!0,~ |z|>2M.
\end{array}\right.
\end{eqnarray*}
For any $\mathbf{v}=\sum_{i=1}^n \mathbf{v}_i\psi_i\in X_n$, defining $\mathbf{v}^M=\sum_{i=1}^{n}\Theta_M(\mathbf{v}_i)\mathbf{v}_i\psi_i$ as \cite{Hofmanova}, then we have $$\|\mathbf{v}^M\|_{C([0,T];X_n)}\leq 2M,$$ and
$$\|\mathbf{v}_1^M-\mathbf{v}_2^M\|_{X_n}\leq C(n)\|\mathbf{v}_1-\mathbf{v}_2\|_{X_n}.$$
Define the functional
\begin{align}
\langle\mathcal{U}[\rho, u],\psi_i\rangle=\int_{\mathcal{D}}(\mu\Delta u-u\cdot \nabla u-\kappa\rho\nabla \Psi)\psi_i dx,
\end{align}
for $\psi_i\in X_n$.

We now find the approximate velocity field  $u_n$ of the following momentum equation:
\begin{align}\label{def}
&u(t)=u_0^M+\int_{0}^{t}\mathcal{U}[\rho(u^M), u^M]ds+\int_{0}^{t}f(u^M, \nabla \Psi(u^M) )d\mathcal{W}.% maybe $M$
\end{align}
Observe that, here $u_0=u_0^M$. Define by the mapping $\mathfrak{M}$ the right-hand side of \eqref{def}, and we prove that the mapping is a contraction.

Denote $\rho_n^l=\rho(u_n^{M,l}), \Psi_n^l=\Psi(u_n^{M,l}), \mathbf{c}_n^{i,l}=\mathcal{T}(u^{M,l}_n), l=1,2$. After the truncation, we obtain %that the bound
\begin{align}\label{nb}
\sup_{t\in [0,T]}\|\mathbf{c}_n^{i}\|_{L^2}^2\leq C(M),
\end{align}
where the constant $C(M)$ does not depend on $w$.
Applying the continuity of mapping $\mathcal{T}(u)$, bound \eqref{nb} and the equivalence of finite-dimensional norm, we have
\begin{align*}
&\mathbb{E}\left(\sup_{t\in [0,T]}\left\|\int_{0}^{t}P_nP(\rho^1_n\nabla\Psi^1_n-\rho^2_n\nabla\Psi^2_n)ds\right\|_{X_n}^p\right)\\
&\leq C\mathbb{E}\left[\sup_{t\in [0,T]}\left(\int_{0}^{t}\left\|P_nP(\rho^1_n\nabla\Psi^1_n-\rho^2_n\nabla\Psi^2_n)\right\|_{\mathbb{L}^\frac{3}{2}}ds\right)^p\right]\\
&\leq C\mathbb{E}\left[\sup_{t\in [0,T]}\left(\int_{0}^{t}\|\rho^1_n-\rho^2_n\|_{L^2}\|\nabla\Psi^1_n\|_{L^6}+\|\rho_n^2\|_{L^2}\|\nabla\Psi^1_n-\nabla\Psi^2_n\|_{L^6}ds\right)^p\right]\\
&\leq CT^p\mathbb{E}\left[\sup_{t\in [0,T]}\left(\sum_{i=1}^2\|\mathbf{c}_n^{i,1}-\mathbf{c}_n^{i,2}\|^p_{L^2}\sum_{i=1}^2\|(\mathbf{c}_n^{i,1}, \mathbf{c}_n^{i,2)}\|^p_{L^2}\right)\right]\\
&\leq C(M)T^p\mathbb{E}\left(\sup_{t\in [0,T]}\sum_{i=1}^2\|\mathbf{c}_n^{i,1}-\mathbf{c}_n^{i,2}\|^{p}_{L^2}\right)\\
&\leq C(M)T^p\mathbb{E}\left(\sup_{t\in [0,T]}\|u_n^{M,1}-u_n^{M,2}\|^{p}_{X_n}\right)\\
&\leq C(n,M)T^p\mathbb{E}\left(\sup_{t\in [0,T]}\|u_n^{1}-u_n^{2}\|^{p}_{X_n}\right).
\end{align*}
By the global Lipschitz assumption on the noise operator $f$, the equivalence of finite-dimensional norm and the Burkholder-Davis-Gundy inequality, we have
\begin{align*}
&\mathbb{E}\left(\sup_{t\in [0,T]}\left\|\int_{0}^{t}P_nPf(u^{M,1}_n, \nabla \Psi^{1}_n)-P_nPf(u^{M,2}_n, \nabla \Psi^2_n)d\mathcal{W}\right\|_{X_n}^p\right)\\
&\leq C\mathbb{E}\left(\int_{0}^{T}\left\|P_nPf(u^{M,1}_n, \nabla \Psi^1_n)-P_nPf(u^{M,2}_n, \nabla \Psi^2_n)\right\|^2_{X_n}dt\right)^\frac{p}{2}\\
&\leq C\mathbb{E}\left(\int_{0}^{T}\left\|P_nPf(u^{M,1}_n, \nabla \Psi^1_n)-P_nPf(u^{M,2}_n, \nabla \Psi^2_n)\right\|^2_{\mathbb{L}^2}dt\right)^\frac{p}{2}\\
&\leq C(n)\ell_2^pT^\frac{p}{2}\mathbb{E}\left(\sup_{t\in [0,T]}\|u^1_n-u^2_n\|_{X_n}^{p}\right).
\end{align*}
For the rest of nonlinear term, we could estimate it by same argument as the Navier-Stokes equations. Considering these estimates, for a small time $T$, we build the contraction of $\mathfrak{M}$. Then, the local existence and uniqueness of approximate solutions to equation \eqref{def} follows by the Banach fixed point theorem for any fixed $n,M$.

Now, for fixed $n$, we can pass $M\rightarrow \infty$ to establish the existence of unique approximate solutions to system \eqref{Equ3.1}. Using Lemma \ref{lem3.1}, and a standard extension theory \cite[Corollary 3.2]{Hofmanova}, we could complete the proof.
Next, we verify the following necessary uniform a priori estimates.
\begin{lemma}\label{lem3.1}The approximate solutions $(\mathbf{c}_n^i, u_n)$ have the following a priori estimate for any $p\in[1,\infty), T>0$
\begin{eqnarray}
&&\mathbb{E}\left[\sup_{t\in [0,T]}\left(\|u_n\|_{\mathbb{L}^2}^2+\sum_{i=1}^{2}\int_{\mathcal{D}}\mathbf{c}_n^i{\rm log}\mathbf{c}_n^idx+\|\nabla\Psi_n\|_{L^2}^2+\|\Psi_n\|_{L^2(\partial\mathcal{D})}^2\right)^p\right]\nonumber\\&&+\mathbb{E}\left(\int_{0}^{T}\int_{\mathcal{D}}\mathbf{c}_n^i|\nabla \theta^i_n|^2dxdt\right)^p
+\mathbb{E}\left(\int_{0}^{T}\mu\|\nabla u_n\|_{\mathbb{L}^2}^2dt\right)^p\leq C,
\end{eqnarray}
where $\theta^i_n={\rm log}\mathbf{c}_n^i+z_i\Psi_n$, the constant $C$ is independent of $n, M$.
\end{lemma}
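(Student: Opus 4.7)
The plan is to derive a combined Itô energy identity for $u_n$ together with a deterministic entropy identity for $\mathbf{c}_n^i$, exploiting the classical Nernst-Planck-Poisson-Navier-Stokes cancellation between the Lorentz force and the ionic electric drift, together with the mass conservation \eqref{e1.8} and the Robin/blocking boundary conditions \eqref{e1.6}-\eqref{e1.7}.

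First I would apply Itô's formula to $\frac{1}{2}\|u_n\|_{\mathbb{L}^2}^2$ using the Galerkin SDE $\eqref{Equ3.1}_3$. Divergence freeness and the no-flux boundary annihilate the convective term, leaving the dissipation $\mu\|\nabla u_n\|_{\mathbb{L}^2}^2$, the Coulomb work $\kappa(\rho_n\nabla\Psi_n,u_n)$, the quadratic-variation correction $\frac{1}{2}\|P_nPf\|^2_{L_2(\mathcal{H},\mathbb{L}^2)}$, and a martingale driven by $(u_n,P_nPf\,d\mathcal{W})$. In parallel I would test the deterministic equation $\eqref{Equ3.1}_1$ with the entropy variable $\theta^i_n=\log\mathbf{c}_n^i+z_i\Psi_n$, justified by a standard $\log(\mathbf{c}_n^i+\delta)$ regularization and $\delta\to 0^+$, using $\mathbf{c}_n^i\ge 0$ from the maximum principle. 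Rewriting the ionic flux as $\mathbf{c}_n^i\nabla\theta^i_n$, the blocking condition \eqref{e1.7} kills the boundary trace and produces the Fisher-information dissipation $a_i\int_{\mathcal{D}}\mathbf{c}_n^i|\nabla\theta^i_n|^2\,dx$.

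Next I would track all the boundary terms and coupling cancellations. The conservation \eqref{e1.8} turns $\int\partial_t\mathbf{c}_n^i\log\mathbf{c}_n^i\,dx$ into $\frac{d}{dt}\int\mathbf{c}_n^i\log\mathbf{c}_n^i\,dx$. Summing $i$ in the $z_i\Psi_n$ contribution and using $\rho_n=-\Delta\Psi_n$ followed by integration by parts with the Robin condition \eqref{e1.6} (which, as $\eta$ is time-independent, implies $\partial_{\vec{n}}\partial_t\Psi_n=-\partial_t\Psi_n$) produces exactly $\frac{1}{2}\frac{d}{dt}\bigl(\|\nabla\Psi_n\|_{L^2}^2+\|\Psi_n\|_{L^2(\partial\mathcal{D})}^2\bigr)$, with no residual $\eta$-boundary integral. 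The transport term $\sum_i(u_n\cdot\nabla\mathbf{c}_n^i,\theta^i_n)$ splits into a $\log\mathbf{c}_n^i$ piece that vanishes by divergence freeness plus no-flux, and a $z_i\Psi_n$ piece which, after summing $i$ and one integration by parts, equals $-(u_n\cdot\nabla\Psi_n,\rho_n)$; multiplying the entropy identity by $\kappa$ makes this cancel the Coulomb work from the Itô identity exactly. Adding the two identities yields a balanced energy-entropy relation whose dissipative side contains precisely the quantities inside the second line of the claim.

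Finally, since $s\log s\ge -e^{-1}$ the entropy is bounded below, so the combined energy controls the quantity in the statement up to an additive constant $C|\mathcal{D}|$. To obtain the $L^p(\Omega)$ bound I would raise to the $p$-th power, take $\sup_{t\in[0,T]}$, then expectation; use assumption \eqref{as1} to dominate $\|f\|^2_{L_2}$ by $\|u_n\|_{\mathbb{L}^2}^2+\|\nabla\Psi_n\|_{L^2}^2$ (already inside the energy), apply the Burkholder-Davis-Gundy inequality to the martingale at power $p$, absorb half of the resulting supremum into the left side via Young's inequality, and close with Gronwall's lemma. The main obstacle is the careful bookkeeping of the previous paragraph: the boundary integrals from the Robin and blocking conditions must combine in exactly the right way to form the $\|\Psi_n\|_{L^2(\partial\mathcal{D})}^2$ piece of the energy without spurious $\eta$-remainders, and the singular test function $\log\mathbf{c}_n^i$ must be legitimised through the $\delta$-regularization. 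The truncation $u_n\mapsto u_n^M$ does not affect this structure since it preserves divergence freeness and the no-flux boundary on each Galerkin mode, so all the above cancellations remain intact and the resulting constant $C$ is independent of both $n$ and $M$.
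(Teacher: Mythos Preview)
Your overall strategy is exactly the one the paper uses: It\^o's formula for $\tfrac12\|u_n\|_{\mathbb L^2}^2$, testing $\eqref{Equ3.1}_1$ with the entropy variable $\theta^i_n$, the Robin/blocking bookkeeping producing $\tfrac12\frac{d}{dt}\|\Psi_n\|_{L^2(\partial\mathcal D)}^2$, the cancellation of the Coulomb work against the transport term, then BDG plus Gronwall. Your treatment of the $\log\mathbf c^i_n$ singularity via a $\delta$-shift and of the lower bound $s\log s\ge -e^{-1}$ is also in line with the paper (the former is handled in the paper by a remark).

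The one point where your argument does not go through as written is the last sentence about the $M$-truncation. In the two-layer scheme \eqref{def} the velocity equation carries $u_n^M$ in every nonlinear slot while the It\^o formula is applied to $\|u_n\|_{\mathbb L^2}^2$; the convective pairing is therefore $(u_n^M\!\cdot\!\nabla u_n^M,\,u_n)$, and since $u_n\neq u_n^M$ in general this does \emph{not} vanish merely because $u_n^M$ is divergence-free with no-flux trace. The same mismatch spoils the Coulomb cancellation, which needs the transport velocity in the ionic equation (namely $u_n^M$) to coincide with the test function in the momentum equation (namely $u_n$). The paper resolves this not by structural preservation but by introducing the stopping time $\tau_M=\inf\{t:\|u_n\|_{\mathbb L^2}^2\ge M\}$, observing that on $[0,\tau_M)$ one has $u_n=u_n^M$ so all cancellations hold verbatim, deriving \eqref{3.13} on $[0,T\wedge\tau_M]$ with a constant independent of $n,M$, and then passing $M\to\infty$ by monotone convergence. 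Replace your final sentence by this stopping-time argument and the proof is complete.
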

\begin{proof} To simplify the notation, we use $(u, \mathbf{c}^i)$ instead of $(u_n, \mathbf{c}_n^i)$. Define the stopping time $\tau_M$ as
\begin{eqnarray*}
\tau_M=\inf\left\{t\geq 0; \sup_{s\in [0,t]}\|u\|_{\mathbb{L}^2}^2\geq M\right\}.
\end{eqnarray*}
If the set is empty, we take $\tau_M=T$. Notice that $\tau_M$ is an increasing sequence with $\lim_{ M\rightarrow \infty}\tau_M= T$ and equation \eqref{def} coincides with the original equation on interval $[0, \tau_M)$.

 We use the It\^{o} formula to the function $\frac{1}{2}\|u\|_{\mathbb{L}^2}^2$, and apply the fact that
$(u\cdot \nabla u,u)=0$
to find
\begin{align}\label{e3.3}
\frac{1}{2}d\|u\|_{\mathbb{L}^2}^2+\mu\|\nabla u\|_{\mathbb{L}^2}^2dt&=-(\kappa\rho\nabla\Psi, u)dt+(f(u, \nabla \Psi), u)d\mathcal{W}\nonumber\\&\quad+\frac{1}{2}\|P_nPf(u, \nabla\Psi)\|_{L_2(\mathcal{H};\mathbb{L}^2(\mathcal{D}))}^2dt.
\end{align}
Note that, equation $\eqref{Equ3.1}_1$ could be rewritten as
\begin{align}\label{e3.1}
\partial_t\mathbf{c}^i+u\cdot \nabla \mathbf{c}^i=a_i{\rm div}(\mathbf{c}^i\nabla\theta_i).
\end{align}
Taking the inner product with $\theta_i$ on both sides of \eqref{e3.1}, we have
\begin{align}\label{e3.2}
 (\partial_t\mathbf{c}^i, \theta_i)+(u\cdot \nabla \mathbf{c}^i, \theta_i)=-a_i\|\sqrt{\mathbf{c}^i}\nabla\theta_i\|_{L^2}^2.
\end{align}
For the first term on the left-hand side of \eqref{e3.2}, integrating by parts and using the Robin boundary condition \eqref{e1.6}, we obtain %sum?
\begin{align}
\sum_{i=1}^{2}(\partial_t\mathbf{c}^i, \theta_i)&=\sum_{i=1}^{2}(\partial_t\mathbf{c}^i, {\rm log}\mathbf{c}^i+z_i\Psi)\nonumber\\
&=\sum_{i=1}^{2}\frac{d}{dt}\int_{\mathcal{D}}\mathbf{c}^i {\rm log}\mathbf{c}^i-\mathbf{c}^idx-(\partial_t\Delta \Psi, \Psi)\nonumber\\
&=\sum_{i=1}^{2}\frac{d}{dt}\int_{\mathcal{D}}\mathbf{c}^i {\rm log}\mathbf{c}^i+\frac{d}{dt}\|\nabla \Psi\|_{L^2}^2-(\partial_t\partial_n\Psi, \Psi)\nonumber\\
&=\sum_{i=1}^{2}\frac{d}{dt}\int_{\mathcal{D}}\mathbf{c}^i {\rm log}\mathbf{c}^i+\frac{d}{dt}\|\nabla \Psi\|_{L^2}^2+\frac{1}{2}\frac{d}{dt}\|\Psi\|_{L^2(\partial\mathcal{D})}^2.
\end{align}
In the second line, we have used $$\frac{d}{dt}\int_{\mathcal{D}}\mathbf{c}^i {\rm log}\mathbf{c}^i-\mathbf{c}^idx=\frac{d}{dt}\int_{\mathcal{D}}\mathbf{c}^i {\rm log}\mathbf{c}^idx,$$ due to the fact that $\mathbf{c}^i$ satisfies the quantity of mass conservation, see \eqref{e1.8}.

For the second term on the left-hand side of \eqref{e3.2}, using the non-slip boundary condition of $u$, {taking sum over $i$, we have
\begin{align}\label{e3.7}
\sum_{i=1}^{2}(u\cdot \nabla \mathbf{c}^i, \theta_i)&=\sum_{i=1}^{2}\int_{\mathcal{D}}u\cdot\nabla(\mathbf{c}^i {\rm log}\mathbf{c}^i-\mathbf{c}^i)dx+\sum_{i=1}^{2}(u\cdot\nabla\mathbf{c}^i, z_i\Psi )\nonumber\\
&=(u\cdot\nabla\rho, \Psi)=-(u\cdot \nabla\Psi, \rho).
\end{align}}
Combining \eqref{e3.3}-\eqref{e3.7}, we arrive at
\begin{align}\label{3.9}
&d\left(\frac{1}{2}\|u\|_{\mathbb{L}^2}^2+\sum_{i=1}^{2}\int_{\mathcal{D}}\mathbf{c}^i{\rm log}\mathbf{c}^idx+\frac{1}{2}\|\nabla\Psi\|_{L^2}^2+\frac{1}{2}\|\Psi\|_{L^2(\partial\mathcal{D})}^2\right)+\left(\sum_{i=1}^{2}a_i\|\sqrt{\mathbf{c}^i}\nabla\theta_i\|_{L^2}^2+\mu\|\nabla u\|_{L^2}^2\right)dt\nonumber\\
&=(f(u, \nabla\Psi), u)d\mathcal{W}+\frac{1}{2}\|P_nPf(u, \nabla\Psi)\|_{L_2(\mathcal{H};\mathbb{L}^2(\mathcal{D}))}^2dt.
\end{align}
 By the Burkholder-Davis-Gundy inequality and assumption \eqref{as1}, we get
\begin{align}\label{e3.11}
&\mathbb{E}\left(\sup_{t\in [0,T]}\left|\int_{0}^{t\wedge \tau_M}(f(u, \nabla \Psi), u)d\mathcal{W}\right|^p\right)\nonumber\\
&\leq C\mathbb{E}\left(\int_{0}^{T\wedge \tau_M}\sum_{k\geq 1}\left(\int_{\mathcal{D}}u\cdot f(u, \nabla \Psi)e_kdx\right)^2dt\right)^\frac{p}{2}\nonumber\\
&\leq C\mathbb{E}\left(\int_{0}^{T\wedge\tau_M}\|u\|_{\mathbb{L}^2}^2\sum_{k\geq 1}\left(\int_{\mathcal{D}}Pf^2(u, \nabla \Psi)e^2_kdx\right)dt\right)^\frac{p}{2}\nonumber\\
&\leq \frac{1}{2}\mathbb{E}\left(\sup_{t\in [0,T\wedge\tau_M]}\|u\|_{\mathbb{L}^2}^{2p}\right)+C\mathbb{E}\left(\int_{0}^{T\wedge\tau_M}\|u\|_{\mathbb{L}^2}^{2}+\|\nabla \Psi\|_{L^2}^{2}dt\right)^p.
\end{align}
Assumption \eqref{as1} yields
\begin{align}\label{3.11}
\|P_nPf(u, \nabla\Psi)\|_{L_2(\mathcal{H};\mathbb{L}^2(\mathcal{D}))}^2\leq \ell_1(\|u\|_{\mathbb{L}^2}^{2}+\|\nabla \Psi\|_{L^2}^{2}).
\end{align}
Taking integral of time $t$, then supremum over $t\in[0,T\wedge \tau_M]$ and power $p$, finally taking expectation in \eqref{3.9}, combining estimates \eqref{e3.11}-\eqref{3.11} and using the Gronwall lemma, we conclude
\begin{eqnarray}\label{3.13}
&&\mathbb{E}\left[\sup_{t\in [0,T\wedge \tau_M]}\left(\|u\|_{\mathbb{L}^2}^2+\sum_{i=1}^{2}\int_{\mathcal{D}}\mathbf{c}^i{\rm log}\mathbf{c}^idx+\|\nabla\Psi\|_{L^2}^2+\|\Psi\|_{L^2(\partial\mathcal{D})}^2\right)^p\right]\nonumber\\&&+\mathbb{E}\left(\int_{0}^{T\wedge \tau_M}\int_{\mathcal{D}}\mathbf{c}^i|\nabla \theta_i|^2dxdt\right)^p
+\mathbb{E}\left(\int_{0}^{T\wedge \tau_M}\mu\|\nabla u\|_{\mathbb{L}^2}^2dt\right)^p\leq C,
\end{eqnarray}
where the constant $C$ is independent of $n,M$. Letting $M\rightarrow \infty$, by the monotone convergence theorem, we complete the proof.
\end{proof}

\begin{remark} Notice that ${\rm log}\mathbf{c}^i$ has no meaning when $\mathbf{c}^i$ attains the value $0$. To be rigorous, we should work with the quantity ${\rm log}(\mathbf{c}^i+\delta)$, and then pass $\delta\rightarrow 0$.
\end{remark}

\begin{remark} The quantity in the first brackets of \eqref{3.13} is the total free energy, including three parts $\|u\|_{\mathbb{L}^2}^2$, $\int_{\mathcal{D}}\mathbf{c}^i{\rm log}\mathbf{c}^idx$,  $\|\nabla\Psi\|_{L^2}^2+\|\Psi\|_{L^2(\partial\mathcal{D})}^2$ which stand for the kinetic energy, the Gibbs free energy and the electrical energy, respectively.
\end{remark}
By the estimates $\nabla \Psi_n\in L^p(\Omega; L^\infty(0,T;L^2))$, $\Psi_n\in L^p(\Omega; L^\infty(0,T;L^2(\partial\mathcal{D})))$ and the generalized Poincar\'{e} inequality: $\|f\|_{L^2(D)}\le C(\|\nabla f\|_{L^2(D)}+\|f\|_{L^2(\partial D)})$, we have
\begin{align}\label{e1.21}
 \Psi_n\in L^p(\Omega; L^\infty(0,T;H^1)).
\end{align}

\begin{lemma}\label{lemma3.2} For the approximate solutions sequence $\mathbf{c}_n^i$, it holds for all $p\in [1,\infty), T>0$
\begin{align}\label{e3.13}
\mathbb{E}\left(\sup_{t\in [0,T]}\|\mathbf{c}_n^i\|_{L^2}^{2p}\right)+\mathbb{E}\left(\int_{0}^{T}a_i\|\nabla \mathbf{c}_n^i\|^2_{L^2}dt\right)^p\leq C,
\end{align}
where the constant $C$ is independent of $n$.
\end{lemma}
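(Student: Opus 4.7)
The plan is to prove an improved $L^2$--energy identity for $\mathbf{c}_n^i$ that exploits the opposite-sign structure $z_1>0>z_2$ and turns the drift coupling into dissipation. Concretely, I would multiply equation $\eqref{Equ3.1}_1$ by the nonnegative test function $\phi_i:=\frac{|z_i|}{a_i}\mathbf{c}_n^i$ and integrate over $\mathcal{D}$. The convective term $\int_{\mathcal{D}}(u_n\cdot\nabla\mathbf{c}_n^i)\phi_i\,dx$ vanishes by $\mathrm{div}\,u_n=0$ and \eqref{e1.5}; the diffusive contribution produces the coercive piece $|z_i|\|\nabla\mathbf{c}_n^i\|_{L^2}^2$, whose boundary leakage is killed by the blocking condition \eqref{e1.7}; and one further integration by parts of the drift, using $-\Delta\Psi_n=\rho_n$ and the Robin condition $\partial_{\vec n}\Psi_n=\eta-\Psi_n$ from \eqref{e1.6} with $\varsigma=1$, produces a signed bulk term $\pm\frac{z_i^2}{2}\int_{\mathcal{D}}\rho_n(\mathbf{c}_n^i)^2\,dx$ and a signed boundary term $\pm\frac{z_i^2}{2}\int_{\partial\mathcal{D}}(\mathbf{c}_n^i)^2(\eta-\Psi_n)\,dS$ (sign depending on $\mathrm{sign}(z_i)$). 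The $a_i$--weighting of $\phi_i$ is what strips the $a_i$ out of the drift coefficient, leaving exactly $z_i^2$ — which is precisely what the algebraic identity of the preceding remark needs.

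Summing over $i=1,2$ and invoking
\[
z_1^2(\mathbf{c}_n^1)^2-z_2^2(\mathbf{c}_n^2)^2=(z_1\mathbf{c}_n^1-z_2\mathbf{c}_n^2)(z_1\mathbf{c}_n^1+z_2\mathbf{c}_n^2)=\rho_n\bigl(|z_1|\mathbf{c}_n^1+|z_2|\mathbf{c}_n^2\bigr),
\]
the two bulk contributions collapse into the non-positive quantity $-\tfrac12\int_{\mathcal{D}}\rho_n^2(|z_1|\mathbf{c}_n^1+|z_2|\mathbf{c}_n^2)\,dx$, which I discard (or keep as extra dissipation). Writing $E_n:=\frac{z_1}{2a_1}\|\mathbf{c}_n^1\|_{L^2}^2+\frac{|z_2|}{2a_2}\|\mathbf{c}_n^2\|_{L^2}^2$, the pathwise identity reduces to
\[
\frac{d}{dt}E_n+z_1\|\nabla\mathbf{c}_n^1\|_{L^2}^2+|z_2|\|\nabla\mathbf{c}_n^2\|_{L^2}^2\leq \sum_{i=1}^{2}\frac{z_i^2}{2}\Bigl|\int_{\partial\mathcal{D}}(\mathbf{c}_n^i)^2(\eta-\Psi_n)\,dS\Bigr|.
\]

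To close the estimate I would bound each boundary integral by H\"older with exponents $(3/2,3)$ and the $\delta$--form of the trace inequality of Lemma \ref{lem2.1*} taken at $p=3\in[2,4)$, giving
\[
\int_{\partial\mathcal{D}}(\mathbf{c}_n^i)^2|\eta-\Psi_n|\,dS\le \|\mathbf{c}_n^i\|_{L^3(\partial\mathcal{D})}^2\,\|\eta-\Psi_n\|_{L^3(\partial\mathcal{D})}\le \bigl(\delta\|\nabla\mathbf{c}_n^i\|_{L^2}^2+C_\delta\|\mathbf{c}_n^i\|_{L^1}^2\bigr)\|\eta-\Psi_n\|_{L^3(\partial\mathcal{D})}.
\]
Choosing $\delta$ \emph{pathwise} as $\delta=\frac{|z_i|}{4(1+\|\eta-\Psi_n\|_{L^3(\partial\mathcal{D})})}$ absorbs the $\delta\|\nabla\mathbf{c}_n^i\|_{L^2}^2$ piece into the dissipation, and leaves a residue bounded by $C\bigl(1+\|\Psi_n\|_{H^1}^2\bigr)\|\mathbf{c}_n^i\|_{L^1(\mathcal{D})}^2$, where $\|\mathbf{c}_n^i\|_{L^1}=\|\mathbf{c}_0^i\|_{L^1}$ is conserved in time by \eqref{e1.8}, $\|\eta\|_{H^{3/2}(\partial\mathcal{D})}$ is deterministic, and $\sup_{t}\|\Psi_n\|_{H^1}$ has moments of every order by Lemma \ref{lem3.1} together with \eqref{e1.21}. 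Integrating in time, taking supremum over $[0,T]$, raising to the $p$--th power and taking expectation then yields both bounds in \eqref{e3.13} with a constant independent of $n$. No Gronwall loop in $\|\mathbf{c}_n^i\|_{L^2}^2$ is required once the opposite-charge identity has been used, and no It\^o correction enters because the $\mathbf{c}_n^i$--equations are pathwise-deterministic PDEs whose random coefficients $u_n,\Psi_n$ are already controlled by Lemma \ref{lem3.1}.

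The main obstacle, and the only genuinely delicate point, is the boundary trace: the coefficient $\|\eta-\Psi_n\|_{L^3(\partial\mathcal{D})}$ multiplying the would-be dissipation is random, so a uniform $\delta$ will not work; the path-dependent choice described above is what makes the absorption legitimate while keeping all constants $n$--independent. Extending to $m>2$ species without the special symmetry mentioned in the remark would fail precisely because the factorisation $z_1^2(\mathbf{c}^1)^2-z_2^2(\mathbf{c}^2)^2=\rho(|z_1|\mathbf{c}^1+|z_2|\mathbf{c}^2)$ has no analogue, so this step is genuinely two-species in nature.
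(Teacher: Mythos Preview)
Your overall strategy---multiply by a $|z_i|$-weighted copy of $\mathbf{c}_n^i$, exploit the opposite-charge factorisation to make the bulk drift term non-positive, and control the Robin boundary contribution via a trace inequality---is exactly the paper's route. Two points of comparison are worth recording.

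\emph{Test function.} Your choice $\phi_i=\frac{|z_i|}{a_i}\mathbf{c}_n^i$ is in fact the correct weighting when $a_1\ne a_2$: it strips the diffusivity from the drift so that the summed bulk term is exactly $-\tfrac12\int\rho_n^2(|z_1|\mathbf{c}_n^1+|z_2|\mathbf{c}_n^2)\,dx$. The paper multiplies by $|z_i|\mathbf{c}^i$ and then writes the drift as $-z_i|z_i|(\mathbf{c}^i\nabla\mathbf{c}^i,\nabla\Psi)$, silently dropping the factor $a_i$; your normalisation makes this step honest.

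\emph{Boundary term.} Here the two arguments genuinely diverge. The paper uses the first part of Lemma~\ref{lem2.1*} with $p=\tfrac83$, combines it with Gagliardo--Nirenberg, and applies Young's inequality with a \emph{fixed} small parameter; this produces a lower-order term $C\|\mathbf{c}^i\|_{L^2}^2$ on the right and the estimate is then closed by Gronwall. Your proposal instead takes the $\delta$-form of Lemma~\ref{lem2.1*} at $p=3$ and chooses $\delta$ \emph{pathwise}, $\delta\sim(1+\|\eta-\Psi_n\|_{L^3(\partial\mathcal{D})})^{-1}$, so that no $\|\mathbf{c}_n^i\|_{L^2}^2$ appears and Gronwall is avoided. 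The idea is sound, but as written it has a gap: Lemma~\ref{lem2.1*} asserts only that $C_\delta$ depends on $\delta$, not how. To make your absorption legitimate you must know that $C_\delta$ grows at most polynomially in $\delta^{-1}$ (so that the residual is controlled by some power of $1+\|\Psi_n\|_{H^1}$, for which you have all moments by \eqref{e1.21}). This is true and follows from the interpolation proof of the trace lemma, but you should either cite that explicit dependence or, more simply, bypass the issue entirely by using the first inequality of Lemma~\ref{lem2.1*} (which has fixed constants $C_1,C_2$) followed by Young's inequality---at which point a harmless $\|\mathbf{c}_n^i\|_{L^2}^2$ term reappears and you close with Gronwall exactly as the paper does.
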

\begin{proof}
Taking inner product with $|z_i|\mathbf{c}^i$ in equation $\eqref{Equ3.1}_1$, using the Robin boundary condition \eqref{e1.6}, we have
\begin{align*}
\frac{|z_i|}{2}d\|\mathbf{c}^i\|_{L^2}^2+a_i|z_i|\|\nabla \mathbf{c}^i\|_{L^2}^2dt&=-z_i|z_i|(\mathbf{c}^i\nabla \mathbf{c}^i, \nabla\Psi)dt=\frac{-z_i|z_i|}{2}(\nabla (\mathbf{c}^i)^2, \nabla\Psi)dt\\
&=\frac{-z_i|z_i|}{2}((\mathbf{c}^i)^2, \rho)dt-\frac{z_i|z_i|}{2}\int_{\partial\mathcal{D}}(\mathbf{c}^i)^2\partial_n\Psi d\mathcal{S}dt\\
&=\frac{-z_i|z_i|}{2}((\mathbf{c}^i)^2, \rho) dt-\frac{z_i|z_i|}{2}\int_{\partial\mathcal{D}}(\mathbf{c}^i)^2(\eta-\Psi) d\mathcal{S}dt.
\end{align*}
Choosing $p=\frac{8}{3}$ in Lemma \ref{lem2.1*}, then by the Gagliardo-Nirenberg inequality, we get when $d=3$
\begin{align}\label{3.21*}
\|\mathbf{c}^i\|_{L^\frac{8}{3}(\partial\mathcal{D})}&\leq C\|\nabla \mathbf{c}^i\|_{L^2}^\frac{3}{8}\|\mathbf{c}^i\|_{L^\frac{10}{3}}^\frac{5}{8}+C\|\mathbf{c}^i\|_{L^\frac{8}{3}}\nonumber\\
&\leq C\|\nabla \mathbf{c}^i\|_{L^2}^\frac{3}{8}\|\nabla \mathbf{c}^i\|_{L^2}^\frac{3}{8}\|\mathbf{c}^i\|_{L^2}^\frac{1}{4}+C\|\mathbf{c}^i\|_{L^1}^\frac{1}{4}\|\nabla\mathbf{c}^i\|_{L^2}^\frac{3}{4}\nonumber\\
&\leq C\|\nabla \mathbf{c}^i\|_{L^2}^\frac{3}{4}\|\mathbf{c}^i\|_{L^2}^\frac{1}{4}+C\|\mathbf{c}^i\|_{L^1}^\frac{1}{4}\|\nabla\mathbf{c}^i\|_{L^2}^\frac{3}{4},
\end{align}
then from \eqref{3.21*}, we bound
\begin{align*}
&\left|-\frac{z_i|z_i|}{2}\int_{\partial\mathcal{D}}(\mathbf{c}^i)^2(\eta-\Psi) d\mathcal{S}\right|\nonumber\\&\leq C\|\eta\|_{L^\infty(\partial\mathcal{D})}\|\mathbf{c}^i\|_{L^2(\partial\mathcal{D})}^2+\|\Psi\|_{L^4(\partial\mathcal{D})}\|\mathbf{c}^i\|_{L^\frac{8}{3}(\partial\mathcal{D})}\\
&\leq C(\|\eta\|_{L^\infty(\partial\mathcal{D})}+\|\Psi\|_{H^1})(\|\nabla \mathbf{c}^i\|_{L^2}^\frac{3}{4}\|\mathbf{c}^i\|_{L^2}^\frac{1}{4}+\|\mathbf{c}^i\|_{L^1}^\frac{1}{4}\|\nabla\mathbf{c}^i\|_{L^2}^\frac{3}{4})\\
&\leq C\varepsilon_1\|\nabla \mathbf{c}^i\|_{L^2}^2+C(\varepsilon_1)(\|\eta\|_{L^\infty(\partial\mathcal{D})}+\|\Psi\|_{H^1})^\frac{8}{5}(\|\mathbf{c}^i\|_{L^2}^\frac{2}{5}
+\|\mathbf{c}^i\|_{L^1}^\frac{2}{5})\nonumber\\
&\leq C\varepsilon_1\|\nabla \mathbf{c}^i\|_{L^2}^2+C(\varepsilon_1)(\|\eta\|^2_{L^\infty(\partial\mathcal{D})}+\|\Psi\|^2_{H^1})+C(\varepsilon_1)(\|\mathbf{c}^i\|_{L^2}^2
+\|\mathbf{c}^i\|_{L^1}^2).
\end{align*}
Choose $\varepsilon_1$ small enough such that
$$C\varepsilon_1<\frac{1}{2}a_i|z_i|.$$
Taking integral of time $t$, then supremum over $t\in[0,T]$ and power $p$,  taking expectation, finally taking sum of $i$, using assumption $z_1>0>z_2$, \eqref{e1.21} as well as \eqref{e1.8}, we have
\begin{align*}\label{e3.17}
&\mathbb{E}\left(\sup_{t\in [0,T]}\|\mathbf{c}^i\|_{L^2}^{2p}\right)+a_i|z_i|\mathbb{E}\left(\int_{0}^{T}\|\nabla \mathbf{c}^i\|_{L^2}^2dt\right)^p\nonumber\\&\leq \left(C+C\mathbb{E}\int_{0}^{T}\|\mathbf{c}^i\|_{L^2}^2dt-\frac{1}{2}\mathbb{E}\int_{0}^{T}\int_{\mathcal{D}}\rho (z^2_1(\mathbf{c}^1)^2-z^2_2(\mathbf{c}^2)^2)dxdt\right)^p\nonumber\\
&= \left(C+C\mathbb{E}\int_{0}^{T}\|\mathbf{c}^i\|_{L^2}^2dt-\frac{1}{2}\mathbb{E}\int_{0}^{T}\int_{\mathcal{D}}\rho^2 (|z_1|\mathbf{c}^1+|z_2|\mathbf{c}^2)dxdt\right)^p\nonumber\\
&\leq \left(C+C\mathbb{E}\int_{0}^{T}\|\mathbf{c}^i\|_{L^2}^2dt\right)^p.
\end{align*}
Then, using the Gronwall lemma, we conclude for $d=3$
\begin{align}
\mathbb{E}\left(\sup_{t\in [0,T]}\|\mathbf{c}^i\|_{L^2}^{2p}\right)+\mathbb{E}\left(\int_{0}^{T}\|\nabla \mathbf{c}^i\|_{L^2}^2dt\right)^p\leq C,
\end{align}
where $C$ is a constant independence of $n$. By a same argument, we have that estimate \eqref{e3.13} holds for $d=2$, completing the proof.
\end{proof}
Using the elliptic regularity theory, utilizing \eqref{e1.3}, \eqref{e3.13} and the assumption on $\eta$, we have the estimate for $\Psi_n$:
\begin{align*}
\nabla\Psi_n \in L^p(\Omega; L^\infty(0,T;H^1)),
\end{align*}
and
\begin{align*}
\nabla\Psi_n \in L^p(\Omega; L^2(0,T;H^2)),
\end{align*}
which together with bound \eqref{e1.21} imply
\begin{align}\label{e3.21}
\Psi_n\in L^p(\Omega; L^\infty(0,T;H^2)\cap L^2(0,T;H^3)).
\end{align}

We next improve the time regularity of $(u_n, \mathbf{c}_n^i)$.
\begin{lemma}\label{lem3.3} The sequence of approximate solutions $(u_n, \mathbf{c}_n^i)$ satisfies
\begin{align*}
&\mathbb{E}\Big(\|u_n\|^p_{C^\alpha([0,T];W^{-1,\frac{3}{2}})}\Big)\leq C,
\\
&\mathbb{E}\Big(\|\mathbf{\mathbf{c}}^i_n\|^p_{C^\alpha([0,T];H^{-1})}\Big)\leq C,
\\
&\mathbb{E}\Big(\|\Psi_n\|^p_{C^\alpha([0,T];H^{1})}\Big)\leq C,
\end{align*}
for $\alpha>0$ small, where the constant $C$ is independent of $n$.
\end{lemma}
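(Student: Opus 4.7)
The plan is to prove the three H\"older estimates separately by writing each equation in integrated form, bounding every right-hand side contribution in an appropriate negative Sobolev space, and then passing to H\"older regularity either by a direct time-integrability argument (for the deterministic parts) or the Kolmogorov--\v{C}entsov continuity criterion (for the stochastic integral). The uniform moment bounds from Lemmas \ref{lem3.1}--\ref{lemma3.2} together with \eqref{e3.21} provide all the inputs needed.

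For the velocity I start from the Galerkin identity
\begin{align*}
u_n(t)-u_n(s)&=\int_{s}^{t}\left[\mu P\Delta u_n-P_nP(u_n\cdot\nabla)u_n-\kappa P_nP\rho_n\nabla\Psi_n\right]d\tau\\
&\quad +\int_{s}^{t}P_nPf(u_n,\nabla\Psi_n)\,d\mathcal{W},
\end{align*}
and aim to place every deterministic integrand in $L^{2}(0,T;W^{-1,3/2})$. The viscous term is controlled by $\|\nabla u_n\|_{L^2}$, the convective term is rewritten as $\mathrm{div}(u_n\otimes u_n)$ thanks to $\mathrm{div}\,u_n=0$ and, using the interpolation $u_n\in L^{4}(0,T;L^{3})$ coming from Lemma \ref{lem3.1}, one gets $u_n\otimes u_n\in L^{2}(0,T;L^{3/2})$; the Coulomb term satisfies $\rho_n\nabla\Psi_n\in L^{\infty}(0,T;L^{3/2})$ via $\rho_n\in L^{\infty}(0,T;L^{2})$ and $\nabla\Psi_n\in L^{\infty}(0,T;L^{6})$. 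A Cauchy--Schwarz in time on $\int_{s}^{t}$ then yields a $|t-s|^{1/2}$ bound for the deterministic piece. For the stochastic integral, the Burkholder--Davis--Gundy inequality combined with assumption \eqref{as1} gives, for any $p\geq 1$,
$$\mathbb{E}\left\|\int_{s}^{t}P_nPf(u_n,\nabla\Psi_n)\,d\mathcal{W}\right\|_{\mathbb{L}^{2}}^{2p}\leq C|t-s|^{p},$$
and the Kolmogorov continuity criterion produces $C^{\alpha}$-regularity with $\alpha<\tfrac{1}{2}-\tfrac{1}{2p}$; choosing $p$ large yields any $\alpha\in(0,1/2)$. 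The embedding $\mathbb{L}^{2}\hookrightarrow W^{-1,3/2}$ completes the estimate for $u_n$.

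For the concentrations I use $\partial_{t}\mathbf{c}_n^{i}=a_i\Delta\mathbf{c}_n^{i}-\mathrm{div}(u_n\mathbf{c}_n^{i})+a_iz_i\mathrm{div}(\mathbf{c}_n^{i}\nabla\Psi_n)$ and measure each term in $H^{-1}$. The diffusion lies in $L^{2}(0,T;H^{-1})$ by Lemma \ref{lemma3.2}. For the convection I need $u_n\mathbf{c}_n^{i}\in L^{q}(0,T;L^{2})$ for some $q>1$: Gagliardo--Nirenberg in $3D$ gives $u_n,\mathbf{c}_n^{i}\in L^{8/3}(0,T;L^{4})$, so $u_n\mathbf{c}_n^{i}\in L^{4/3}(0,T;L^{2})$. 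The migration term $\mathbf{c}_n^{i}\nabla\Psi_n$ is better, sitting in $L^{4}(0,T;L^{2})$ via $\mathbf{c}_n^{i}\in L^{4}(0,T;L^{3})$ and $\nabla\Psi_n\in L^{\infty}(0,T;L^{6})$. Consequently $\partial_{t}\mathbf{c}_n^{i}\in L^{4/3}(0,T;H^{-1})$, and a H\"older inequality in time yields $\|\mathbf{c}_n^{i}(t)-\mathbf{c}_n^{i}(s)\|_{H^{-1}}\leq C|t-s|^{1/4}$ pathwise, which combined with the $p$-th moment estimates from Lemmas \ref{lem3.1}--\ref{lemma3.2} delivers the claim. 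For $\Psi_n$, the difference $\Psi_n(t)-\Psi_n(s)$ solves a Poisson problem with datum $\rho_n(t)-\rho_n(s)$ and \emph{homogeneous} Robin boundary condition (because $\eta$ does not depend on time), so elliptic regularity gives $\|\Psi_n(t)-\Psi_n(s)\|_{H^{1}}\leq C\|\rho_n(t)-\rho_n(s)\|_{H^{-1}}$ and the estimate reduces to the one just proved for $\mathbf{c}_n^{i}$.

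The main technical point is bookkeeping the exponents in the nonlinear convection and in the Coulomb/migration contributions so that each lands in the intended negative Sobolev space with uniformly (in $n$) integrable-in-time norm, the worst case being $\mathrm{div}(u_n\mathbf{c}_n^{i})$ and $\mathrm{div}(u_n\otimes u_n)$ in $3D$. The stochastic integral carries the usual subtlety of paying a high moment $p$ in Kolmogorov to extract a strict H\"older exponent; since the Lemmas already supply arbitrary $L^{p}(\Omega)$ moments this poses no additional difficulty.
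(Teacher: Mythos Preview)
Your proposal is correct and follows the same overall strategy as the paper: split each equation into its constituent terms, bound them in the target negative Sobolev space with uniform-in-$n$ time integrability, and deduce H\"older regularity. The technical execution differs in two respects worth noting. First, for the bilinear terms you rewrite $(u_n\cdot\nabla)u_n=\mathrm{div}(u_n\otimes u_n)$ and $u_n\cdot\nabla\mathbf{c}_n^i=\mathrm{div}(u_n\mathbf{c}_n^i)$ and then estimate the products directly via interpolation, whereas the paper keeps the advective form and invokes Lemma~\ref{lem6.1} (Simon's product estimate $\|fg\|_{W^{-1,t}}\leq\|f\|_{W^{1,r}}\|g\|_{W^{-1,s}}$) to place $u\cdot\nabla u$ and $u\cdot\nabla\mathbf{c}^i$ in $W^{-1,3/2}$; both routes produce the same time exponents. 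Second, for the stochastic integral you apply BDG on increments and then the Kolmogorov--\v{C}entsov criterion, while the paper bounds the H\"older quotient directly by choosing near-maximising times $t_1,t_2$ and applying BDG once; your route is arguably cleaner because it avoids the issue that the $t_1,t_2$ in the paper's argument are $\omega$-dependent. Your reduction of the $\Psi_n$ estimate to the $\mathbf{c}_n^i$ one via elliptic regularity for the time-differenced Poisson problem with homogeneous Robin data is the same as the paper's.
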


\begin{proof} We still use $(u, \mathbf{c}^i)$ instead of $(u_n, \mathbf{c}_n^i)$ for brevity.
Note that for a.s. $\omega$, and for any $\epsilon>0$, there exist $t_1,t_2\in [0,T]$ such that
$$\sup_{t_0\neq t'_0}\frac{\left\|\int_{t_0}^{t'_0}P(u\cdot \nabla u)ds\right\|_{W^{-1,\frac{3}{2}}}}{|t'_0-t_0|^\alpha}\leq \frac{\left\|\int_{t_1}^{t_2}P(u\cdot \nabla u)ds\right\|_{W^{-1,\frac{3}{2}}}}{|t_2-t_1|^\alpha}+\epsilon.$$
Taking $t=\frac{3}{2}, s=2, r=2$ in Lemma \ref{lem6.1} and using H\"{o}lder's inequality, for $\alpha\leq 1$,we get
\begin{align*}
&\mathbb{E}\!\left(\sup_{t,t'\in [0,T]}\frac{\left\|\int_{t'}^{t}P(u\cdot \nabla u)ds\right\|_{W^{-1,\frac{3}{2}}}}{|t-t'|^\alpha}\!\right)\\
&\leq \mathbb{E}\left(\frac{\left\|\int_{t_1}^{t_2}P(u\cdot \nabla u)ds\right\|_{W^{-1,\frac{3}{2}}}}{|t_2-t_1|^\alpha}\right)+\epsilon\\
&\leq \mathbb{E}\left(\frac{\int_{t_1}^{t_2}\|u\|_{\mathbb{H}^1} \|\nabla u\|_{\mathbb{H}^{-1}}ds}{|t_2-t_1|^\alpha}\right)+\epsilon\\
&\leq \frac{\left(\mathbb{E}\sup\limits_{t\in [0,T]}\| u\|_{\mathbb{L}^2}^2\right)^\frac{1}{2}\left(\mathbb{E}\left[\int_{t_1}^{t_2}\|u\|_{\mathbb{H}^{1}}ds\right]^2\right)^\frac{1}{2}}{|t_2-t_1|^\alpha}+\epsilon\\
&\leq\! |t_2-t_1|^{1-\alpha}\!\!\left[\mathbb{E}\left(\sup_{t\in [0,T]}\| u\|_{\mathbb{L}^2}^2\right)\right]^\frac{1}{2}\!\!\left(\mathbb{E}\int_{t_1}^{t_2}\| u\|^2_{\mathbb{H}^{1}}ds\right)^\frac{1}{2}\!\!+\epsilon\\
&\leq C.
\end{align*}
Similarly,
\begin{align*}
&\mathbb{E}\!\left(\sup_{t,t'\in [0,T]}\frac{\left\|\int_{t'}^{t}P(\rho\cdot \nabla \Psi)ds\right\|_{W^{-1,\frac{3}{2}}}}{|t-t'|^\alpha}\!\right)\\
&\leq \mathbb{E}\left(\frac{\int_{t_1}^{t_2}\|\rho\|_{H^1} \|\nabla \Psi\|_{H^{-1}}ds}{|t_2-t_1|^\alpha}\right)+\epsilon\\
&\leq \frac{\left(\mathbb{E}\sup\limits_{t\in [0,T]}\| \Psi\|_{L^2}^2\right)^\frac{1}{2}\left(\mathbb{E}\left[\int_{t_1}^{t_2}\|\rho\|_{\mathrm{H}^{1}}ds\right]^2\right)^\frac{1}{2}}{|t_2-t_1|^\alpha}+\epsilon\\
&\leq\! |t_2-t_1|^{1-\alpha}\!\!\left[\mathbb{E}\left(\sup_{t\in [0,T]}\| \Psi\|_{L^2}^2\right)\right]^\frac{1}{2}\!\!\left(\mathbb{E}\int_{t_1}^{t_2}\| \rho\|^2_{H^{1}}ds\right)^\frac{1}{2}\!\!+\epsilon\\
&\leq C.
\end{align*}
By employing the Burkholder-Davis-Gundy inequality and assumption \eqref{as1}, we obtain
\begin{align*}
\mathbb{E}\left(\sup_{t_0\neq t'_0}\frac{\left|\int_{t_0}^{t'_0}Pf(u,\nabla\Psi)d\mathcal{W}\right|}{|t'_0-t_0|^\alpha}\right)^p
\leq &~C\mathbb{E}\frac{\left(\int_{t_1}^{t_2}\|f(u,\nabla\Psi)\|_{L_2(\mathcal{H};L^2)}^2
dt\right)^\frac{p}{2}}{|t_2-t_1|^{\alpha p}}+\epsilon^p\\
\leq &~\frac{C\ell_1(t_2-t_1)^\frac{p}{2}\mathbb{E}(1+\|u\|_{L_t^\infty \mathbb{L}^2}+\|\nabla\Psi\|_{L_t^\infty L^2})^p}{|t_1-t_2|^{\alpha p}}+\epsilon^p\\
\leq &~C|t_2-t_1|^{\left(\frac{1}{2}-\alpha\right)p}+\epsilon^p\leq C,
\end{align*}
for any $\alpha<\frac{1}{2}$.

{Since $\mathbf{c}^i$ has the same regularity with $u$ and the convection term has same nonlinear construction, for $\alpha\leq 1$, one has
\begin{align*}
&\mathbb{E}\!\left(\sup_{t,t'\in [0,T]}\frac{\left\|\int_{t'}^{t}u\cdot \nabla \mathbf{c}^ids\right\|_{W^{-1,\frac{3}{2}}}}{|t-t'|^\alpha}\!\right)\\
&\leq\! |t_2-t_1|^{1-\alpha}\!\!\left[\mathbb{E}\left(\sup_{t\in [0,T]}\|\mathbf{c}^i\|_{L^2}^2\right)\right]^\frac{1}{2}\!\!\left(\mathbb{E}\int_{t_1}^{t_2}\| u\|^2_{\mathbb{H}^{1}}ds\right)^\frac{1}{2}\!\!+\epsilon\\
&\leq C.
\end{align*}}
Using the H\"{o}lder inequality and bounds \eqref{e3.13}, \eqref{e3.21}, we also have
\begin{align*}
&\mathbb{E}\!\left(\sup_{t,t'\in [0,T]}\frac{\left\|\int_{t'}^{t}{\rm div}(\mathbf{c}^i \nabla \Psi)ds\right\|_{H^{-1}}}{|t-t'|^\alpha}\!\right)\\
&\leq \mathbb{E}\left(\frac{\int_{t_1}^{t_2}\|\mathbf{c}^i\|_{L^3} \|\nabla \Psi\|_{L^6}ds}{|t_2-t_1|^\alpha}\right)+\epsilon\\
&\leq\! |t_2-t_1|^{1-\alpha}\!\!\left[\mathbb{E}\left(\sup_{t\in [0,T]}\| \Psi\|_{H^2}^2\right)\right]^\frac{1}{2}\!\!\left(\mathbb{E}\int_{t_1}^{t_2}\| \mathbf{c}^i\|^2_{H^{1}}ds\right)^\frac{1}{2}\!\!+\epsilon\\
&\leq C.
\end{align*}
We complete the proof following all estimates and the equation itself.
\end{proof}

\subsection{Stochastic compactness argument}
Now, we are in position to show the stochastic compactness of the approximate solutions sequence, which relies on the following Aubin-Lions lemma (see \cite[Corollary 5]{Simon}) and the Skorokhod-Jakubowski theorem (see \cite[Theorem 1]{jak}).

\begin{lemma}[The Aubin-Lions lemma]\label{lem4.5} Suppose that $X_{1}\subset X_{0}\subset X_{2}$ are Banach spaces, $X_{1}$ and $X_{2}$ are reflexive satisfying the embedding of $X_{1}$ into $X_{0}$ is compact.
Then for any $p\in [1, \infty),~ \alpha\in (0, 1)$, the embedding
\begin{eqnarray*}
L^{p}(0,T;X_{1})\cap C^{\alpha}([0,T];X_{2}) \hookrightarrow L^{p}(0,T;X_{0}),
\end{eqnarray*}
is compact.
\end{lemma}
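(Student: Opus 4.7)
The plan is to prove this classical Aubin--Lions--Simon type compactness embedding by combining Ehrling's interpolation inequality (which exploits the compactness of $X_1\hookrightarrow X_0$), time-averaging of the sequence, the Arzel\`a--Ascoli theorem applied to the smoothed sequence, and a diagonal/Cauchy argument.

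First, I would invoke Ehrling's lemma: since $X_1\hookrightarrow X_0$ is compact and $X_0\hookrightarrow X_2$ is continuous, for every $\varepsilon>0$ there exists $C_\varepsilon>0$ such that
$$\|g\|_{X_0}\le \varepsilon\|g\|_{X_1}+C_\varepsilon\|g\|_{X_2},\quad \forall\,g\in X_1.$$
It then suffices to show that any sequence $\{f_n\}$ bounded in $L^p(0,T;X_1)\cap C^\alpha([0,T];X_2)$ admits a Cauchy subsequence in $L^p(0,T;X_0)$. Set $K:=\sup_n\bigl(\|f_n\|_{L^p(X_1)}+\|f_n\|_{C^\alpha(X_2)}\bigr)$.

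Next I would introduce the Steklov-type time averages $f_n^h(t):=h^{-1}\int_{t-h/2}^{t+h/2} f_n(s)\,ds$ (with a suitable extension of $f_n$ outside $[0,T]$, or defined on $[h,T-h]$ with the endpoint slab handled separately via the $C^\alpha(X_2)$ bound). Three properties are central: (a) by H\"older continuity in $X_2$, $\|f_n^h(t)-f_n(t)\|_{X_2}\le K h^\alpha$ uniformly in $n,t$; (b) by H\"older's inequality in time combined with the $L^p(X_1)$ bound, $\|f_n^h(t)\|_{X_1}\le K h^{-1/p}$ for each fixed $t$; (c) applying Ehrling to the difference $f_n^h(t)-f_n^h(s)$ and using (b) together with the $X_2$-H\"older bound gives
$$\|f_n^h(t)-f_n^h(s)\|_{X_0}\le 2\varepsilon K h^{-1/p}+C_\varepsilon K|t-s|^\alpha.$$
By (b) the pointwise set $\{f_n^h(t)\}_n$ is bounded in $X_1$ and hence relatively compact in $X_0$, and by (c) with $\varepsilon,h$ fixed the family $\{f_n^h\}_n$ is equicontinuous into $X_0$. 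The Arzel\`a--Ascoli theorem therefore provides, for each fixed $h$, a subsequence converging in $C([0,T];X_0)$, and a diagonal extraction over a sequence $h_m\searrow 0$ yields a single subsequence $\{f_{n_k}\}$ such that $\{f_{n_k}^{h_m}\}_k$ converges in $L^p(0,T;X_0)$ for every $m$.

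Finally, I would establish the Cauchy property of $\{f_{n_k}\}$ in $L^p(0,T;X_0)$ by writing
$$\|f_{n_k}-f_{n_l}\|_{L^p(X_0)}\le \|f_{n_k}-f_{n_k}^{h}\|_{L^p(X_0)}+\|f_{n_k}^{h}-f_{n_l}^{h}\|_{L^p(X_0)}+\|f_{n_l}^{h}-f_{n_l}\|_{L^p(X_0)},$$
where the outer terms are controlled by Ehrling and (a):
$$\|f_n-f_n^h\|_{L^p(X_0)}\le 2\varepsilon\|f_n\|_{L^p(X_1)}+C_\varepsilon T^{1/p}\|f_n-f_n^h\|_{C(X_2)}\le 2\varepsilon K+C_\varepsilon T^{1/p}K h^\alpha.$$
Choosing $\varepsilon$ small first, then $h=h_m$ small enough that $C_\varepsilon T^{1/p}K h_m^\alpha$ is small, and finally $k,l$ large enough that the middle term is small (using the $L^p(X_0)$ convergence of $\{f_{n_k}^{h_m}\}$), produces the Cauchy conclusion. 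The main obstacle is the careful bookkeeping between the diverging $X_1$-bound $h^{-1/p}$ on the averages in (b) and the decaying $X_2$-error $h^\alpha$ in (a); the argument works only because Ehrling's parameter $\varepsilon$ is chosen \emph{before} $h$, decoupling the two scales. A secondary but genuine technicality is the endpoint behaviour of the time averages near $t=0,T$, which is handled by either extending $f_n$ by a H\"older-controlled reflection or by adding a vanishing boundary strip estimated with the $C^\alpha(X_2)$ norm.
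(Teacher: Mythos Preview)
The paper does not supply its own proof of this lemma; it is quoted verbatim from Simon \cite[Corollary~5]{Simon} and used as a black box in the compactness argument. Your proposal is a correct self-contained proof following the classical route---Ehrling's interpolation inequality, Steklov time-averages, Arzel\`a--Ascoli, and a diagonal extraction to pass to a Cauchy subsequence---which is essentially how Simon and related references establish such results. So you are filling in what the paper deliberately omits rather than offering an alternative approach. Two minor remarks: your argument never invokes the reflexivity of $X_{1}$ or $X_{2}$, which is consistent with Simon's framework where reflexivity is not needed for $p<\infty$; and your handling of the equicontinuity step (c) is slightly informal (the bound $2\varepsilon K h^{-1/p}+C_\varepsilon K|t-s|^\alpha$ contains a constant term, but for fixed $h$ one may choose the Ehrling parameter $\varepsilon$ first to make that term small, then $|t-s|$), though the idea is sound.
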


\begin{theorem}[The Skorokhod-Jakubowski theorem]\label{thm6.2} Let $X$ be a quasi-Polish space. If the set of probability measures $\{\nu_n\}_{n\geq 1}$ on $\mathcal{B}(X)$ is tight, then there exist a probability space $(\Omega, \mathcal{F}, \mathbb{P})$ and a sequence of random variables $u_n, u$ such that theirs laws are $\nu_n$, $\nu$ and $u_n\rightarrow u$, $\mathbb{P}$-a.s. as $n\rightarrow \infty$.
\end{theorem}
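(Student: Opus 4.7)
The plan is to reduce the assertion to the classical Skorokhod representation theorem on a Polish space. The only structure of a quasi-Polish space $X$ that I would exploit is the existence (part of Jakubowski's definition) of a countable family $(f_k)_{k\geq 1}$ of continuous functions $f_k: X \to \mathbb{R}$ that separates the points of $X$. Using this, define the map $F: X \to \mathbb{R}^{\mathbb{N}}$ by $F(x) = (f_k(x))_{k\geq 1}$, with $\mathbb{R}^{\mathbb{N}}$ equipped with the product topology so that it becomes a Polish space. Then $F$ is continuous and injective, so $F^{-1}$ is well defined on the image $F(X)$.

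Next, I would push the measures forward by $F$, setting $\tilde{\nu}_n := F_{*}\nu_n$ on $\mathbb{R}^{\mathbb{N}}$. Continuity of $F$ and tightness of $\{\nu_n\}$ on $X$ transfer to tightness of $\{\tilde{\nu}_n\}$ on the Polish space $\mathbb{R}^{\mathbb{N}}$. By Prokhorov's theorem one extracts a weakly convergent subsequence $\tilde{\nu}_{n_k} \to \tilde{\nu}$, and the classical Skorokhod representation theorem on $\mathbb{R}^{\mathbb{N}}$ produces a probability space $(\Omega,\mathcal{F},\mathbb{P})$ and random elements $Y_n,Y$ with laws $\tilde{\nu}_n,\tilde{\nu}$ such that $Y_n \to Y$ $\mathbb{P}$-a.s. in $\mathbb{R}^{\mathbb{N}}$.

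The main obstacle is pulling these random elements back to $X$, because $F(X)$ need not be Borel in $\mathbb{R}^{\mathbb{N}}$ and so $F^{-1}$ is not a priori measurable on all of $\mathbb{R}^{\mathbb{N}}$. I would overcome this by using tightness a second time: choose an increasing sequence of compacts $K_m \subset X$ with $\sup_n \nu_n(X \setminus K_m) < 1/m$. On each $K_m$ the restriction $F|_{K_m}: K_m \to F(K_m)$ is a continuous bijection from a compact Hausdorff space onto its image, hence a homeomorphism onto the compact (in particular Borel) set $F(K_m)$, and $(F|_{K_m})^{-1}$ is Borel. From $\tilde{\nu}_n(F(K_m)) = \nu_n(K_m)$ together with a Portmanteau/Fatou argument for weak limits on closed sets, one deduces $\tilde{\nu}\bigl(\bigcup_m F(K_m)\bigr) = 1$, so that $Y_n, Y \in \bigcup_m F(K_m)$ almost surely. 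On this full-measure event I define $u_n := F^{-1}(Y_n)$ and $u := F^{-1}(Y)$.

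It remains to check the almost sure convergence $u_n \to u$ in $X$. Since $Y_n \to Y$ in $\mathbb{R}^{\mathbb{N}}$ a.s., I need that for $\mathbb{P}$-a.e. $\omega$ the entire orbit $\{u_n(\omega)\}_{n\geq 1} \cup \{u(\omega)\}$ lies in a single compact $K_{m(\omega)}$; this is forced by uniform tightness combined with a Borel--Cantelli argument (the events $\{u_n \notin K_m\}$ have probability at most $1/m$ uniformly in $n$, giving a diagonal subsequence contained in one compact for almost every $\omega$). On that compact, continuity of $(F|_{K_{m(\omega)}})^{-1}$ converts $F(u_n) \to F(u)$ in $\mathbb{R}^{\mathbb{N}}$ into $u_n \to u$ in the topology of $X$. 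This second, subtler use of tightness---confining the whole trajectory to one compact in order to invert $F$ continuously---is, in my view, the most delicate ingredient of the proof.
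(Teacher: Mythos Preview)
The paper does not prove this theorem; it is quoted from Jakubowski \cite[Theorem~1]{jak} and used as a black box, so there is no proof in the paper to compare against.

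Your outline is essentially Jakubowski's own strategy: embed $X$ into the Polish space $\mathbb{R}^{\mathbb{N}}$ via the separating family, push the measures forward, apply Prokhorov and the classical Skorokhod theorem there, and pull back using that $F|_{K_m}$ is a homeomorphism onto its (compact) image. That skeleton is correct, and you have also correctly isolated the real difficulty, namely that continuity of $F^{-1}$ is only available on each $F(K_m)$, so one must confine the whole trajectory $\{u_n(\omega)\}\cup\{u(\omega)\}$ to a single compact.

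The gap is in your proposed remedy. From $\sup_n \mathbb{P}(u_n\notin K_m)\le 1/m$ one cannot conclude via Borel--Cantelli that almost surely all $u_n$ lie in some $K_{m(\omega)}$: for fixed $m$ the events $\{u_n\notin K_m\}$, as $n$ varies, are not summable and may well cover a set of full measure, and a diagonal choice only gives $u_{n_k}\in K_k$ eventually, not containment in a single $K_m$. Without this confinement, convergence $Y_n\to Y$ in $\mathbb{R}^{\mathbb{N}}$ does not transfer to $u_n\to u$ in the (possibly strictly finer) topology of $X$. Jakubowski closes this by a more careful construction: one keeps track of the random index $m_n=\min\{m:u_n\in K_m\}$, notes that $\sup_n\mathbb{P}(m_n>M)\le 1/M$ so that $(m_n)$ is uniformly tight in $\mathbb{N}$, and applies the Skorokhod representation to the joint laws of $(Y_n,m_n)$ on $\mathbb{R}^{\mathbb{N}}\times\overline{\mathbb{N}}$; on the new space this forces $\sup_n m_n<\infty$ almost surely, which is exactly the confinement you need. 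Your diagnosis of the obstacle is right, but the Borel--Cantelli step as written does not resolve it.
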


Our next aim is to construct a measure set, and then verify its tightness, then the Skorokhod-Jakubowski theorem could be applied.

 Define the probability measure
$$\pounds=\pounds^u\ast\pounds^{\mathbf{c}^i}\ast\pounds^\Psi\ast \pounds^\mathcal{W},$$
where $\pounds^u$ is the law of $u$ on the path space
$$\mathbb{X}_u:=L_{w}^{2}(0,T;\mathbb{H}^{1}(\mathcal{D}))\cap L^2(0,T; \mathbb{L}^2(\mathcal{D})),$$
$\pounds^{\mathbf{c}^i}$ is the law of $\mathbf{c}^i$ on the path space
$$\mathbb{X}_{\mathbf{c}^i}:=L_{w}^{2}(0,T;H^{1}(\mathcal{D}))\cap L^2(0,T; L^2(\mathcal{D})),$$
$\pounds^{\Psi}$ is the law of $\Psi$ on the path space
$$\mathbb{X}_{\Psi}:=L_{w}^{2}(0,T;H^{3}(\mathcal{D}))\cap L^2(0,T; H^2(\mathcal{D}))\cap C([0,T];H^1(\mathcal{D})),$$
$\pounds^{\mathcal{W}}$ is the law of $\mathcal{W}$ on the path space
$$\mathbb{X}_{\mathcal{W}}:=C([0,T];\mathcal{H}_0).$$

\begin{lemma} \label{lem3.4} The set of probability measures $\{\pounds^u_{n}\}_{n\geq 1}$ is tight on the path space $\mathbb{X}_u$.
\end{lemma}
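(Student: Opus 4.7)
The plan is to exhibit, for every $\varepsilon>0$, a relatively compact subset $K_\varepsilon \subset \mathbb{X}_u$ with $\sup_{n}\mathbb{P}(u_n\notin K_\varepsilon)<\varepsilon$. The candidate is built from a ball in $L^2(0,T;\mathbb{H}^1)$ (which controls both the weak-$L^2(0,T;\mathbb{H}^1)$ component and half of the Aubin--Lions hypothesis) intersected with a ball in $C^\alpha([0,T];W^{-1,3/2})$ (the time-regularity half). Concretely, for $R>0$ set
\begin{equation*}
K_R:=\left\{v\in L^2(0,T;\mathbb{H}^1)\cap C^\alpha([0,T];W^{-1,3/2}): \|v\|_{L^2(0,T;\mathbb{H}^1)}+\|v\|_{C^\alpha([0,T];W^{-1,3/2})}\le R\right\}.
\end{equation*}

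First I would verify that $K_R$ is relatively compact in $\mathbb{X}_u$. The embedding $\mathbb{H}^1(\mathcal{D})\hookrightarrow \mathbb{L}^2(\mathcal{D})\hookrightarrow W^{-1,3/2}(\mathcal{D})$ is compact in the first step and continuous in the second, so Lemma \ref{lem4.5} (Aubin--Lions) with $X_1=\mathbb{H}^1$, $X_0=\mathbb{L}^2$, $X_2=W^{-1,3/2}$ gives that $K_R$ is relatively compact in $L^2(0,T;\mathbb{L}^2)$. At the same time, bounded subsets of the reflexive space $L^2(0,T;\mathbb{H}^1)$ are weakly relatively compact, so $K_R$ is relatively compact in $L_w^2(0,T;\mathbb{H}^1)$. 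Combining these two yields relative compactness in the intersection $\mathbb{X}_u$.

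Next I would estimate the tail probability $\mathbb{P}(u_n\notin K_R)$ via Chebyshev. By Lemma \ref{lem3.1} one has a constant $C$ independent of $n$ with $\mathbb{E}\int_0^T \|\nabla u_n\|_{\mathbb{L}^2}^2\,dt\le C$, and by Lemma \ref{lem3.3} one has $\mathbb{E}\|u_n\|_{C^\alpha([0,T];W^{-1,3/2})}\le C$ for a small $\alpha>0$. Hence
\begin{equation*}
\mathbb{P}(u_n\notin K_R)\le \mathbb{P}\bigl(\|u_n\|_{L^2(0,T;\mathbb{H}^1)}>R/2\bigr)+\mathbb{P}\bigl(\|u_n\|_{C^\alpha([0,T];W^{-1,3/2})}>R/2\bigr)\le \frac{C}{R}\to 0,
\end{equation*}
uniformly in $n$, as $R\to \infty$. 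Choosing $R$ large enough so that the right-hand side is below $\varepsilon$ gives the desired tightness.

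The only subtle point, and the step I expect to be slightly delicate, is handling the weak-topology factor $L_w^2(0,T;\mathbb{H}^1)$: one must make sure that the ``compact set'' used for tightness is indeed compact (not merely sequentially compact) in that topology, or else invoke a metrizable variant. This is standard once one recalls that the weak topology on a norm-bounded ball of the separable reflexive space $L^2(0,T;\mathbb{H}^1)$ is metrizable, so sequential compactness suffices, and the Skorokhod--Jakubowski framework of Theorem \ref{thm6.2} accommodates precisely such quasi-Polish spaces. The remainder of the argument is the routine Chebyshev-plus-Aubin--Lions tightness scheme outlined above.
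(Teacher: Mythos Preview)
Your proposal is correct and follows essentially the same route as the paper: the paper also takes the intersection of a $C^\alpha([0,T];W^{-1,3/2})\cap L^2(0,T;\mathbb{H}^1)$-ball (compact in $L^2(0,T;\mathbb{L}^2)$ by Aubin--Lions) with an $L^2(0,T;\mathbb{H}^1)$-ball (relatively compact in $L^2_w(0,T;\mathbb{H}^1)$ by Banach--Alaoglu), and then bounds the tail probability via Chebyshev using Lemmas \ref{lem3.1} and \ref{lem3.3}. Your extra remark on metrizability of the weak topology on bounded balls and the quasi-Polish setting of Theorem \ref{thm6.2} is a welcome clarification the paper leaves implicit.
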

\begin{proof} The Aubin-Lions lemma \ref{lem4.5} yields that for any $K>0$, the set
$$B^1_K:=\left\{u: \|u\|_{L^2(0,T;\mathbb{H}^1)}+\|u\|_{C^\alpha(0,T;W^{-1,\frac{3}{2}})}\leq K\right\}$$
is compactly embedded into $L^2(0,T;\mathbb{L}^2)$. The Banach-Alaoglu theorem yields that
$$B^2_K:=\left\{u: \|u\|_{L^2(0,T;\mathbb{H}^1)}\leq K\right\}$$
is relatively compact in $L_w^2(0,T;\mathbb{H}^1)$. Therefore,
\begin{align*}
\pounds_n^{u}\big\{(B^1_K\cap B^2_K)^c\big\}&\leq \pounds_n^{u}\big\{(B^1_K)^c\big\}+\pounds_n^{u}\big\{(B^2_K)^c\big\}\\
&\leq \frac{C}{K}\mathbb{E}\left(\|u\|_{L^2(0,T;\mathbb{H}^1)}+\|u\|_{C^\alpha(0,T;W^{-1,\frac{3}{2}})}\right)\\
&\leq \frac{C}{K}.
\end{align*}
In the second line, we used the Chebyshev inequality,  in the third line, we used Lemmas \ref{lem3.1} and \ref{lem3.3}. Since the constant $C$ is independent of $n$, the desired tightness follows.
\end{proof}

Analogous to Lemma \ref{lem3.4}, we could show that  $\{\pounds^{\mathbf{c}^i}_{n}\}_{n\geq 1},  \{\pounds^\Psi_{n}\}_{n\geq 1}$ are tight on the path space $\mathbb{X}_{\mathbf{c}^i}, \mathbb{X}_\Psi$, respectively. In particular, the sequence $\mathcal{W}$ is only one element, then the set $\pounds^\mathcal{W}$ is weakly compact. We conclude that $\{\pounds_n\}_{n\geq 1}$ is tight on path space $\mathbb{X}$, where
$$\mathbb{X}:=\mathbb{X}_u\times\mathbb{X}_{\mathbf{c}^i}\times\mathbb{X}_{\Psi}\times\mathbb{X}_{\mathcal{W}}.$$

According to Theorem \ref{thm6.2}, we have the following proposition.
\begin{proposition}\label{pro3.1} There exist a subsequence $\{\pounds_{n_{k}}\}_{k\geq 1}$, a new probability space $(\tilde{\Omega},\tilde{\mathcal{F}},\tilde{\mathbb{P}})$, a new $\mathbb{X}$-valued measurable random variables sequence $(\tilde{u}_{n_{k}},\tilde{\mathbf{c}}^i_{n_{k}}, \widetilde{\Psi}_{n_{k}}, \widetilde{\mathcal{W}}_{n_{k}})$ and $(\tilde{u},\tilde{\mathbf{c}}^i, \widetilde{\Psi}, \widetilde{\mathcal{W}})$ such that

{\rm i}. $(\tilde{u}_{n_{k}}, \tilde{\mathbf{c}}^i_{n_{k}}, \widetilde{\Psi}_{n_{k}}, \widetilde{\mathcal{W}}_{n_{k}})\rightarrow(\tilde{u},\tilde{\mathbf{c}}^i, \widetilde{\Psi}, \widetilde{\mathcal{W}})$ $\tilde{\mathbb{P}}$-a.s. in the topology of $\mathbb{X}$,

{\rm ii}. the laws of $(\tilde{u}_{n_{k}},\tilde{\mathbf{c}}^i_{n_{k}}, \widetilde{\Psi}_{n_{k}}, \widetilde{\mathcal{W}}_{n_{k}})$ and $(\tilde{u},\tilde{\mathbf{c}}^i, \widetilde{\Psi}, \widetilde{\mathcal{W}})$ are given by $\{\pounds_{n_{k}}\}_{k\geq 1}$ and $\mu$, respectively,

{\rm iii}.  $\widetilde{\mathcal{\mathcal{W}}}_{n_{k}}$ is a Wiener process, relative to the filtration $\tilde{\mathcal{F}}_{t}^{n_{k}}=\sigma(\tilde{u}_{n_{k}},\tilde{\mathbf{c}}_{n_k}^i,\widetilde{\mathcal{W}}_{n_{k}})$.
\end{proposition}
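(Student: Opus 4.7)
The plan is to invoke the Skorokhod--Jakubowski theorem (Theorem \ref{thm6.2}) directly on the joint law sequence $\{\pounds_n\}$. The inputs required are: (a) the target space $\mathbb{X}$ is quasi-Polish; (b) the family $\{\pounds_n\}$ is tight on $\mathbb{X}$; (c) after obtaining the Skorokhod representation, the new driving processes $\widetilde{\mathcal{W}}_{n_k}$ remain Wiener processes with respect to the canonical filtration built from the new variables. Items (i) and (ii) of the proposition will then be read off from the representation, and (iii) will require an extra verification.

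First I would check that $\mathbb{X}$ is quasi-Polish. Each factor $\mathbb{X}_u$, $\mathbb{X}_{\mathbf{c}^i}$, $\mathbb{X}_\Psi$ is a finite intersection of a separable strong topology space with a weak Lebesgue-type space; on any norm-bounded ball the weak topology is metrizable, so each factor admits a countable family of continuous real-valued functions separating points, and $\mathbb{X}_\mathcal{W} = C([0,T];\mathcal{H}_0)$ is Polish. The product $\mathbb{X}$ therefore inherits the quasi-Polish property. Second, by Lemma \ref{lem3.4} the family $\{\pounds_n^u\}$ is tight on $\mathbb{X}_u$; entirely analogous arguments, relying on Lemmas \ref{lem3.1}--\ref{lem3.3} and the embedding \eqref{e3.21}, give tightness of $\{\pounds_n^{\mathbf{c}^i}\}$ on $\mathbb{X}_{\mathbf{c}^i}$ and of $\{\pounds_n^{\Psi}\}$ on $\mathbb{X}_\Psi$; the family $\{\pounds^\mathcal{W}\}$ consists of a single element and is automatically tight. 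Tightness of the product measures $\{\pounds_n\}$ on $\mathbb{X}$ follows since a product of tight families is tight.

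With these two inputs secured, Theorem \ref{thm6.2} produces a subsequence $\{\pounds_{n_k}\}$, a probability space $(\tilde{\Omega}, \tilde{\mathcal{F}}, \tilde{\mathbb{P}})$, random variables $(\tilde{u}_{n_k}, \tilde{\mathbf{c}}^i_{n_k}, \widetilde{\Psi}_{n_k}, \widetilde{\mathcal{W}}_{n_k})$ with laws $\pounds_{n_k}$, and a limit $(\tilde{u}, \tilde{\mathbf{c}}^i, \widetilde{\Psi}, \widetilde{\mathcal{W}})$ with law $\mu$, such that $\tilde{\mathbb{P}}$-a.s.\ convergence holds in $\mathbb{X}$. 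This delivers (i) and (ii) at once.

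For (iii), I would argue via equality of laws. By (ii) the joint law of $(\tilde{u}_{n_k}, \tilde{\mathbf{c}}^i_{n_k}, \widetilde{\mathcal{W}}_{n_k})$ equals that of $(u_{n_k}, \mathbf{c}^i_{n_k}, \mathcal{W})$ on the original space; since $\mathcal{W}$ is a cylindrical Wiener process adapted to a filtration rendering the approximate solutions $(u_{n_k}, \mathbf{c}^i_{n_k})$ adapted, each coordinate $\widetilde{\beta}^{n_k}_j := \langle \widetilde{\mathcal{W}}_{n_k}, e_j\rangle$ is, with respect to $\tilde{\mathcal{F}}_t^{n_k} := \sigma(\tilde{u}_{n_k}|_{[0,t]}, \tilde{\mathbf{c}}^i_{n_k}|_{[0,t]}, \widetilde{\mathcal{W}}_{n_k}|_{[0,t]})$, a real Brownian motion and the family is mutually independent. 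The standard check is the following: for any bounded continuous functional $\gamma$ on $C([0,s]; \mathcal{H}_0) \times \mathbb{X}_u|_{[0,s]} \times \mathbb{X}_{\mathbf{c}^i}|_{[0,s]}$ and $0 \le s \le t$, the expectation
\[
\tilde{\mathbb{E}} \Big[ \big( \widetilde{\mathcal{W}}_{n_k}(t) - \widetilde{\mathcal{W}}_{n_k}(s) \big)\, \gamma\big(\widetilde{\mathcal{W}}_{n_k}|_{[0,s]}, \tilde{u}_{n_k}|_{[0,s]}, \tilde{\mathbf{c}}^i_{n_k}|_{[0,s]}\big) \Big]
\]
is equal to the corresponding expectation on the original space (by equality of laws) and the latter vanishes because $\mathcal{W}$ is a Wiener process for $\mathcal{F}_t$; the analogous identity for the tensor product of increments yields the covariance. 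This is the Lévy-type characterization of a cylindrical Wiener process and gives (iii).

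The main obstacle in this programme is the filtration bookkeeping in (iii): one must carefully distinguish the filtration naturally attached to $\widetilde{\mathcal{W}}_{n_k}$ from the augmented filtration $\tilde{\mathcal{F}}_t^{n_k}$ including the approximate solutions, and verify that the martingale property survives this augmentation. The remaining steps (quasi-Polish check, tightness of the product, invocation of Theorem \ref{thm6.2}) are standard applications of the estimates already proved in Lemmas \ref{lem3.1}--\ref{lem3.3}.
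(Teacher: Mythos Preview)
Your proposal is correct and follows the same approach as the paper: the paper simply states that Proposition \ref{pro3.1} follows from the already-established tightness of $\{\pounds_n\}$ on $\mathbb{X}$ (Lemma \ref{lem3.4} and the remarks immediately after it) together with Theorem \ref{thm6.2}, without giving a separate proof. Your write-up is in fact more explicit than the paper's, supplying the quasi-Polish verification and the equality-of-laws argument for item (iii), both of which the paper leaves entirely implicit.
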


In the following, we still use the sequence $(\tilde{u}_{n}, \tilde{\mathbf{c}}^i_{n}, \widetilde{\Psi}_{n}, \widetilde{\mathcal{W}}_{n})$ representing new subsequence.
As a result of Proposition \ref{pro3.1}, the sequence $\tilde{u}_n, \tilde{\mathbf{c}}^i_n, \widetilde{\Psi}_n$ also shares the following bounds: for any $p\in [1,\infty)$
\begin{align}
&\widetilde{\mathbb{E}}\left(\sup_{t\in [0,T]}\|\tilde{\mathbf{c}}_n^i\|_{L^2}^{2p}\right)+\widetilde{\mathbb{E}}\left(\int_{0}^{T}a_i\|\nabla \tilde{\mathbf{c}}^i_n\|^2_{L^2}dt\right)^p\leq C, \label{e3.29}\\
&\widetilde{\mathbb{E}}\left(\sup_{t\in [0,T]}\|\tilde{u}_n\|_{\mathbb{L}^2}^{2p}\right)+\widetilde{\mathbb{E}}\left(\int_{0}^{T}\mu\|\nabla \tilde{u}_n\|^2_{\mathbb{L}^2}dt\right)^p\leq C, \label{e3.30}\\
&\widetilde{\mathbb{E}}\left(\sup_{t\in [0,T]}\|\widetilde{\Psi}_n\|_{H^2}^{2p}\right)+\widetilde{\mathbb{E}}\left(\int_{0}^{T}\|\widetilde{\Psi}_n\|^2_{H^3}dt\right)^p\leq C,\label{e.33}
\end{align}
where constant $C$ is independent of $n$.

\subsection{Identify the limit by passing $n\rightarrow\infty$}
In the following, the H\"{o}lder inequality will be frequently applied to estimate all terms, we do not mention it for brevity.
{Choose $\phi\in H^1$, and decompose}
\begin{align}\label{e3.32}
&\int_{0}^{t}\left(a_i\left(\nabla \tilde{\mathbf{c}}^i_n+z_i\tilde{\mathbf{c}}^i_n\nabla \widetilde{\Psi}_n-\nabla \tilde{\mathbf{c}}^i- z_i\tilde{\mathbf{c}}^i\nabla \widetilde{\Psi}\right), \nabla\phi\right)ds\nonumber\\
&=\int_{0}^{t}\left(a_i\left(\nabla \tilde{\mathbf{c}}^i_n-\nabla \tilde{\mathbf{c}}^i+z_i(\tilde{\mathbf{c}}^i_n-\tilde{\mathbf{c}}^i)\nabla \widetilde{\Psi}_n\right), \nabla\phi\right)ds\nonumber\\
&\quad+\int_{0}^{t}\left(a_i\left(\nabla \tilde{\mathbf{c}}^i_n-\nabla \tilde{\mathbf{c}}^i_n+z_i\tilde{\mathbf{c}}^i(\nabla \widetilde{\Psi}_n-\nabla \widetilde{\Psi})\right), \nabla\phi\right)ds\nonumber\\
&=:I_1+I_2.
\end{align}
Next, we show that each term $I_i, i=1,2$ goes to zero as $n\rightarrow\infty$, $\tilde{\mathbb{P}}$-a.s. By the Sobolev embedding $H^2\hookrightarrow L^\infty$, we have
\begin{align*}
|I_1|&\leq \int_{0}^{t}(a_i\nabla \tilde{\mathbf{c}}^i_n-a_i\nabla \tilde{\mathbf{c}}^i, \nabla \phi)ds+C\|\nabla\phi\|_{L^2}\|\tilde{\mathbf{c}}_n^i-\tilde{\mathbf{c}}^i\|_{L^2(0,T;L^2)}\|\nabla \widetilde{\Psi}\|_{L^2(0,T;L^\infty)}\\
&\leq \int_{0}^{t}(a_i\nabla \tilde{\mathbf{c}}^i_n-a_i\nabla \tilde{\mathbf{c}}^i, \nabla \phi)ds+C\|\nabla\phi\|_{L^2}\|\tilde{\mathbf{c}}_n^i-\tilde{\mathbf{c}}^i\|_{L^2(0,T;L^2)}\|\nabla \widetilde{\Psi}\|_{L^2(0,T;H^2)},
\end{align*}
and then using the convergence that $\tilde{\mathbf{c}}^i_n\rightarrow \tilde{\mathbf{c}}^i$ in $L^2(0,T; L^2)$ and weakly in $L^2(0,T;H^1)$, $\tilde{\mathbb{P}}$-a.s. and bound \eqref{e.33}, we deduce
\begin{align}
|I_1|\rightarrow 0, ~{\rm as}~ n\rightarrow \infty, ~ \tilde{\mathbb{P}}\mbox{-a.s.}
\end{align}
We use the Sobolev embedding $H^1\hookrightarrow L^6$ to find
\begin{align*}
|I_2|&\leq \int_{0}^{t}(a_i\nabla \tilde{\mathbf{c}}^i_n-a_i\nabla \tilde{\mathbf{c}}^i, \nabla \phi)ds+C\|\nabla\phi\|_{L^2}\|\nabla \widetilde{\Psi}_n-\nabla\widetilde{\Psi}\|_{L^2(0,T; L^6)}\|\mathbf{c}^i\|_{L^2(0,T;L^3)}\\
&\leq  \int_{0}^{t}(a_i\nabla \tilde{\mathbf{c}}^i_n-a_i\nabla \tilde{\mathbf{c}}^i, \nabla \phi)ds+C\|\nabla\phi\|_{L^2}\|\nabla \widetilde{\Psi}_n-\nabla\widetilde{\Psi}\|_{L^2(0,T; H^1)}\|\mathbf{c}^i\|_{L^2(0,T;H^1)},
\end{align*}
then the convergence $\widetilde{\Psi}_n\rightarrow \widetilde{\Psi}$ in $L^2(0,T; H^2)$, $\tilde{\mathbf{c}}^i_n\rightarrow \tilde{\mathbf{c}}^i$ weakly in $L^2(0,T;H^1)$ $\tilde{\mathbb{P}}$-a.s. and bound \eqref{e3.29} yield
\begin{align}\label{e3.34}
|I_2|\rightarrow 0, ~{\rm as}~ n\rightarrow \infty, ~ \tilde{\mathbb{P}}\mbox{-a.s.}
\end{align}
Therefore, from \eqref{e3.32}-\eqref{e3.34}, it holds $\tilde{\mathbb{P}}$-a.s.
$$\lim_{n\rightarrow\infty}\int_{0}^{t}\left(a_i\left(\nabla \tilde{\mathbf{c}}^i_n+z_i\tilde{\mathbf{c}}^i_n\nabla \widetilde{\Psi}_n\right), \nabla\phi\right)ds=\int_{0}^{t}\left(a_i\left(\nabla \tilde{\mathbf{c}}^i+z_i\tilde{\mathbf{c}}^i\nabla \widetilde{\Psi}\right), \nabla\phi\right)ds.$$

Decompose
\begin{align}\label{e3.35}
\int_{0}^{t}(\tilde{u}_n\cdot\nabla \mathbf{c}_n^i-\tilde{u}\cdot\nabla \tilde{\mathbf{c}}^i, \phi)ds&=\int_{0}^{t}((\tilde{u}_n-\tilde{u})\cdot\nabla \tilde{\mathbf{c}}_n^i, \phi)ds
+\int_{0}^{t}(\tilde{u}\cdot\nabla (\tilde{\mathbf{c}}_n^i-\tilde{\mathbf{c}}^i), \phi)ds.
\end{align}
For the first term on the right-hand side of \eqref{e3.35}, we first obtain
\begin{align*}
\left|\int_{0}^{t}((\tilde{u}_n-\tilde{u})\cdot\nabla \tilde{\mathbf{c}}_n^i, \phi)ds\right|&\leq C\|\phi\|_{H^1}\|\tilde{u}_n-\tilde{u}\|^\frac{1}{2}_{L^2(0,T;\mathbb{L}^2)}\|\nabla\tilde{u}_n, \nabla\tilde{u}\|^\frac{1}{2}_{L^2(0,T;\mathbb{L}^2)}\|\nabla \tilde{\mathbf{c}}_n^i\|_{L^2(0,T;L^2)},
\end{align*}
using the convergence $\tilde{u}_n\rightarrow \tilde{u}$ in $L^2(0,T;\mathbb{L}^2)$, $\tilde{\mathbb{P}}$-a.s. and bound \eqref{e3.30}, we deduce
\begin{align}
\left|\int_{0}^{t}((\tilde{u}_n-\tilde{u})\cdot\nabla \tilde{\mathbf{c}}_n^i, \phi)ds\right|\rightarrow 0, ~{\rm as}~ n\rightarrow \infty, ~ \tilde{\mathbb{P}}\mbox{-a.s.}
\end{align}
Since $\tilde{\mathbf{c}}_n^i\rightharpoonup \tilde{\mathbf{c}}^i$ in $L^2(0,T;H^1)$, we get
\begin{align}\label{e3.37}
\int_{0}^{t}(\tilde{u}\cdot\nabla (\tilde{\mathbf{c}}_n^i-\tilde{\mathbf{c}}^i), \phi)ds\rightarrow 0,  ~{\rm as}~ n\rightarrow \infty, ~ \tilde{\mathbb{P}}\mbox{-a.s.}
\end{align}
From \eqref{e3.35}-\eqref{e3.37}, we have $\tilde{\mathbb{P}}$\mbox{-a.s.}
$$\lim_{n\rightarrow\infty}\int_{0}^{t}(\tilde{u}_n\cdot\nabla \tilde{\mathbf{c}}_n^i-\tilde{u}\cdot\nabla \tilde{\mathbf{c}}^i, \phi)ds=0.$$

Next, we pass the limit in the velocity equation. Since $\tilde{u}_n$ and $\tilde{\mathbf{c}}_n^i$ have same regularity and the convection terms have same construction, we only focus on the term $\kappa\tilde{\rho}\nabla\widetilde{\Psi}$ and stochastic term. {For any $\varphi\in \mathbb{H}^1$, decompose}
\begin{align}\label{e3.38}
\int_{0}^{t}(\kappa\tilde{\rho}_n\nabla\widetilde{\Psi}_n-\kappa\tilde{\rho}\nabla\widetilde{\Psi}, \varphi)ds=\int_{0}^{t}(\kappa(\tilde{\rho}_n-\tilde{\rho})\nabla\widetilde{\Psi}_n, \varphi)ds
+\int_{0}^{t}(\kappa\tilde{\rho}\nabla(\widetilde{\Psi}_n-\widetilde{\Psi}), \varphi)ds.
\end{align}
Using the fact that $\tilde{\rho}_n\rightarrow \tilde{\rho}$ in $L^2(0,T;L^2)$, we obtain $\tilde{\mathbb{P}}$-a.s.
\begin{align}
\left|\int_{0}^{t}(\kappa(\tilde{\rho}_n-\tilde{\rho})\nabla\widetilde{\Psi}_n, \varphi)ds\right|\leq C\|\varphi\|_{\mathbb{H}^1}\|\tilde{\rho}_n-\tilde{\rho}\|_{L^2(0,T;L^2)}\|\nabla\widetilde{\Psi}_n\|_{L^\infty(0,T;L^3)}\rightarrow 0.
\end{align}
Similar to \eqref{e3.37}, using the weak convergence of $\widetilde{\Psi}_n$, we have $\tilde{\mathbb{P}}$-a.s.
\begin{align}\label{e3.40}
\int_{0}^{t}(\kappa\tilde{\rho}\nabla(\widetilde{\Psi}_n-\widetilde{\Psi}), \varphi)ds\rightarrow 0.
\end{align}
From \eqref{e3.38}-\eqref{e3.40}, we arrive at
\begin{align}
\lim_{n\rightarrow\infty}\int_{0}^{t}(\kappa\tilde{\rho}_n\nabla\widetilde{\Psi}_n-\kappa\tilde{\rho}\nabla\widetilde{\Psi}, \varphi)ds=0, ~\tilde{\mathbb{P}}\mbox{-a.s.}
\end{align}

For the stochastic term, we are going to verify the both conditions in Lemma \ref{lem4.6} for passing the limit. First, Proposition \ref{pro3.1} gives
$$\widetilde{\mathcal{W}}_{n}\rightarrow \widetilde{\mathcal{W}}~ {\rm in~ probability~ in} ~C([0,T];\mathcal{H}_0).$$
Using the continuity of $f$ and the strong convergence of $\tilde{u}_n, \tilde{\mathbf{c}}^i$ in $L^2(0,T;L^2)$, we have for $(w,t)\in \tilde{\Omega}\times [0,T]$, a.e.
\begin{align}
\|f(\tilde{u}_n, \nabla\widetilde{\Psi}_n)-f(\tilde{u}, \nabla\widetilde{\Psi})\|_{L_2(\mathcal{H};L^2)}\rightarrow 0.
\end{align}
From bounds \eqref{e3.30}-\eqref{e.33}, we infer that $\|f(\tilde{u}_n, \nabla\widetilde{\Psi}_n)\|_{L_2(\mathcal{H};L^2)}$ is uniformly integrable. The Vitali convergence theorem could be applied to obtain
\begin{align}
\|f(\tilde{u}_n, \nabla\widetilde{\Psi}_n)-f(\tilde{u}, \nabla\widetilde{\Psi})\|_{L^p(\tilde{\Omega}; L^2(0,T;L_2(\mathcal{H};L^2)))}\rightarrow 0.
\end{align}
Then, by Lemma \ref{lem4.6}, we have
$$\int_{0}^{t}f(\tilde{u}_{n}, \nabla\widetilde{\Psi}_{n})d \widetilde{\mathcal{W}}_{n}\rightarrow \int_{0}^{t}f(\tilde{u}, \nabla\widetilde{\Psi})d \widetilde{\mathcal{W}},$$
in probability in $L^2(0,T;L_2(\mathcal{H};L^2))$.

 Next, we claim that the sets
\begin{align}\label{e3.44}
 \left\{\int_{0}^{t}(-\tilde{u}_n\cdot\nabla\tilde{u}_n
 +\mu\Delta \tilde{u}_n-\kappa\tilde{\rho}_n\nabla\widetilde{\Psi}_n, \varphi)ds\right\}_{n\geq 1}
\end{align}
and
\begin{align}\label{e3.45}
 \left\{\int_{0}^{t}(a_i\Delta \tilde{\mathbf{c}}^i_n-\tilde{u}_n\cdot\nabla\tilde{\mathbf{c}}^i_n+a_i{\rm div}(z_i\tilde{\mathbf{c}}_n^i\nabla\widetilde{\Psi}_n), \phi)ds\right\}_{n\geq 1}
\end{align}
are uniformly integrable in $L^p(\tilde{\Omega}; L^2(0,T))$.

{Indeed, using bounds \eqref{e3.29}-\eqref{e.33} and the Gagliardo-Nirenberg inequality
\begin{align}\label{e4.18}
\left\{\begin{array}{ll}
\!\!\!\|v\|_{L^4}\leq C\|v\|_{L^2}^\frac{1}{2} \|\nabla v\|_{L^2}^\frac{1}{2},~ {\rm if~ }~ d=2,\\
\!\!\!\|v\|_{L^3}\leq C\|v\|_{L^2}^\frac{1}{2} \|\nabla v\|_{L^2}^\frac{1}{2},~ {\rm if~ }~ d=3,
\end{array}\right.
\end{align}
we have
\begin{align*}
&\left\|\int_{0}^{t}(-\tilde{u}_n\cdot\nabla\tilde{u}_n, \varphi)ds\right\|_{L^p(\tilde{\Omega}; L^2(0,T))}\leq C(T)\widetilde{\mathbb{E}}\left(\int_{0}^{T}(-\tilde{u}_n\cdot\nabla\tilde{u}_n, \varphi)dt\right)^p\\
&\leq C\|\varphi\|^p_{\mathbb{H}^1}\widetilde{\mathbb{E}}\left(\int_{0}^{T}\|\tilde{u}_n\|^\frac{1}{2}_{\mathbb{L}^2}\|\nabla\tilde{u}_n\|^\frac{3}{2}_{\mathbb{L}^2}dt\right)^p\\
&\leq C\|\varphi\|^p_{\mathbb{H}^1}\|\tilde{u}_n\|_{L^{2p}(\tilde{\Omega}; L^\infty(0,T;\mathbb{L}^2))}^\frac{p}{2}\|\tilde{u}_n\|_{L^{2p}(\tilde{\Omega}; L^2(0,T;\mathbb{H}^1))}^\frac{3p}{2},
\end{align*}
and
\begin{align*}
&\left\|\int_{0}^{t}(\kappa\tilde{\rho}_n\nabla\widetilde{\Psi}_n, \varphi)ds\right\|_{L^p(\tilde{\Omega}; L^2(0,T))}\\
&\leq C(T)\widetilde{\mathbb{E}}\left(\int_{0}^{T}(\kappa\tilde{\rho}_n\nabla\widetilde{\Psi}_n, \varphi)dt\right)^p\\
&\leq C(T)\|\varphi\|^p_{\mathbb{H}^1}
\widetilde{\mathbb{E}}\left(\int_{0}^{T}\|\tilde{\rho}_n\|_{L^2}^\frac{1}{2}\|\nabla\tilde{\rho}_n\|_{L^2}^\frac{1}{2}\|\nabla\widetilde{\Psi}_n\|_{L^2}dt\right)^p\\
&\leq C\|\varphi\|^p_{\mathbb{H}^1}\|\tilde{\rho}_n\|_{L^{2p}(\tilde{\Omega}; L^\infty(0,T;L^2))}^\frac{p}{2}\|\tilde{\rho}_n\|_{L^{2p}(\tilde{\Omega}; L^2(0,T;H^1))}^\frac{3p}{2},
\end{align*}
and from the blocking boundary condition, we get
\begin{align*}
&\left\|\int_{0}^{t}({\rm div}(a_i\nabla\tilde{\mathbf{c}}_n^i+ z_i\tilde{\mathbf{c}}_n^i\nabla\widetilde{\Psi}_n), \phi)ds\right\|_{L^p(\tilde{\Omega}; L^2(0,T))}\\
&\leq C(T)\widetilde{\mathbb{E}}\left(\int_{0}^{T}({\rm div}(a_i\nabla\tilde{\mathbf{c}}_n^i+z_i\tilde{\mathbf{c}}_n^i\nabla\widetilde{\Psi}_n), \phi)dt\right)^p\\
&\leq C\|\nabla\phi\|^p_{L^2}\widetilde{\mathbb{E}}\left(\int_{0}^{T}\|\tilde{\mathbf{c}}^i_n\|_{L^3}\|\nabla\widetilde{\Psi}_n\|_{L^6}+\|\nabla \tilde{\mathbf{c}}_n^i\|_{L^2}dt\right)^p\\
&\leq C\|\nabla\phi\|^p_{L^2}\|\tilde{\mathbf{c}}^i_n\|_{L^{2p}(\tilde{\Omega}; L^2(0,T;H^1))}^p\|\widetilde{\Psi}_n\|_{L^{2p}(\tilde{\Omega}; L^\infty(0,T;H^2))}^p.
\end{align*}
Similarly,
\begin{align*}
&\left\|\int_{0}^{t}(\tilde{u}_n\cdot\nabla\tilde{\mathbf{c}}^i_n, \phi)ds\right\|_{L^p(\tilde{\Omega}; L^2(0,T))}\nonumber\\ &\leq C\|\phi\|^p_{H^1}\left(\|\tilde{u}_n\|_{L^{2p}(\tilde{\Omega}; L^\infty(0,T;\mathbb{L}^2))}^p+\|\nabla\tilde{u}_n\|_{L^{2p}(\tilde{\Omega}; L^2(0,T;\mathbb{L}^2))}^p\right)\|\tilde{\mathbf{c}}^i_n\|_{L^{2p}(\tilde{\Omega}; L^2(0,T;H^1))}^p.
\end{align*}}
The uniform integrability of linear term $\mu\Delta \tilde{u}_n$ follows from bound \eqref{e3.30} directly.

We have everything in hand to pass the limit. Define the functionals as
\begin{align*}
\mathbf{F}_1(\tilde{u}_{n},\tilde{\mathbf{c}}^i_{n},\widetilde{\mathcal{W}}_{n},\varphi)&=(\tilde{u}_{n}(0), \varphi)+\int_{0}^{t}\mu\langle A_0\tilde{u}_{n}, \varphi\rangle ds-\int_{0}^{t}(\tilde{u}_{n}\cdot\nabla\tilde{u}_{n}, \varphi)ds\\
&\quad-\int_{0}^{t}(\kappa\tilde{\mathbf{\rho}}_n \nabla \tilde{\Psi}_n, \varphi)ds+\left(\int_{0}^{t}f(\tilde{u}_{n}, \nabla\widetilde{\Psi}_{n})d \widetilde{\mathcal{W}}_{n}, \varphi\right),\\
\mathbf{F}_2(\tilde{u}_{n},\tilde{\mathbf{c}}^i_{n},\phi )&=(\tilde{\mathbf{c}}^i_{n}(0), \phi)-\int_{0}^{t}a_i(\nabla\tilde{\mathbf{c}}^i_{n}+z_i\tilde{\mathbf{c}}^i_n\nabla\widetilde{\Psi}_n, \nabla\phi) ds-\int_{0}^{t}(\tilde{u}_{n}\cdot\nabla  \tilde{\mathbf{c}}^i_{n}, \phi)ds,
\end{align*}
and
\begin{align*}
\mathbf{F}_1(\tilde{u},\tilde{\mathbf{c}}^i,\widetilde{\mathcal{W}},\varphi)&=(\tilde{u}(0), \varphi)+\int_{0}^{t}\mu\langle A_0\tilde{u}, \varphi\rangle ds-\int_{0}^{t}(\tilde{u}\cdot\nabla\tilde{u}, \varphi)ds\\
&\quad-\int_{0}^{t}(\kappa\tilde{\mathbf{\rho}} \nabla \tilde{\Psi}, \varphi)ds+\left(\int_{0}^{t}f(\tilde{u}, \nabla\widetilde{\Psi})d \widetilde{\mathcal{W}}, \varphi\right),\\
\mathbf{F}_2(\tilde{u},\tilde{\mathbf{c}}^i,\phi )&=(\tilde{\mathbf{c}}^i(0), \phi)-\int_{0}^{t}a_i(\nabla\tilde{\mathbf{c}}^i+z_i\tilde{\mathbf{c}}^i\nabla\widetilde{\Psi}, \nabla\phi) ds-\int_{0}^{t}(\tilde{u}\cdot\nabla \tilde{\mathbf{c}}^i, \phi)ds.
\end{align*}

Combining all convergence properties and the uniform integrability of \eqref{e3.44}, \eqref{e3.45},  the Vitali convergence theorem implies, when $n\rightarrow\infty$
\begin{align}
&\widetilde{\mathbb{E}}\int_{0}^{T}1_{A}(\mathbf{F}_1( \tilde{u}_{n},\tilde{\mathbf{c}}^i_{n},\widetilde{\mathcal{W}}_{n},\varphi)-\mathbf{F}_1( \tilde{u},\tilde{\mathbf{c}}^i,\widetilde{\mathcal{W}},\varphi))dt\rightarrow 0,\label{e.51}\\
&\widetilde{\mathbb{E}}\int_{0}^{T}1_{A}(\mathbf{F}_2(\tilde{u}_{n},\tilde{\mathbf{c}}^i_{n},\phi )-\mathbf{F}_2(\tilde{u},\tilde{\mathbf{c}}^i,\phi ))dt\rightarrow 0,\label{e.52}
\end{align}
for any set $A\subset [0,T]$, $\varphi\in \mathbb{H}^1, \phi\in H^1$, where $1_{\cdot}$ is the indicator function.

Note that from Proposition \ref{pro3.1}, we could deduce that the sequence $(\tilde{u}_n, \tilde{\mathbf{c}}_n^i)$ also satisfies the approximate system \eqref{Equ3.1}, that is,
\begin{align}
\mathbf{F}_1( \tilde{u}_{n},\tilde{\mathbf{c}}^i_{n},\widetilde{\mathcal{W}}_{n},\varphi)&=(\tilde{u}_n(t) ,\varphi),\\
\mathbf{F}_2(\tilde{u}_{n},\tilde{\mathbf{c}}^i_{n},\phi )&=(\tilde{\mathbf{c}}^i_{n}(t),\phi ), \label{e.54}
\end{align}
for the detailed proof, see \cite{ww}. {Considering \eqref{e.51}-\eqref{e.54} and the weak convergence of $\tilde{u}_n, \tilde{\mathbf{c}}^i_{n}$, for $\varphi\in \mathbb{H}^1, \phi\in H^1$, it holds
\begin{align}
\mathbf{F}_1( \tilde{u},\tilde{\mathbf{c}}^i,\widetilde{\mathcal{W}},\varphi)&=(\tilde{u}(t) ,\varphi),\label{e.55}\\
\mathbf{F}_2(\tilde{u},\tilde{\mathbf{c}}^i,\phi )&=(\tilde{\mathbf{c}}^i(t),\phi ).\label{e.56}
\end{align}
Thus, we establish the existence of global weak martingale solution to system \eqref{Equ1.1}-\eqref{e1.7} in the sense of Definition \ref{def3.1}.}

\section{Local existence and uniqueness of strong pathwise solution in $d=2,3$.}

In this section, we prove the existence of unique strong pathwise solution in $2D$ and $3D$ cases and a blow-up criterion when the capacitance is 0 (${\partial_{\vec{n}}\Psi(x,t)}|_{\partial \mathcal{D}}=\eta$). At the beginning, we give three kinds of conceptions of solution: strong solution both strong in probability and PDEs sense, maximal strong solution, and global solution.
\begin{definition}[strong pathwise solution]\label{def2.1} Let $(\Omega,\mathcal{F},\{\mathcal{F}_t\}_{t\geq 0},\mathbb{P})$ be a fixed probability space, $\mathcal{W}$ be an $\mathcal{F}_t$-cylindrical Wiener process. We say $(u, \mathbf{c}^i, \tau)$ is a strong pathwise solution to system \eqref{Equ1.1}-\eqref{e1.7} if the following hold:

i. $\tau$ $\mathbb{P}$-a.s. is a strictly positive stopping time;

ii. the process $u$ is an $\mathbb{H}^1$-valued $\mathcal{F}_t$-progressively measurable satisfying
$$u(\cdot\wedge \tau)\in L^p(\Omega; C([0,T]; \mathbb{H}^1)\cap L^2(0,T;H^2));$$

iii. the processes $\mathbf{c}^i, i=1,2,\cdots, m$ are $H^1$-valued $\mathcal{F}_t$-progressively measurable satisfying
$$\mathbf{c}^i\geq 0,~\mbox{a.e.} ~{\rm and} ~\mathbf{c}^i(\cdot\wedge \tau)\in L^p(\Omega; C([0,T]; H^1\cap L^j)\cap L^2(0,T;H^2)),$$
 for all $j\geq 1$, and $\Psi$ is an $H^3$-valued $\mathcal{F}_t$-progressively measurable process with
$$\Psi(\cdot\wedge \tau)\in L^p(\Omega; C([0,T]; H^3)\cap L^2(0,T;H^4));$$

iv. it holds $\mathbb{P}$-a.s.
\begin{eqnarray*}
&&\mathbf{c}^i(t\wedge \tau)=\mathbf{c}^i_0-\int_{0}^{t\wedge \tau}u\cdot \nabla \mathbf{c}^ids+\int_{0}^{t\wedge \tau}a_i{\rm div}(\nabla \mathbf{c}^i+z_i\mathbf{c}^i\nabla\Psi)ds,~i=1,2,\cdots, m,\\
&&u(t\wedge \tau)=u_0+\mu\int_{0}^{t\wedge \tau}P\Delta uds-\int_{0}^{t\wedge \tau}P(u\cdot \nabla)uds- \int_{0}^{t\wedge \tau}P\kappa\rho\nabla \Psi ds\nonumber \\ &&\qquad\qquad+\int_{0}^{t\wedge \tau}Pf(u, \nabla\Psi)d\mathcal{W},\\
&&-\Delta \Psi=\sum_{i=1}^m z_i\mathbf{c}^i=\rho, ~\mbox{a.e.}
\end{eqnarray*}
 for all $t>0$.
\end{definition}

\begin{definition}[maximal pathwise solution and global solution]\label{def4.2} $(u, \mathbf{c}^i, \tau_R, \tau)$ is a maximal solution to system \eqref{Equ1.1}-\eqref{e1.7} if

${\rm i}.$ $\tau$ $\mathbb{P}$-a.s. is a strictly positive stopping time;

${\rm ii}.$ $\tau_R$ is an increasing sequence with $\lim_{R\rightarrow \infty}\tau_R=\tau$, $\mathbb{P}$-a.s.;

${\rm iii}.$ for every $R$, $(u, \mathbf{c}^i, \tau_R)$ is a local strong pathwise solution in the sense of Definition \ref{def2.1};

We say $(u, \mathbf{c}^i)$ is global solution if the lifetime $\tau=\infty$, $\mathbb{P}$-a.s.
\end{definition}

We call a stopping time $\tau$ is accessible if there exists a sequence of stopping time $\tau_R$ such that $\tau_R<\tau$, $\mathbb{P}$-a.s. and $\lim_{R\rightarrow \infty}\tau_R=\tau$, $\mathbb{P}$-a.s.

Let us formulate the main results of this section.
\begin{theorem}\label{thm} Assume that the initial data $(u_0, \mathbf{c}_0^i)$ are $\mathcal{F}_0$-measurable random variables satisfying
$$u_0\in L^p(\Omega; \mathbb{H}^1), ~\mathbf{c}_0^i\in L^p(\Omega; H^1\cap L^j)~{\rm and }~ \mathbf{c}_0^i\geq 0, ~\mathbb{P}\mbox{-a.s.}$$
for all $p\in [2,\infty), j\in [1,\infty)$ and the function $\eta$ satisfies $\eta \in H^\frac{5}{2}(\partial\mathcal{D})$ and $\|\eta\|_{L^\infty(\partial\mathcal{D})}\leq \frac{1}{2C}$ for a constant $C$. The noise operator $f$ satisfies assumptions \eqref{as3}, \eqref{as4}. Then, there exists a unique maximal strong pathwise solution to system \eqref{Equ1.1}-\eqref{e1.7} with $\varsigma=0$ in the sense of Definition \ref{def2.1}.
\end{theorem}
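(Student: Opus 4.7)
My strategy is a cut-off plus fixed-point construction of approximate solutions, followed by pathwise uniqueness and a standard gluing argument to produce the maximal solution. For each $R > 0$ I would first truncate the nonlinear terms through a smooth cut-off $\theta_R \in C_c^\infty([0,\infty))$ with $\theta_R \equiv 1$ on $[0,R]$ and $\theta_R \equiv 0$ outside $[0,2R]$, replacing $(u\cdot\nabla)u$, $\rho\nabla\Psi$, $u\cdot\nabla\mathbf{c}^i$, and $\mathrm{div}(z_i\mathbf{c}^i\nabla\Psi)$ by their counterparts weighted by $\theta_R(\|u\|_{\mathbb{H}^1}^2+\sum_i\|\mathbf{c}^i\|_{H^1}^2)$. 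The truncated drift is then globally Lipschitz in the natural Sobolev topology, and $R$ only delays blow-up.

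Following the strategy outlined in the introduction, the truncated system is itself built by linearization. Given a predictable process $(v, d^i)$ in the solution class of Definition \ref{def2.1} with $\|v\|_{\mathbb{H}^1}^2+\sum_i\|d^i\|_{H^1}^2 \leq 2R$, define $\Phi(v, d^i) := (u, \mathbf{c}^i)$ as the unique solution of the linear parabolic system obtained by freezing all nonlinear factors at $(v, d^i)$, replacing the blocking boundary condition \eqref{e1.7} by a homogeneous Neumann condition so the Laplacian is standard self-adjoint, and treating $f(v, \nabla\Psi(d^i))\,d\mathcal{W}$ as additive noise. Well-posedness of this linear problem at the $H^1$/$L^2H^2$ level, together with $\Psi \in L^\infty H^3 \cap L^2 H^4$ via the elliptic estimate \eqref{e1.3}, follows from a Galerkin scheme combined with the mollifier regularization of Lemma \ref{lem4.4}. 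Existence of an approximate solution of the truncated \emph{original} system is then equivalent to a fixed point of $\Phi$; the true blocking boundary and the original quadratic nonlinearities reappear only in the uniform a priori estimates needed afterwards.

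The main obstacle I expect is closing the $H^1$ energy estimate for $\mathbf{c}^i$ in the presence of the blocking boundary while keeping every constant in the Picard argument deterministic in $\omega$. Differentiating the $\mathbf{c}^i$ equation and testing with $-\Delta\mathbf{c}^i$, or equivalently pairing $\nabla \mathbf{c}^i$ with $\nabla$ of the equation, produces a boundary contribution controlled schematically by $\int_{\partial\mathcal{D}}|\mathbf{c}^i|^2(|\eta|+|\Psi|)\,d\mathcal{S}$, which I would estimate via the trace inequality in Lemma \ref{lem2.1*}. The hypothesis $\|\eta\|_{L^\infty(\partial\mathcal{D})} \leq \tfrac{1}{2C}$ is precisely what is required to absorb the $\eta$-contribution into the dissipation. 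To neutralize the $\omega$-dependence of constants entering BDG and the Gronwall step I would use the announced stopping-time device: on the event $\{t \leq \tau_R(v, d^i)\}$ where $\tau_R(v, d^i) := \inf\{t \geq 0 : \|v(t)\|_{\mathbb{H}^1}^2+\sum_i\|d^i(t)\|_{H^1}^2 \geq R\}$, all constants become deterministic functions of $R$, and choosing the Picard interval $T_0 = T_0(R)$ small enough gives strict contraction of $\Phi$ in $L^p(\Omega; C([0,T_0]; \mathbb{L}^2 \times L^2) \cap L^2(0,T_0; \mathbb{H}^1\times H^1))$. Non-negativity of $\mathbf{c}^i$ passes through the linearization by the parabolic maximum principle applied to each Picard iterate.

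Finally, pathwise uniqueness follows by subtracting two solutions $(u_j, \mathbf{c}^i_j, \tau_j)$, $j = 1,2$, and applying It\^o's formula to $\|u_1-u_2\|_{\mathbb{L}^2}^2 + \sum_i\|\mathbf{c}^i_1-\mathbf{c}^i_2\|_{L^2}^2$ on $[0, \tau_1\wedge\tau_2]$; the Lipschitz assumption \eqref{as2} handles the stochastic term, a further application of Lemma \ref{lem2.1*} controls the blocking-boundary contribution in the difference equation, and Gronwall gives equality $\mathbb{P}$-a.s. Pathwise uniqueness together with the $R$-indexed local existence then allows the usual concatenation to produce a maximal solution $(u, \mathbf{c}^i, \tau_R, \tau)$ with accessible stopping times $\tau_R = \inf\{t : \|u(t)\|_{\mathbb{H}^1}^2 + \sum_i\|\mathbf{c}^i(t)\|_{H^1}^2 \geq R\}$ and $\tau = \lim_{R\to\infty}\tau_R$, $\mathbb{P}$-a.s., which both establishes the existence claim and automatically yields the announced blow-up criterion.
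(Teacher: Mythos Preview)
Your outline shares the paper's backbone---cut-off, linearization to a Neumann problem with additive noise, Banach fixed point with a stopping-time device, pathwise uniqueness, and gluing to a maximal solution---but it departs from the paper at one structural point, and that departure leaves a gap as written.

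The paper does \emph{not} obtain the strong pathwise solution directly from the fixed point. It inserts a second layer of approximation: on top of the cut-off $\Phi_R$, it mollifies the nonlinearities by $\mathcal{J}_{\varepsilon(n)}$ (system~\eqref{M1.2}). The fixed point then produces, for each $n$, an approximate solution $(u_n,\mathbf{c}^i_n)$; the contraction closes in the $H^1$-based space $X$ only because the mollifier furnishes the extra derivative at the price of a factor $n$ (cf.\ the estimates in Lemma~4.2). To remove $n$, the paper proves uniform-in-$n$ bounds (Lemmas~\ref{lem2.1}--\ref{lem2.2}), passes to the limit via stochastic compactness (Skorokhod--Jakubowski) to obtain a \emph{martingale} solution of the truncated system, and only then upgrades to a strong pathwise solution through the Gy\"ongy--Krylov lemma (Proposition~\ref{pro5.2}) using the uniqueness of Proposition~\ref{lem2.3}. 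This Yamada--Watanabe route is the mechanism by which the solution lands on the original filtered probability space.

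Your proposal bypasses both the mollifier layer and the compactness/Gy\"ongy--Krylov step, aiming for a direct contraction in $L^p(\Omega; C([0,T_0];\mathbb{L}^2\times L^2)\cap L^2(0,T_0;\mathbb{H}^1\times H^1))$. Two concrete issues arise. First, your cut-off is $\theta_R(\|v\|_{\mathbb{H}^1}^2+\sum_i\|d^i\|_{H^1}^2)$, which requires pointwise-in-$t$ control of the $H^1$ norms; that control is absent from your contraction space, so the map is not even well defined on the ball in which you contract. Second, even granting a fixed point in that weaker topology, you do not automatically recover the $C_t\mathbb{H}^1\cap L^2_tH^2$ regularity demanded by Definition~\ref{def2.1}; a separate regularity-propagation argument would be needed, and the paper's approach sidesteps this by running the whole construction at the $H^1$ level (with mollifiers to make the contraction close) and then taking limits.

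A minor point on uniqueness: you invoke Lemma~\ref{lem2.1*} to control a boundary contribution in the difference estimate, but in fact with $\varsigma=0$ one has $\partial_{\vec n}\Psi_1=\partial_{\vec n}\Psi_2=\eta$, hence $\partial_{\vec n}(\Psi_1-\Psi_2)=0$, and the blocking-boundary condition for the difference forces the boundary integral to vanish exactly (see the computation in Proposition~\ref{lem2.3}). No trace inequality is needed there.
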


According to Definition \ref{def4.2}, we know that on the set $\{\tau<\infty\}$,
$$\limsup_{t\rightarrow \tau}\|(u, \mathbf{c}^i)\|_{\mathbb{H}^1\times H^1}=\infty.$$
Thanks to  $\|(\nabla u, \nabla \Psi)\|_{\mathbb{L}^2\times W^{1,3+}}\leq C\|(u, \mathbf{c}^i)\|_{\mathbb{H}^1\times H^1}$, we may think that the explosion time of $\|(\nabla u, \nabla \Psi)\|_{\mathbb{L}^2\times W^{1,3+}}$ is earlier than $\|(u, \mathbf{c}^i)\|_{\mathbb{H}^1\times H^1}$. However, the following theorem tells us that the explosion time coincides.
\begin{theorem}\label{thm1} $(u, \mathbf{c}^i)$ is the maximal strong pathwise solution with lifetime $\tau$ established in above, then we have $\mathbb{P}$-a.s.
$$1_{\left\{\limsup_{t\rightarrow \tau}\|(u, \mathbf{c}^i)\|_{\mathbb{H}^1\times H^1}=\infty\right\}}=1_{\left\{\limsup_{t\rightarrow \tau}\|(\nabla u, \nabla \Psi)\|_{\mathbb{L}^2\times W^{1,3+}}=\infty\right\}}.$$
\end{theorem}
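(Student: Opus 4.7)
The plan is to establish the equality of indicator functions by proving both inclusions of the underlying events. The inclusion $\{\limsup_{t\to\tau}\|(\nabla u,\nabla\Psi)\|_{\mathbb{L}^2\times W^{1,3^+}}=\infty\}\subseteq\{\limsup_{t\to\tau}\|(u,\mathbf{c}^i)\|_{\mathbb{H}^1\times H^1}=\infty\}$ is the easy direction already foreshadowed in the paragraph preceding the theorem: $\|\nabla u\|_{\mathbb{L}^2}=\|u\|_{\mathbb{H}^1}$ by definition, and the elliptic estimate \eqref{e1.3} applied to $-\Delta\Psi=\rho$, $\partial_{\vec n}\Psi=\eta$ together with the Sobolev embedding $H^1\hookrightarrow L^{3^+}$ (valid in $d=2,3$) yields $\|\nabla\Psi\|_{W^{1,3^+}}\le C(\|\mathbf{c}^i\|_{H^1}+\|\eta\|_{H^{3/2}(\partial\mathcal{D})})$. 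So the task reduces to the reverse inclusion, which I would prove by a stopping-time argument.

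Concretely, introduce
\begin{equation*}
\xi_K:=\inf\big\{t\in[0,\tau)\colon \|\nabla u(t)\|_{\mathbb{L}^2}^2+\|\nabla\Psi(t)\|_{W^{1,3^+}}^2\ge K\big\}\wedge\tau,
\end{equation*}
and prove that for every $K,T>0$ there is a $\mathbb{P}$-a.s. finite random constant $C(K,T,\omega)$ such that
\begin{equation*}
\sup_{t\in[0,T\wedge\xi_K]}\|(u,\mathbf{c}^i)(t)\|_{\mathbb{H}^1\times H^1}^2\le C(K,T,\omega).
\end{equation*}
Given this estimate, on the event where $\|(\nabla u,\nabla\Psi)\|_{\mathbb{L}^2\times W^{1,3^+}}$ remains bounded as $t\to\tau$ some $\xi_K$ must coincide with $\tau$, so the strong norm cannot blow up at $\tau$; this contradicts the characterization of $\tau$ as the explosion time of the maximal strong solution on $\{\tau<\infty\}$ recalled in the paragraph after Theorem \ref{thm}, producing the desired a.s. identity of events.

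The bound on the velocity is free: by Poincar\'e combined with the no-slip condition, $\|u\|_{\mathbb{H}^1}\le C\|\nabla u\|_{\mathbb{L}^2}\le CK^{1/2}$ on $[0,\xi_K]$, and the noise plays no role here since only $\|\nabla u\|_{\mathbb{L}^2}$ enters the bound. The core of the argument is the pathwise $H^1$ estimate for $\mathbf{c}^i$, which I would derive by testing $\eqref{Equ1.1}_1$ with $-\Delta\mathbf{c}^i$. Integration by parts produces the boundary piece $-\int_{\partial\mathcal{D}}\partial_t\mathbf{c}^i\,\partial_{\vec n}\mathbf{c}^i\,d\mathcal{S}$; using $\varsigma=0$, so that $\partial_{\vec n}\Psi=\eta$, together with the blocking condition \eqref{e1.7} reduces this to $\tfrac{z_i}{2}\partial_t\int_{\partial\mathcal{D}}\eta(\mathbf{c}^i)^2\,d\mathcal{S}$, which is absorbed by the smallness hypothesis $\|\eta\|_{L^\infty(\partial\mathcal{D})}\le 1/(2C)$ and the trace inequality of Lemma \ref{lem2.1*}. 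The convection $\int u\cdot\nabla\mathbf{c}^i\,\Delta\mathbf{c}^i$ I would bound via $\|u\|_{L^6}\le C\|\nabla u\|_{\mathbb{L}^2}$ and the Gagliardo--Nirenberg interpolation $\|\nabla\mathbf{c}^i\|_{L^3}\le C\|\nabla\mathbf{c}^i\|_{L^2}^{1/2}\|\Delta\mathbf{c}^i\|_{L^2}^{1/2}$; the drift $a_iz_i\int\nabla\cdot(\mathbf{c}^i\nabla\Psi)\Delta\mathbf{c}^i$ through $W^{1,3^+}\hookrightarrow L^\infty$ applied to $\nabla\Psi$ and H\"older for $\mathbf{c}^i\Delta\Psi$. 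Young's inequality then absorbs fractional powers of $\|\Delta\mathbf{c}^i\|_{L^2}$ into half of the dissipation, yielding
\begin{equation*}
\tfrac{d}{dt}\|\nabla\mathbf{c}^i\|_{L^2}^2+\tfrac{a_i}{2}\|\Delta\mathbf{c}^i\|_{L^2}^2\le C\big(1+\|\nabla u\|_{\mathbb{L}^2}^4+\|\nabla\Psi\|_{W^{1,3^+}}^2\big)\big(1+\|\mathbf{c}^i\|_{H^1}^2\big),
\end{equation*}
which together with a companion $L^2$ estimate from testing by $\mathbf{c}^i$ (the boundary term $-\tfrac{a_iz_i}{2}\int_{\partial\mathcal{D}}\eta(\mathbf{c}^i)^2\,d\mathcal{S}$ is again absorbed via Lemma \ref{lem2.1*}) closes by Gronwall on $[0,T\wedge\xi_K]$.

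The main obstacle I anticipate is the convective integral in 3D, where $\mathbb{H}^1\not\hookrightarrow L^\infty$ and so $\|u\|_{L^\infty}$ is not dominated by $\|\nabla u\|_{\mathbb{L}^2}$; the interpolation workaround above converts this into a term of the form $C\|\nabla u\|_{\mathbb{L}^2}^4\|\nabla\mathbf{c}^i\|_{L^2}^2$, which is tolerable on $[0,\xi_K]$ because the coefficient is bounded by $CK^2$. A minor technical point is the rigorous justification of testing by $-\Delta\mathbf{c}^i$, which is automatic since, by Definition \ref{def2.1}, the maximal strong pathwise solution satisfies $\mathbf{c}^i(\cdot\wedge\tau)\in L^2(0,T;H^2)$, so the formal manipulation can be performed on the smooth approximants built in Section 4 and passed to the limit in the usual way.
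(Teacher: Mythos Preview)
Your proposal is correct and follows essentially the same route as the paper. The paper packages the argument as Proposition \ref{pro4.4}, defining two families of stopping times $\tau_{1,\mathcal R}$ (based on $\|(u,\mathbf c^i)\|_{\mathbb H^1\times H^1}$) and $\tau_{2,\widetilde{\mathcal R}}$ (based on $\|(\nabla u,\nabla\Psi)\|_{\mathbb L^2\times W^{1,3^+}}$) and showing their limits coincide $\mathbb P$-a.s.; the easy inequality uses the same elliptic bound you invoke, and the hard inequality is obtained exactly by rerunning the $-\Delta\mathbf c^i$ test of Lemma \ref{lem2.2} (with the boundary piece $\int_{\partial\mathcal D}\eta z_i(\mathbf c^i)^2\,d\mathcal S$ absorbed via Lemma \ref{lem2.1*}) up to the stopping time, yielding your differential inequality with the coefficient $\|\nabla u\|_{\mathbb L^2}^4+\|\nabla\Psi\|_{W^{1,3^+}}^2$. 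The only cosmetic difference is that the paper mollifies and takes expectation to get $\mathbb E\sup_{[0,\tau_{2,\widetilde{\mathcal R}}\wedge k]}\|(u,\mathbf c^i)\|_{\mathbb H^1\times H^1}^2\le C(k,\widetilde{\mathcal R})$, whereas you close Gronwall pathwise---which is legitimate here since the $\mathbf c^i$-equation carries no stochastic integral.
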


The rest of this section is used to prove Theorems \ref{thm}, \ref{thm1} and we will divide the proof into three parts. In the first part, we prove the existence of strong martingale solution, the proof consists of constructing the approximate solutions, establishing the high-order a priori estimates and the stochastic compactness, identifying the limit. In the second part, we prove the uniqueness, by using the Gy\"{o}ngy-Krylov characterization to deduce the existence of local strong pathwise solution,  extending the local solution to maximal pathwise solution, and improving the time regularity. In the third part, we establish the blow-up criterion, i.e., Theorem \ref{thm1}.

\subsection{Existence of a local martingale solution}

 In our situation, we first consider a modified system. Let $\Phi_{R}:[0,\infty)\rightarrow [0,1]$ be a $C^{\infty}$-smooth function defined as:
\begin{eqnarray*}
 \Phi_{R}(x) \!=\!\left\{\begin{array}{ll}
                  \!\!1,& \mbox{if} \ 0<x<R,  \\
                  \!\!0,& \mbox{if} \ x>2R,  \\
                \end{array}\right.
\end{eqnarray*}
where $R$ is a fixed constant.

Multiplying by $\Phi_R(\|\nabla u\|_{\mathbb{L}^2})$ in front of the terms $u\cdot \nabla u$ and $u\cdot\nabla \mathbf{c}^i$, and $\Phi_R(\|\nabla \Psi\|_{W^{1,3^+}})$ in front of the terms ${\rm div}(z_i\mathbf{c}_i \nabla \Psi)$ and $\kappa\rho\nabla \Psi$, we get the following modified system:
\begin{eqnarray}\label{M1.1}
\left\{\begin{array}{ll}
\!\!\partial_t\mathbf{c}^i+\Phi_R(\|\nabla u\|_{\mathbb{L}^2})(u\cdot \nabla )\mathbf{c}^i=a_i{\rm div}(\nabla \mathbf{c}^i+\Phi_R(\|\nabla \Psi\|_{W^{1,3^+}})z_i\mathbf{c}^i\nabla\Psi),\\~i=1,2,\cdots, m,\\
\!\!-\Delta \Psi=\sum_{i=1}^m z_i\mathbf{c}^i=\rho,\\
\!\!\partial_t u-\mu\Delta u+\Phi_R(\|\nabla u\|_{\mathbb{L}^2})(u\cdot \nabla)u-\nabla p+ \kappa\Phi_R(\|\nabla \Psi\|_{W^{1,3^+}})\rho\nabla \Psi\\ =f(u, \nabla \Psi)\frac{d\mathcal{W}}{dt},\\
\!\!\nabla\cdot u=0.\\
\end{array}\right.
\end{eqnarray}

We list the definition of martingale solution of system \eqref{M1.1}, the solution is weak in probability sense but strong in PDEs sense.
\begin{definition}[global martingale solution]\label{def4.3} Let $\lambda$ be a Borel probability measure on space $H^1(\mathcal{D})\times \mathbb{H}^1(\mathcal{D})$ with $$\int_{H^1(\mathcal{D})\times \mathbb{H}^1(\mathcal{D})}|x|^p d\lambda\leq C,$$ for a constant $C$. We say $(\Omega, \mathcal{F},\{\mathcal{F}_t\}_{t\geq 0},\mathbb{P}, u, \mathbf{c}^i, \mathcal{W})$ is a martingale solution to system \eqref{M1.1} with the initial data $\lambda$ if

i. $(\Omega,\mathcal{F},\{\mathcal{F}_t\}_{t\geq 0},\mathbb{P})$ is a stochastic basis with a complete right-continuous filtration, $\mathcal{W}$ is a Wiener process relative to the filtration $\mathcal{F}_t$;

ii. the process $u$ is an $\mathbb{H}^1$-valued $\mathcal{F}_t$-progressively measurable satisfying for all $p\in[2,\infty)$
$$u\in L^p(\Omega; L^\infty(0,T; \mathbb{H}^1)\cap L^2(0,T;H^2));$$

iii. the processes $\mathbf{c}^i, i=1,\cdots, m$ are $H^1$-valued $\mathcal{F}_t$-progressively measurable satisfying for all $p\in [2,\infty), j\in[1,\infty)$
$$\mathbf{c}^i\geq 0,~\mbox{a.e.} ~{\rm and} ~\mathbf{c}^i\in L^p(\Omega; L^\infty(0,T; H^1\cap L^j)\cap L^2(0,T;H^2)),$$
 and $\Psi$ is an $H^3$-valued $\mathcal{F}_t$-progressively measurable process with
$$\Psi\in L^p(\Omega; L^\infty(0,T; H^3)\cap L^2(0,T;H^4));$$

iv. the initial law $\lambda=\mathbb{P}\circ (u_0, \mathbf{c}^i_0)^{-1}$;

v. it holds $\mathbb{P}$-a.s.
\begin{eqnarray*}
&&\mathbf{c}^i(t)=\mathbf{c}^i(0)-\int_{0}^{t}\Phi_R(\|\nabla u\|_{\mathbb{L}^2})(u\cdot \nabla )\mathbf{c}^ids\\&&\quad\qquad+\int_{0}^{t}a_i{\rm div}(\nabla \mathbf{c}^i+\Phi_R(\|\nabla \Psi\|_{W^{1,3^+}})z_i\mathbf{c}^i\nabla\Psi)ds,~ i=1,2,\cdots, m,\\
&&u(t)=u(0)+\mu\int_{0}^{t}P\Delta uds-\int_{0}^{t}P\Phi_R(\|\nabla u\|_{\mathbb{L}^2})(u\cdot \nabla)uds\nonumber \\ &&\quad\qquad- \int_{0}^{t}P\kappa\Phi_R(\|\nabla \Psi\|_{W^{1,3^+}})\rho\nabla \Psi ds+\int_{0}^{t}Pf(u, \nabla\Psi)d\mathcal{W},\\
&&-\Delta \Psi=\sum_{i=1}^m z_i\mathbf{c}^i=\rho, ~\mbox{a.e.}
\end{eqnarray*}
 for all $t>0$.
\end{definition}

\subsubsection{Approximate solutions and uniform estimates}

In this subsection, we are going to find a sequence of smooth approximate solutions $(u_n, \mathbf{c}_n^i, \Psi_n)$ satisfying the following approximate system:
\begin{eqnarray}\label{M1.2}
\left\{\begin{array}{ll}
\!\!\!\partial_t\mathbf{c}^i_n+\Phi_R(\|\nabla u_n\|_{\mathbb{L}^2})(u_n\cdot \nabla )\mathcal{J}_{\varepsilon(n)}\mathbf{c}^i_n\\=a_i{\rm div}(\nabla \mathbf{c}_n^i+\Phi_R(\|\nabla \Psi_n\|_{W^{1,3^+}})z_i\mathbf{c}_n^i\nabla\mathcal{J}_{\varepsilon(n)}\Psi_n),
~i=1,2,\cdots, m,\\
\!\!\!-\Delta \Psi_n=\sum_{i=1}^m z_i\mathbf{c}_n^i=\rho_n,\\
\!\!\!\partial_t u_n-\mu P\Delta u_n+\Phi_R(\|\nabla u_n\|_{\mathbb{L}^2})P(u_n\cdot \nabla)\mathcal{J}_{\varepsilon(n)}u_n\\+P\kappa\Phi_R(\|\nabla \Psi_n\|_{W^{1,3^+}})\rho_n\nabla \mathcal{J}_{\varepsilon(n)}\Psi_n
 =Pf(u_n, \nabla\Psi_n)\frac{d\mathcal{W}}{dt},\\
\end{array}\right.
\end{eqnarray}
where the constant $3^+:=3+\delta$ for small $\delta>0$, $\mathcal{J}_{\varepsilon(n)}$ is the Friedrich mollifiers with $\varepsilon(n)=\frac{1}{n}$, $n\in \mathbb{R}^+$. For more properties and the construction of such operators, see Lemma \ref{lem4.4} and Remark \ref{rem4.1}.

We linearize system \eqref{M1.2} to obtain
\begin{eqnarray}\label{M1.3}
\left\{\begin{array}{ll}
\!\!\!\partial_t\mathbf{c}^i_n+\Phi_R(\|\nabla \ddot{u}_n\|_{\mathbb{L}^2})(\ddot{u}_n\cdot \nabla )\mathcal{J}_{\varepsilon(n)}\ddot{\mathbf{c}}^i_n\\=a_i{\rm div}(\nabla \mathbf{c}_n^i+\Phi_R(\|\nabla \ddot{\Psi}_n\|_{W^{1,3^+}})z_i\ddot{\mathbf{c}}_n^i\nabla\mathcal{J}_{\varepsilon(n)}\ddot{\Psi}_n),~i=1,2,\cdots, m,\\
\!\!\!-\Delta \ddot{\Psi}_n=\sum_{i=1}^m z_i\ddot{\mathbf{c}}_n^i=\ddot{\rho}_n,\\
\!\!\!\partial_t u_n-\mu P\Delta u_n+\Phi_R(\|\nabla \ddot{u}_n\|_{\mathbb{L}^2})P(\ddot{u}_n\cdot \nabla)\mathcal{J}_{\varepsilon(n)}\ddot{u}_n\\+P\kappa\Phi_R(\|\nabla \ddot{\Psi}_n\|_{W^{1,3^+}})\ddot{\rho}_n\nabla \mathcal{J}_{\varepsilon(n)}\ddot{\Psi}_n
 =P\mathcal{J}_{\varepsilon(n)}f(\ddot{u}_n,\nabla\ddot{\Psi}_n)\frac{d\mathcal{W}}{dt},\\
\end{array}\right.
\end{eqnarray}
with the boundary conditions:
\begin{align}\label{e1.7*}
\left\{\begin{array}{ll}
\!\!\!(\partial_{\vec{n}}\mathbf{c}^i+z_i\ddot{\mathbf{c}}^i\partial_{\vec{n}}\ddot{\Psi})|_{\partial \mathcal{D}}=0,\\
\!\!\!\partial_{\vec{n}}\ddot{\Psi}|_{\partial \mathcal{D}}=\eta,\\
\!\!\!u|_{\partial \mathcal{D}}=0,
\end{array}\right.
\end{align}
and the initial data $(u_0, \mathbf{c}^i_0)$, where $(\ddot{u}, \ddot{\mathbf{c}}^i, \ddot{\Psi})$ is the known functions belonging to space $X\!:=\!X_1\times X_2$,
\begin{align*}
&X_1:=\left\{u:u\in L^p_\omega (L_t^\infty \mathbb{H}^1\cap L^2_t H^2), u|_{t=0}=u_0\right\},\\
&X_2:=\left\{(\mathbf{c}^i,\Psi):(\mathbf{c}^i,\Psi)\in L^p_\omega (L_t^\infty (H^1\times H^3)\cap L^2_t(H^2\times H^4)), \mathbf{c}^i|_{t=0}=\mathbf{c}^i_0\right\},
\end{align*}
for any $p\in [2,\infty)$.

\begin{remark} We add the truncation operator $\Phi_R$ in front of the nonlinear term $\ddot{\rho}_n\nabla \mathcal{J}_{\varepsilon(n)}\ddot{\Psi}_n$  only to close the contraction estimate of mapping $\mathcal{M}$ and not get the a priori estimates.
\end{remark}

\begin{remark} According to the definition of stochastic integral, we have to understand the stochastic integral $\int_{0}^{t}\mathcal{J}_{\varepsilon(n)}f(\ddot{u}_n,\nabla\ddot{\Psi}_n)d\mathcal{W}$ in the following form:
$$\int_{0}^{t}\mathcal{J}_{\varepsilon(n)}f(\ddot{u}_n,\nabla\ddot{\Psi}_n)d\mathcal{W}=\sum_{k\geq 1}\int_{0}^{t}\mathcal{J}_{\varepsilon(n)}(f(\ddot{u}_n,\nabla\ddot{\Psi}_n)e_k) d\beta_k,$$
which is an $H^k$-valued square integrable martingale for $k\geq 2$.
\end{remark}

Observe that the linear stochastic system \eqref{M1.3}-\eqref{e1.7*} with the Neumann boundary condition is perturbed by an additive noise. The solvability of the stochastic system is characterized in the following auxiliary proposition.
\begin{proposition}\label{p1} Suppose the functions $\ddot{u}, \ddot{\mathbf{c}}^i, \ddot{\Psi}$ belong to $X$, then there exists a unique solution $(u, \mathbf{c}^i, \Psi)\in X$ to system \eqref{M1.3}-\eqref{e1.7*}.
\end{proposition}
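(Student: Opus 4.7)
The key observation is that once the input $(\ddot u,\ddot{\mathbf{c}}^i,\ddot\Psi)\in X$ is frozen, system \eqref{M1.3}--\eqref{e1.7*} is fully linear in the unknowns $(u,\mathbf{c}^i,\Psi)$ and, moreover, the three equations decouple: the $\mathbf{c}^i$-equation is a deterministic linear parabolic problem (pathwise in $\omega$), the $u$-equation is a linear stochastic Stokes system with purely additive noise (since $f$ is evaluated at the hat variables), and $\Psi$ is recovered from $\mathbf{c}^i$ via the elliptic Neumann problem. So the plan is to solve each equation separately by standard linear theory and then assemble the triple.

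\emph{Step 1: The $\mathbf{c}^i$-equation.} Rewrite the first equation of \eqref{M1.3} as
\begin{equation*}
\partial_t \mathbf{c}^i - a_i \Delta \mathbf{c}^i = F^i(t,x), \qquad \partial_{\vec n}\mathbf{c}^i\big|_{\partial\mathcal{D}} = -z_i \ddot{\mathbf{c}}^i \eta,
\end{equation*}
where $F^i$ collects the convective and drift contributions built from the hat variables. The truncations $\Phi_R$, the smoothing by $\mathcal{J}_{\varepsilon(n)}$ and the inclusion $(\ddot u,\ddot{\mathbf{c}}^i,\ddot\Psi)\in X$ guarantee that $F^i\in L^p_\omega L^2_t L^2$ pathwise, while the boundary datum $-z_i\ddot{\mathbf{c}}^i\eta$ lies in $L^p_\omega L^\infty_t H^{3/2}(\partial\mathcal{D})$ thanks to $\ddot{\mathbf{c}}^i \in L^\infty_t H^1$ and $\eta\in H^{5/2}(\partial\mathcal{D})$. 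Lifting the boundary term by the solution of a stationary Neumann problem reduces the problem to a homogeneous one, which we solve by a Galerkin scheme in the Neumann--Laplacian eigenbasis. Energy estimates at the $H^1$ and $H^2$ levels close the bound $\|\mathbf{c}^i\|_{L^\infty_t H^1\cap L^2_t H^2}\le C(\|\mathbf{c}_0^i\|_{H^1},\|F^i\|_{L^2_t L^2},\|\ddot{\mathbf{c}}^i\eta\|_{L^2_t H^{3/2}(\partial\mathcal{D})})$, and the $L^j$-bound for arbitrary $j\in[1,\infty)$ follows by testing with $|\mathbf{c}^i|^{j-2}\mathbf{c}^i$. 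Taking the $L^p(\Omega)$-norm transfers all estimates into $X_2$; progressive measurability of $\mathbf{c}^i$ is inherited from the Galerkin approximants, whose coefficients solve linear ODEs driven by progressively measurable data.

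\emph{Step 2: The $u$-equation.} Once the hat variables are fixed, the equation for $u$ reads
\begin{equation*}
du + \mu A u\,dt = G(t)\,dt + \Sigma(t)\,d\mathcal{W}, \qquad u(0)=u_0,
\end{equation*}
with $A=-P\Delta$, $G\in L^p_\omega L^2_t \mathbb{L}^2$ adapted (by the truncations and the mollifier), and $\Sigma=P\mathcal{J}_{\varepsilon(n)} f(\ddot u,\nabla\ddot\Psi)$. By assumption \eqref{as3} and the smoothing property in Lemma \ref{lem4.4}, $\Sigma\in L^p_\omega L^2_t L_2(\mathcal{H};\mathbb{H}^1)$. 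This is a linear stochastic Stokes equation with additive noise, solved by a Galerkin approximation in the Stokes eigenbasis $\{\psi_k\}$. The It\^o formula applied to $\|u\|_{\mathbb{H}^1}^2$ together with the Burkholder--Davis--Gundy inequality and Gronwall yields the desired bound in $L^p_\omega(L^\infty_t \mathbb{H}^1\cap L^2_t H^2)$, hence $u\in X_1$.

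\emph{Step 3: Definition of $\Psi$ and uniqueness.} Define $\Psi$ as the solution of $-\Delta\Psi = \sum_i z_i\mathbf{c}^i$ with $\partial_{\vec n}\Psi=\eta$; the elliptic estimate \eqref{e1.3} combined with $\mathbf{c}^i\in L^\infty_t H^1\cap L^2_t H^2$ and $\eta\in H^{5/2}(\partial\mathcal{D})$ gives $\Psi\in L^p_\omega(L^\infty_t H^3\cap L^2_t H^4)$. Uniqueness of the triple $(u,\mathbf{c}^i,\Psi)$ is immediate from linearity: the difference of two candidate solutions satisfies the corresponding homogeneous system with zero initial data and zero forcing, and the same energy estimates (for $\mathbf{c}^i$ and $u$) and elliptic estimate (for $\Psi$) force it to vanish.

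The main technical obstacle is the inhomogeneous Neumann datum $-z_i\ddot{\mathbf{c}}^i\eta$ in the $\mathbf{c}^i$-equation, which prevents a direct application of standard $L^2$-based parabolic theory: one must either lift the boundary term carefully (while preserving $\mathcal{F}_t$-progressive measurability through the $\omega$-dependence of $\ddot{\mathbf{c}}^i$) or work in a weak formulation that absorbs the trace contribution into the energy identity. The remaining ingredients are routine linear parabolic and linear SPDE theory.
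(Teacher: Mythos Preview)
Your argument is correct and close in spirit to the paper's, but the paper handles the velocity equation differently. Rather than solving the linear stochastic Stokes problem directly by a Galerkin scheme plus It\^o--BDG estimates as you do, the paper invokes the trick from \cite{Kim1}: since the noise is additive (evaluated at the frozen variables $\ddot u,\nabla\ddot\Psi$), one subtracts the stochastic convolution to convert the full system into a \emph{random} (pathwise deterministic) system, solves that for a.s.\ $\omega$ using off-the-shelf deterministic $L^p$--$L^q$ parabolic regularity (\cite{Denk}, \cite[Lemma~2.1]{bothe}), then separately verifies measurability of the resulting solution map, and finally obtains the $p$-th moment bounds by the estimates of Lemma~\ref{lem4.1}. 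Your approach is more self-contained in that adaptedness comes for free from the Galerkin coefficients, at the price of redoing the parabolic energy estimates rather than citing them; the paper's approach treats the $u$- and $\mathbf{c}^i$-equations on equal footing as random PDEs but must argue measurability as an extra step.

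One small imprecision: the boundary datum $-z_i\ddot{\mathbf c}^i\eta$ lies only in $H^{1/2}(\partial\mathcal D)$ (the trace of $\ddot{\mathbf c}^i\in H^1$ is in $H^{1/2}$, and multiplication by the smooth $\eta$ preserves this), not $H^{3/2}$. This is harmless, since $H^{1/2}$ is exactly the regularity needed for the $L^2_tH^2$ bound on $\mathbf c^i$, but you should correct the stated exponent. Also, the $L^j$ bound you derive is not part of the space $X_2$ as defined in the paper, so that step is superfluous here (it becomes relevant later, in Lemma~\ref{lem2.1}).
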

\begin{proof} Inspired by \cite{Kim1}, we first convert the stochastic system into a random system. For the random system, using the deterministic method, we could show that for a.s. $\omega$, there exists a unique strong solution $$(u, \mathbf{c}^i, \Psi)\in L^\infty(0,T;\mathbb{H}^1\times (H^1\times H^3))\cap L^2(0,T;H^2\times(H^2\times H^4)),$$
see \cite{Denk} and \cite[Lemma 2.1]{bothe} for the details of proof. Then,  similar to \cite{Kim1}, we could show that the solution is measurable. At last, we could establish the $p$-th moment estimate by same argument of the following lemma, i.e., Lemma \ref{lem4.1}.
\end{proof}

Define the mapping
$$\mathcal{M}:X\longmapsto X,~ (\ddot{u}_n, \ddot{\mathbf{c}}_n^i, \ddot{\Psi}_n)\longmapsto (u_n, \mathbf{c}_n^i, \Psi_n).$$
By Proposition \ref{p1}, we know that the mapping $\mathcal{M}$ is well-defined.

Next, we show that the mapping $\mathcal{M}$ is a contraction from $X$ into $X$. Once these properties are verified, the existence and uniqueness of approximate solutions of system \eqref{M1.2} will follows.

\begin{lemma}\label{lem4.1} $\mathcal{M}$ is a mapping from $X$ into itself.
\end{lemma}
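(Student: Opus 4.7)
The plan is to establish the $L^p$-moment bounds that place $(u_n, \mathbf{c}_n^i, \Psi_n) := \mathcal{M}(\ddot u_n, \ddot{\mathbf{c}}_n^i, \ddot\Psi_n)$ in the space $X$, given that existence and uniqueness for the linearized system \eqref{M1.3}--\eqref{e1.7*} is already supplied by Proposition \ref{p1}. Since $-\Delta\Psi_n = \rho_n$ together with $\partial_{\vec n}\Psi_n = \eta \in H^{5/2}(\partial\mathcal{D})$, the elliptic estimate \eqref{e1.3} promotes any $H^j$ bound on $\mathbf{c}_n^i$ into an $H^{j+2}$ bound on $\Psi_n$ for $j=1,2$, so the job reduces to showing $u_n \in L^p(\Omega; L^\infty_t\mathbb{H}^1\cap L^2_t H^2)$ and $\mathbf{c}_n^i \in L^p(\Omega; L^\infty_t H^1\cap L^2_t H^2)$. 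The two decisive tools used throughout are the cutoff $\Phi_R\le 1$, which additionally enforces $\|\nabla\ddot u_n\|_{\mathbb{L}^2}\le 2R$ and $\|\nabla\ddot\Psi_n\|_{W^{1,3^+}}\le 2R$ on the respective supports, and the mollifier $\mathcal{J}_{\varepsilon(n)}$, which by Lemma \ref{lem4.4} supplies any required smoothness at the price of an $n$-dependent constant.

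For $u_n$, which solves a stochastic Stokes equation with \emph{additive} noise $P\mathcal{J}_{\varepsilon(n)}f(\ddot u_n, \nabla\ddot\Psi_n)\,d\mathcal{W}$, I would apply It\^o's formula to $\|\nabla u_n\|_{\mathbb{L}^2}^2 = (Au_n, u_n)$ with $A=-P\Delta$ to produce the dissipation $2\mu\|P\Delta u_n\|_{\mathbb{L}^2}^2$. The truncated convection and Coulomb drifts are handled by H\"older, Sobolev embedding, and Young's inequality, absorbing a small fraction of the dissipation and leaving terms of the form $C(R,n)\|\nabla u_n\|_{\mathbb{L}^2}^2$ plus sources depending only on the known $\ddot{}$-data. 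The It\^o correction is bounded by $\|\mathcal{J}_{\varepsilon(n)}f(\ddot u_n,\nabla\ddot\Psi_n)\|_{L_2(\mathcal{H};\mathbb{H}^1)}^2$ using the uniform boundedness of $\mathcal{J}_{\varepsilon(n)}$ and assumption \eqref{as3}, while Burkholder-Davis-Gundy controls the stochastic supremum in terms of $\mathbb{E}\int_0^T(\|\ddot u_n\|_{\mathbb{H}^1}^2+\|\nabla\ddot\Psi_n\|_{H^1}^2)\,ds$, which is finite since $(\ddot u_n,\ddot{\mathbf{c}}_n^i,\ddot\Psi_n)\in X$. A standard Gronwall argument, followed by raising to the $p$-th power and taking expectation, closes the $\mathbb{H}^1\cap L^2_tH^2$ estimate.

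The equation for $\mathbf{c}_n^i$ is deterministic once the inputs are fixed; it is a linear parabolic problem with source $-\Phi_R(\ddot u_n\cdot\nabla)\mathcal{J}_{\varepsilon(n)}\ddot{\mathbf{c}}_n^i + a_i\Phi_R z_i\,{\rm div}(\ddot{\mathbf{c}}_n^i\nabla\mathcal{J}_{\varepsilon(n)}\ddot\Psi_n)$ and the inhomogeneous Neumann datum $\partial_{\vec n}\mathbf{c}_n^i = -z_i\ddot{\mathbf{c}}_n^i\partial_{\vec n}\ddot\Psi_n$ coming from the blocking condition. Pathwise testing with $\mathbf{c}_n^i$ gives the $L^2$-level bound, after which testing with $-\Delta\mathbf{c}_n^i$ produces the dissipation $a_i(\|\nabla\mathbf{c}_n^i\|_{L^2}^2+\|\Delta\mathbf{c}_n^i\|_{L^2}^2)$; the interior drifts are benign thanks to $\Phi_R\le 1$, the $2R$ bounds, and the extra smoothness from $\mathcal{J}_{\varepsilon(n)}$. \emph{The main obstacle} I anticipate is the boundary integral generated by the blocking condition: after substituting $\partial_{\vec n}\mathbf{c}_n^i = -z_i\ddot{\mathbf{c}}_n^i\partial_{\vec n}\ddot\Psi_n$, one picks up a surface integral involving $\ddot{\mathbf{c}}_n^i\partial_{\vec n}\ddot\Psi_n$, which I would control by the trace inequality of Lemma \ref{lem2.1*}. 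Its $L^p(\partial\mathcal{D})$-norm is bounded in terms of $\|\ddot{\mathbf{c}}_n^i\|_{H^1}\|\ddot\Psi_n\|_{H^2}$, both finite on the support of $\Phi_R$; a Young splitting absorbs a small multiple of $\|\Delta\mathbf{c}_n^i\|_{L^2}^2$ into the diffusion, and Gronwall concludes the estimate. Measurability in $\omega$ is inherited from Proposition \ref{p1}, so that $(u_n, \mathbf{c}_n^i, \Psi_n)\in X$ as required.
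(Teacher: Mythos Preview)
Your proposal is correct and follows essentially the same route as the paper: apply It\^o's formula to $\|\nabla u_n\|_{\mathbb{L}^2}^2$, test $\eqref{M1.3}_1$ with $-\Delta\mathbf{c}_n^i$, absorb the drift terms into the dissipation via H\"older/Young and the mollifier bound of Lemma~\ref{lem4.4}, and close with BDG and assumption~\eqref{as3}. Two minor inaccuracies: since in \eqref{M1.3} the drifts are \emph{fully} in the $\ddot{}$-variables, no $\|\nabla u_n\|_{\mathbb{L}^2}^2$ terms appear on the right and no Gronwall is actually needed; and the boundary integral $\int_{\partial\mathcal{D}} z_i\ddot{\mathbf{c}}_n^i\mathbf{c}_n^i\eta\,d\mathcal{S}$ (from $\partial_{\vec n}\ddot\Psi_n=\eta$) carries no $\Phi_R$ factor, so its control comes from $\|\ddot{\mathbf{c}}_n^i\|_{H^1}$ via $X$-membership and the trace embedding $H^1(\mathcal{D})\hookrightarrow L^2(\partial\mathcal{D})$, and it is absorbed into the left-hand supremum of $\|\nabla\mathbf{c}_n^i\|_{L^2}^2$ rather than into the time-integrated $\|\Delta\mathbf{c}_n^i\|_{L^2}^2$.
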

\begin{proof}Using the It\^{o} formula to the function $\frac{1}{2}\|\nabla u_n\|_{\mathbb{L}^2}^2$, it holds that
\begin{align}\label{e4.5*}
\frac{1}{2}d\|\nabla u_n\|^2_{\mathbb{L}^2}&+\mu\|\Delta u_n\|^2_{\mathbb{L}^2}dt=\Phi_R(\|\nabla \ddot{u}_n\|_{\mathbb{L}^2})(\ddot{u}_n\cdot \nabla \mathcal{J}_{\varepsilon(n)}\ddot{u}_n, \Delta u_n)dt\nonumber\\&-\Phi_R(\|\nabla \ddot{\Psi}_n\|_{W^{1,3^+}})\kappa(\nabla(\ddot{\rho}_n\nabla \mathcal{J}_{\varepsilon(n)}\ddot{\Psi}_n), \nabla u_n)dt\nonumber\\ &+(\nabla P\mathcal{J}_{\varepsilon(n)}f(\ddot{u}_n, \nabla \ddot{\Psi}_n), \nabla u_n)d\mathcal{W}\nonumber\\&+\frac{1}{2}\|\nabla P \mathcal{J}_{\varepsilon(n)}f(\ddot{u}_n, \nabla \ddot{\Psi}_n)\|_{L_2(\mathcal{H};\mathbb{L}^2)}^2dt.
\end{align}
Taking inner product with $-\Delta \mathbf{c}_n^i$, we get
\begin{align}\label{e4.6*}
&\frac{1}{2}d\left(\|\nabla \mathbf{c}_n^i\|^2_{L^2}+\int_{\partial\mathcal{D}}z_i\ddot{\mathbf{c}}_n^i\mathbf{c}_n^i\eta d\mathcal{S}\right)+a_i\|\Delta \mathbf{c}_n^i\|^2_{L^2}dt\nonumber\\
&=(\Phi_R(\|\nabla \ddot{u}_n\|_{\mathbb{L}^2})(\ddot{u}_n\cdot \nabla )\mathcal{J}_{\varepsilon(n)}\ddot{\mathbf{c}}_n^i,  \Delta\mathbf{c}_n^i)dt\\
&\quad-\Phi_R(\|\nabla \ddot{\Psi}_n\|_{W^{1,3^+}})a_i({\rm div}(z_i\ddot{\mathbf{c}}_n^i\nabla\mathcal{J}_{\varepsilon(n)}\ddot{\Psi}_n), \Delta\mathbf{c}_n^i)dt.\nonumber
\end{align}
We are going to control all terms on the right-hand sides of \eqref{e4.5*}, \eqref{e4.6*}. Using the H\"{o}lder inequality, the Schwartz inequality and Lemma \ref{lem4.4}, we obtain
\begin{align*}
&\Phi_R(\|\nabla \ddot{u}_n\|_{\mathbb{L}^2})|(\ddot{u}_n\cdot \nabla \mathcal{J}_{\varepsilon(n)}\ddot{u}_n, \Delta u_n)|\\
&\qquad\qquad\leq \frac{\mu}{4}\|\Delta u_n\|^2_{\mathbb{L}^2}+C\Phi_R^2\|\ddot{u}_n\|_{\mathbb{H}^1}^2\|\nabla \mathcal{J}_{\varepsilon(n)}\ddot{u}_n\|_{\mathbb{H}^1}^2\\
&\qquad\qquad\leq \frac{\mu}{4}\|\Delta u_n\|^2_{\mathbb{L}^2}+Cn^2\Phi_R^2\|\ddot{u}_n\|_{\mathbb{H}^1}^2\|\ddot{u}_n\|_{\mathbb{H}^1}^2,\\
&\Phi_R(\|\nabla \ddot{\Psi}_n\|_{W^{1,3^+}})|-\kappa(\nabla(\ddot{\rho}_n\nabla \mathcal{J}_{\varepsilon(n)}\ddot{\Psi}_n), \nabla u_n)|\\
& \qquad\qquad\leq \frac{\mu}{4}\|\Delta u_n\|^2_{\mathbb{L}^2}+C\Phi_R^2\|\ddot{\rho}_n\|_{H^1}^2\|\nabla \mathcal{J}_{\varepsilon(n)}\ddot{\Psi}_n\|_{H^1}^2\\
&\qquad\qquad\leq \frac{\mu}{4}\|\Delta u_n\|^2_{\mathbb{L}^2}+C\Phi_R^2\|\ddot{\rho}_n\|_{H^1}^2\|\nabla \ddot{\Psi}_n\|_{H^1}^2,\\
&\Phi_R(\|\nabla \ddot{u}_n\|_{\mathbb{L}^2})|(\ddot{u}_n\cdot \nabla \mathcal{J}_{\varepsilon(n)}\ddot{\mathbf{c}}_n^i,  \Delta\mathbf{c}_n^i)|\\
& \qquad\qquad\leq  \frac{a_i}{4}\|\Delta \mathbf{c}^i_n\|^2_{L^2}+C\Phi_R^2\|\ddot{u}_n\|_{\mathbb{H}^1}^2\|\nabla \mathcal{J}_{\varepsilon(n)}\ddot{\mathbf{c}}_n^i\|_{H^1}^2\\
&\qquad\qquad\leq \frac{a_i}{4}\|\Delta \mathbf{c}^i_n\|^2_{L^2}+Cn^2\Phi_R^2\|\ddot{u}_n\|_{\mathbb{H}^1}^2\|\ddot{\mathbf{c}}_n^i\|_{H^1}^2,\\
&\Phi_R(\|\nabla \ddot{\Psi}_n\|_{W^{1,3^+}})|-a_i({\rm div}(z_i\ddot{\mathbf{c}}_n^i\nabla\mathcal{J}_{\varepsilon(n)}\ddot{\Psi}_n), \Delta\mathbf{c}_n^i)|
\\& \qquad\qquad\leq \frac{a_i}{4}\|\Delta \mathbf{c}^i_n\|^2_{L^2}+C\Phi_R^2\|\ddot{\mathbf{c}}_n^i\|_{H^1}^2\|\nabla \ddot{\Psi}_n\|_{W^{1,3^+}}^2.
\end{align*}
The H\"{o}lder inequality and the trace embedding $H^1(\mathcal{D})\hookrightarrow L^2(\partial\mathcal{D})$ yield
\begin{align*}
\left|\int_{\partial\mathcal{D}}z_i\ddot{\mathbf{c}}_n^i\mathbf{c}_n^i\eta d\mathcal{S}\right|&\leq C\|\eta\|_{L^\infty(\partial\mathcal{D})}\|\ddot{\mathbf{c}}_n^i\|_{L^2(\partial\mathcal{D})}\|\mathbf{c}_n^i\|_{L^2(\partial\mathcal{D})}\\
&\leq \frac{1}{4}\|\mathbf{c}_n^i\|_{H^1}^2+C\|\eta\|_{L^\infty(\partial\mathcal{D})}^2\|\ddot{\mathbf{c}}_n^i\|_{H^1}^2.
\end{align*}
Applying the Burkholder-Davis-Gundy inequality to the stochastic term, and by assumption \eqref{as3}, Lemma \ref{lem4.4}, we build
\begin{align*}
&\mathbb{E}\left(\sup_{t\in[0,T]}\left|\int_{0}^{t}(\nabla P\mathcal{J}_{\varepsilon(n)}f(\ddot{u}_n, \nabla \ddot{\Psi}_n), \nabla u_n)d\mathcal{W}\right|^p\right) \\
&\leq \frac{1}{2}\mathbb{E}\left(\sup_{t\in[0,T]}\|\nabla u_n\|_{\mathbb{L}^2}^{2p}\right)+C\mathbb{E}\left(\int_{0}^{T}\|\nabla \ddot{u}_n\|_{\mathbb{L}^2}^2+\| \ddot{\mathbf{c}}^i_n\|_{L^2}^2dt\right)^p\\
&\leq \frac{1}{2}\mathbb{E}\left(\sup_{t\in[0,T]}\|\nabla u_n\|_{\mathbb{L}^2}^{2p}\right)+CT^{p}\mathbb{E}\left[\sup_{t\in [0,T]}(\|\nabla \ddot{u}_n\|_{\mathbb{L}^2}^{2p}+\| \ddot{\mathbf{c}}^i_n\|_{L^2}^{2p})\right].
\end{align*}
For the last term in \eqref{e4.5*}, assumption \eqref{as3} gives
\begin{align*}
\frac{1}{2}\|\nabla \mathcal{J}_{\varepsilon(n)}f(\ddot{u}_n, \nabla \ddot{\Psi}_n)\|_{L_2(\mathcal{H};L^2)}^2\leq \frac{\ell_3}{2}(\|\ddot{u}_n\|_{\mathbb{H}^1}^2+ \|\ddot{\Psi}_n\|_{H^2}^2).
\end{align*}

Combining all the estimates, integrating of time $t$ and then taking supremum, power $p$, expectation, we complete the proof.
\end{proof}
\begin{lemma}  $\mathcal{M}$ is a contraction mapping.
\end{lemma}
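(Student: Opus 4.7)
The plan is to mimic the proof of Lemma \ref{lem4.1} but applied to the differences. Take two inputs $(\ddot{u}_n^{(j)}, \ddot{\mathbf{c}}_n^{i,(j)}, \ddot{\Psi}_n^{(j)})\in X$, $j=1,2$, and let $(u_n^{(j)}, \mathbf{c}_n^{i,(j)}, \Psi_n^{(j)}) = \mathcal{M}(\ddot{u}_n^{(j)}, \ddot{\mathbf{c}}_n^{i,(j)}, \ddot{\Psi}_n^{(j)})$. Denote the differences by $\hat{u}=u_n^{(1)}-u_n^{(2)}$, $\hat{\mathbf{c}}^i=\mathbf{c}_n^{i,(1)}-\mathbf{c}_n^{i,(2)}$, $\hat{\Psi}=\Psi_n^{(1)}-\Psi_n^{(2)}$, and analogously $\hat{\ddot{u}}, \hat{\ddot{\mathbf{c}}}^i, \hat{\ddot{\Psi}}$ for the inputs. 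Subtracting the two copies of the linearized system \eqref{M1.3}, one obtains linear equations for $(\hat{u}, \hat{\mathbf{c}}^i, \hat{\Psi})$ whose source terms are differences of the nonlinear quantities of the \emph{inputs} and of the noise operator. Note that the elliptic regularity used in Section 3 gives $\|\hat{\ddot{\Psi}}\|_{H^k} \le C \sum_i \|\hat{\ddot{\mathbf{c}}}^i\|_{H^{k-2}}$ (for $k\ge 2$, since here $\varsigma=0$ implies $\partial_{\vec n}\hat{\ddot\Psi}|_{\partial \mathcal{D}}=0$), so all estimates involving $\hat{\ddot{\Psi}}$ can be reduced to $\hat{\ddot{\mathbf{c}}}^i$.

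The main analytic step is to apply the It\^o formula to $\tfrac12\|\nabla \hat{u}\|_{\mathbb L^2}^2$ and, in parallel, to test the difference of the concentration equations against $-\Delta \hat{\mathbf{c}}^i$. As in Lemma \ref{lem4.1}, the boundary condition for $\hat{\mathbf{c}}^i$ produces an integral of the form $\int_{\partial\mathcal{D}} z_i(\ddot{\mathbf c}_n^{i,(1)}\partial_{\vec n}\ddot\Psi_n^{(1)}-\ddot{\mathbf c}_n^{i,(2)}\partial_{\vec n}\ddot\Psi_n^{(2)})\hat{\mathbf c}^i\,d\mathcal{S}$; I will split this into $\hat{\ddot{\mathbf c}}^i\,\partial_{\vec n}\ddot\Psi_n^{(1)}+\ddot{\mathbf c}_n^{i,(2)}\partial_{\vec n}\hat{\ddot \Psi}$ and bound it by the trace embedding $H^1(\mathcal{D})\hookrightarrow L^2(\partial\mathcal{D})$ exactly as in the previous lemma (the factor $\eta$ no longer appears because the inhomogeneity cancels in the difference, so the constraint $\|\eta\|_{L^\infty}$ is not used here).

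The terms involving the truncation $\Phi_R$ are the delicate ones. For every nonlinearity $N(\ddot v)$ (convection, Coulomb force, mollified transport in the concentration equation) written as $\Phi_R(\|A\ddot v\|)\,B(\ddot v,\ddot v)$ with $A$ linear and $B$ bilinear, I write
\begin{align*}
\Phi_R(\|A\ddot v^{(1)}\|)B(\ddot v^{(1)},\ddot v^{(1)}) - \Phi_R(\|A\ddot v^{(2)}\|)B(\ddot v^{(2)},\ddot v^{(2)})
&= \bigl(\Phi_R(\|A\ddot v^{(1)}\|)-\Phi_R(\|A\ddot v^{(2)}\|)\bigr)B(\ddot v^{(1)},\ddot v^{(1)}) \\
&\quad+\Phi_R(\|A\ddot v^{(2)}\|)\bigl(B(\hat{\ddot v},\ddot v^{(1)})+B(\ddot v^{(2)},\hat{\ddot v})\bigr).
\end{align*}
The first summand is controlled via the Lipschitz continuity of $\Phi_R$ (bounded derivative) together with the indicator-like support of $\Phi_R$, so that $|\Phi_R(\|A\ddot v^{(1)}\|)-\Phi_R(\|A\ddot v^{(2)}\|)|\le C_R\|A\hat{\ddot v}\|$ and the remaining factor $B(\ddot v^{(1)},\ddot v^{(1)})$ is bounded by $CR^2$ on the support. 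The second summand is linear in $\hat{\ddot v}$ and is handled exactly as in Lemma \ref{lem4.1}, using the mollifier bound $\|\nabla \mathcal J_{\varepsilon(n)}\cdot\|_{H^1}\le Cn\|\cdot\|_{H^1}$. For the stochastic contribution one applies Burkholder--Davis--Gundy together with assumption \eqref{as4} to produce
$$\mathbb E\Bigl(\sup_{t\in[0,T]}\Bigl\|\int_0^t \nabla P\mathcal J_{\varepsilon(n)}\bigl(f(\ddot u_n^{(1)},\nabla\ddot\Psi_n^{(1)})-f(\ddot u_n^{(2)},\nabla\ddot\Psi_n^{(2)})\bigr)d\mathcal W\Bigr\|_{\mathbb L^2}^{2p}\Bigr)\le \tfrac12\mathbb E\sup_{t\in[0,T]}\|\nabla\hat u\|_{\mathbb L^2}^{2p}+CT^p\,\|(\hat{\ddot u},\hat{\ddot{\mathbf c}}^i)\|_{X}^{2p}.$$

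Putting everything together, taking the supremum in $t\in[0,T]$, raising to the $p$-th power, and taking expectation, I obtain an inequality of the form
$$\|\mathcal{M}(\ddot u^{(1)},\ddot{\mathbf c}^{i,(1)},\ddot\Psi^{(1)}) - \mathcal{M}(\ddot u^{(2)},\ddot{\mathbf c}^{i,(2)},\ddot\Psi^{(2)})\|_{X} \le C(n,R,\mu,a_i)\,T^{\gamma}\,\|(\hat{\ddot u},\hat{\ddot{\mathbf c}}^i,\hat{\ddot\Psi})\|_{X}$$
with some $\gamma>0$, independent of the two inputs. Choosing $T$ sufficiently small (depending on $n,R$ but not on the inputs) makes $C(n,R,\mu,a_i)T^\gamma<1$, yielding the contraction. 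Iterating on $[T,2T],[2T,3T],\ldots$ then extends the local fixed point to any finite horizon. The principal obstacle I anticipate is the careful accounting for the differences of $\Phi_R$, in particular producing the $R$-dependent constant cleanly so that the smallness of $T$ alone secures the contraction, together with the boundary integral arising from the blocking condition, which must be absorbed into the $H^1$-norm of $\hat{\mathbf c}^i$ rather than into the dissipation.
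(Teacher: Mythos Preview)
Your decomposition of the $\Phi_R$-truncated nonlinearities and the BDG/Lipschitz treatment of the noise are exactly right, but there are two genuine gaps that prevent the argument from closing.

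\textbf{The main gap: pathwise control of the input norms.} When you pair, say, $\Phi_R(\|\nabla\ddot u_n^{(1)}\|_{\mathbb L^2})\,(\ddot u_n^{(2)}\cdot\nabla)\mathcal J_{\varepsilon(n)}\hat{\ddot{\mathbf c}}^{i}$ with $-\Delta\hat{\mathbf c}^i$, the resulting factor $\|\ddot u_n^{(2)}\|_{\mathbb H^1}^2\|\hat{\ddot{\mathbf c}}^i\|_{H^1}^2$ is \emph{not} bounded by $CR^2\|\hat{\ddot{\mathbf c}}^i\|_{H^1}^2$ on the support of $\Phi_R$: the cut-off only sees $\|\nabla\ddot u_n^{(1)}\|_{\mathbb L^2}$, not $\|\ddot u_n^{(2)}\|_{\mathbb H^1}$ (wrong input) nor $\|\ddot{\mathbf c}_n^{i,(j)}\|_{H^1}$ (wrong quantity). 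Since these factors are random and only bounded in $L^p_\omega$, you cannot pull them out of $\mathbb E(\,\cdot\,)^p$ without raising the moment order and losing the contraction in the fixed space $X$. The paper resolves this by introducing the stopping times
\[
\tau_R^l=\inf\bigl\{t\ge 0:\ \|(\ddot u_n^l,\ddot{\mathbf c}_n^{i,l})\|_{\widetilde X}^2\ge 2R\bigr\},\qquad l=1,2,
\]
restricting all estimates to $[0,\tau_R^1\wedge\tau_R^2]$ (where every needed input norm is $\le 2R$ pathwise), and observing that for $t\ge\tau_R^2$ all $\Phi_R$ factors vanish so the nonlinear source is identically zero. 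This stopping-time device is precisely what the paper flags as the way around ``the random element $\omega$'' in closing the mapping estimate; your proposal omits it.

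\textbf{A secondary error: the boundary term.} Your splitting is correct, $\partial_{\vec n}\hat{\ddot\Psi}=0$, but the surviving piece $\hat{\ddot{\mathbf c}}^i\,\partial_{\vec n}\ddot\Psi_n^{(1)}=\hat{\ddot{\mathbf c}}^i\,\eta$ still carries $\eta$. After integrating in time and taking $\sup_t$, this boundary integral contributes $\tfrac14\|\hat{\mathbf c}^i\|_{H^1}^2+C\|\eta\|_{L^\infty(\partial\mathcal D)}^2\|\hat{\ddot{\mathbf c}}^i\|_{H^1}^2$ with \emph{no} factor of $T$, so it cannot be absorbed by choosing $T$ small. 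This is why the paper invokes the smallness hypothesis $C\|\eta\|_{L^\infty(\partial\mathcal D)}^{2p}\le\tfrac12$ from Theorem~\ref{thm}; contrary to your parenthetical remark, that constraint \emph{is} used here.
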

\begin{proof} Assume that $(\ddot{u}^l_n, \ddot{\mathbf{c}}_n^{i, l}, \ddot{\Psi}_n^l)\in X, \mbox{ for }l=1,2, ~i=1,2,\cdots, m$, we denote the difference of $(\ddot{u}^l_n, \ddot{\mathbf{c}}_n^{i, l}, \ddot{\Psi}_n^l)\in X, l=1,2$ as $(\ddot{u}_n, \ddot{\mathbf{c}}_n^i, \ddot{\Psi}_n)$ and the difference of $(u^l_n, \mathbf{c}_n^{i,l}, \Psi_n^l)$ as $(u_n, \mathbf{c}_n^{i}, \Psi_n)$ respectively satisfying
\begin{eqnarray}\label{M1.4}
\left\{\begin{array}{ll}
\!\!\!\partial_t\mathbf{c}^i_n-a_i \Delta\mathbf{c}_n^i+\Phi_R(\|\nabla \ddot{u}^1_n\|_{\mathbb{L}^2})\big[(\ddot{u}_n\cdot \nabla )\mathcal{J}_{\varepsilon(n)}\ddot{\mathbf{c}}^{i,1}_n+(\ddot{u}^2_n\cdot \nabla )\mathcal{J}_{\varepsilon(n)}\ddot{\mathbf{c}}^{i}_n\big]\\
\quad+\big[\Phi_R(\|\nabla \ddot{u}^1_n\|_{\mathbb{L}^2})-\Phi_R(\|\nabla \ddot{u}^2_n\|_{\mathbb{L}^2})\big](\ddot{u}^2_n\cdot \nabla )\mathcal{J}_{\varepsilon(n)}\ddot{\mathbf{c}}^{i,2}_n\\=\Phi_R(\|\nabla \ddot{\Psi}^1_n\|_{W^{1,3^+}})a_i\big[{\rm div}(z_i\ddot{\mathbf{c}}_n^i\nabla\mathcal{J}_{\varepsilon(n)}\ddot{\Psi}^1_n)+{\rm div}(z_i\ddot{\mathbf{c}}_n^{i,2}\nabla\mathcal{J}_{\varepsilon(n)}\ddot{\Psi}_n)\big]\\
\quad+\big[\Phi_R(\|\nabla \ddot{\Psi}^1_n\|_{W^{1,3^+}})-\Phi_R(\|\nabla \ddot{\Psi}^2_n\|_{W^{1,3^+}})\big]a_i{\rm div}(z_i\ddot{\mathbf{c}}_n^{i,2}\nabla\mathcal{J}_{\varepsilon(n)}\ddot{\Psi}^2_n),\\
\!\!\!-\Delta \ddot{\Psi}_n=\sum_{i=1}^m z_i\ddot{\mathbf{c}}_n^i=\ddot{\rho}_n,\\
\!\!\!\partial_t u_n-\mu P\Delta u_n+\Phi_R(\|\nabla \ddot{u}^1_n\|_{\mathbb{L}^2})\big[P(\ddot{u}_n\cdot \nabla)\mathcal{J}_{\varepsilon(n)}\ddot{u}^1_n+P(\ddot{u}^2_n\cdot \nabla)\mathcal{J}_{\varepsilon(n)}\ddot{u}_n\big]\\
\quad+\big[\Phi_R(\|\nabla \ddot{u}^1_n\|_{\mathbb{L}^2})-\Phi_R(\|\nabla \ddot{u}^2_n\|_{\mathbb{L}^2})\big]P(\ddot{u}^2_n\cdot \nabla)\mathcal{J}_{\varepsilon(n)}\ddot{u}^2_n\\
\quad+P\kappa\Phi_R(\|\nabla \ddot{\Psi}^1_n\|_{W^{1,3^+}})\big[\ddot{\rho}_n\nabla \mathcal{J}_{\varepsilon(n)}\ddot{\Psi}^1_n+\ddot{\rho}^2_n\nabla \mathcal{J}_{\varepsilon(n)}\ddot{\Psi}_n\big]\\
\quad+P\kappa\big[\Phi_R(\|\nabla \ddot{\Psi}^1_n\|_{W^{1,3^+}})-\Phi_R(\|\nabla \ddot{\Psi}^2_n\|_{W^{1,3^+}})\big]\ddot{\rho}^2_n\nabla \mathcal{J}_{\varepsilon(n)}\ddot{\Psi}^2_n\\
 =P\mathcal{J}_{\varepsilon(n)}(f(\ddot{u}^1_n,\nabla\ddot{\Psi}_n^1)-f(\ddot{u}^2_n,\nabla\ddot{\Psi}_n^2))\frac{d\mathcal{W}}{dt},\\
\end{array}\right.
\end{eqnarray}
with the boundary conditions:
\begin{align}\label{e1.7**}
\left\{\begin{array}{ll}
\!\!\!(\partial_{\vec{n}}\mathbf{c}^i+z_i(\ddot{\mathbf{c}}^{i,1}\partial_{\vec{n}}\ddot{\Psi}^1-\ddot{\mathbf{c}}^{i,2}\partial_{\vec{n}}\ddot{\Psi}^2))|_{\partial \mathcal{D}}=0,\\
\!\!\!\partial_{\vec{n}}\ddot{\Psi}|_{\partial \mathcal{D}}=0,\\
\!\!\!u|_{\partial \mathcal{D}}=0,
\end{array}\right.
\end{align}
and the initial data $(u_n(0), \mathbf{c}^i_n(0))=(0, 0)$. We also have that $\int_{\mathcal{D}}\mathbf{c}^i_ndx=\int_{\mathcal{D}}\mathbf{c}^i_n(0)dx=0$ a.e., as a result, the norm $\|\mathbf{c}^i_n\|_{L^2}$ can be controlled by $\|\nabla\mathbf{c}^i_n\|_{L^2}$.

Observe that introducing cut-off functions brings additional difficulties in closing the estimates. Therefore, we further introduce a stopping time technique. Define the stopping times $\tau_R^l:=\inf\left\{t\geq 0; \|(\ddot{u}_n^l, \ddot{\mathbf{c}}_n^{i,l})\|^2_{\widetilde{X}}\geq 2R\right\}$ for $l=1,2$, where the space $$\widetilde{X}:= L^\infty(0,T;H^1\times \mathbb{H}^1)\cap L^2(0,T;(H^2)^2).$$ If the right-hand side set is empty, we take $\tau_R^l=T$. Without loss of generality, we assume $\tau_R^1\leq \tau_R^2$. Consequently, if $t\geq \tau_R^2$, we have
$$\Phi_{R}=0,~ i=1,2.$$

Using the It\^{o} formula to the function $\frac{1}{2}\|\nabla u_n\|_{\mathbb{L}^2}^2$, and taking inner product with $-\Delta \mathbf{c}^i_n$ in $\eqref{M1.4}_1$, as in Lemma \ref{lem4.1}, we shall estimate all terms.

Applying the H\"{o}lder inequality and Lemma \ref{lem4.4}, we have
\begin{align*}
&\mathbb{E}\left(\int_{0}^{T}\left|\big(\Phi_R(\|\nabla \ddot{u}^1_n\|_{\mathbb{L}^2})\big[(\ddot{u}_n\cdot \nabla )\mathcal{J}_{\varepsilon(n)}\ddot{\mathbf{c}}^{i,1}_n+(\ddot{u}^2_n\cdot \nabla )\mathcal{J}_{\varepsilon(n)}\ddot{\mathbf{c}}^{i}_n\big], -\Delta \mathbf{c}^i_n\big)\right|dt\right)^p\\
&=\mathbb{E}\left(\int_{0}^{\tau_R^1}\left|(\Phi_R(\|\nabla \ddot{u}^1_n\|_{\mathbb{L}^2})\big[(\ddot{u}_n\cdot \nabla )\mathcal{J}_{\varepsilon(n)}\ddot{\mathbf{c}}^{i,1}_n+(\ddot{u}^2_n\cdot \nabla )\mathcal{J}_{\varepsilon(n)}\ddot{\mathbf{c}}^{i}_n\big], \Delta \mathbf{c}^i_n)\right|dt\right)^p\\
&\leq \frac{a_i}{8}\mathbb{E}\left(\int_{0}^{T}\|\Delta \mathbf{c}^i_n\|_{L^2}^2dt\right)^p+Cn^{2p}\mathbb{E}\left(\int_{0}^{\tau_R^1}\|\ddot{u}_n\|_{\mathbb{H}^1}^2\|\ddot{\mathbf{c}}^{i,1}_n\|_{H^1}^2
+\|\ddot{u}^2_n\|_{\mathbb{H}^1}^2\|\ddot{\mathbf{c}}^{i}_n\|_{H^1}^2dt\right)^p\\
&\leq \frac{a_i}{8}\mathbb{E}\left(\int_{0}^{T}\|\Delta \mathbf{c}^i_n\|_{L^2}^2dt\right)^p+Cn^{2p}R^{p}\mathbb{E}\left(\int_{0}^{\tau_R^1}\|\ddot{u}_n\|_{\mathbb{H}^1}^2
+\|\ddot{\mathbf{c}}^{i}_n\|_{H^1}^2dt\right)^p\\
&\leq \frac{a_i}{8}\mathbb{E}\left(\int_{0}^{T}\|\Delta \mathbf{c}^i_n\|_{L^2}^2dt\right)^p+Cn^{2p}R^{p}T^p\mathbb{E}\bigg(\sup_{t\in [0,T]}(\|\ddot{u}_n\|_{\mathbb{H}^1}^{2p}
+\|\ddot{\mathbf{c}}^{i}_n\|_{H^1}^{2p})\bigg),
\end{align*}
and
\begin{align*}
&\mathbb{E}\left(\int_{0}^{T}\left|\big(\big[\Phi_R(\|\nabla \ddot{u}^1_n\|_{\mathbb{L}^2})-\Phi_R(\|\nabla \ddot{u}^2_n\|_{\mathbb{L}^2})\big](\ddot{u}^2_n\cdot \nabla )\mathcal{J}_{\varepsilon(n)}\ddot{\mathbf{c}}^{i,2}_n, -\Delta \mathbf{c}^i_n\big)\right|dt\right)^p\\
&=\mathbb{E}\left(\int_{0}^{\tau_R^2}\left|(\big[\Phi_R(\|\nabla \ddot{u}^1_n\|_{\mathbb{L}^2})-\Phi_R(\|\nabla \ddot{u}^2_n\|_{\mathbb{L}^2})\big](\ddot{u}^2_n\cdot \nabla )\mathcal{J}_{\varepsilon(n)}\ddot{\mathbf{c}}^{i,2}_n, \Delta \mathbf{c}^i_n)\right|dt\right)^p\\
&\leq Cn^{2p}\mathbb{E}\left(\int_{0}^{\tau_R^2}\|\nabla \ddot{u}_n\|_{\mathbb{L}^2}\|\Delta \mathbf{c}^i_n\|_{L^2}\|\ddot{u}^2_n\|_{\mathbb{H}^1}\|\ddot{\mathbf{c}}^{i,2}_n\|_{H^1}dt\right)^p\\
&\leq Cn^{2p}R^p\mathbb{E}\left(\int_{0}^{T}\|\nabla \ddot{u}_n\|^2_{\mathbb{L}^2}dt\right)^p+ \frac{a_i}{8}\mathbb{E}\left(\int_{0}^{T}\|\Delta \mathbf{c}^i_n\|_{L^2}^2dt\right)^p\\
&\leq Cn^{2p}R^{p}T^p\mathbb{E}\bigg(\sup_{t\in [0,T]}\|\ddot{u}_n\|_{\mathbb{H}^1}^{2p}\bigg)
+ \frac{a_i}{8}\mathbb{E}\left(\int_{0}^{T}\|\Delta \mathbf{c}^i_n\|_{L^2}^2dt\right)^p.
\end{align*}
Also,
\begin{align*}
&\mathbb{E}\left(\int_{0}^{T}\left|\Phi_R(\|\nabla \ddot{\Psi}^1_n\|_{W^{1,3^+}})a_i({\rm div}(z_i\ddot{\mathbf{c}}_n^i\nabla\mathcal{J}_{\varepsilon(n)}\ddot{\Psi}^1_n)+{\rm div}(z_i\ddot{\mathbf{c}}_n^{i,2}\nabla\mathcal{J}_{\varepsilon(n)}\ddot{\Psi}_n), -\Delta \mathbf{c}^i_n)\right|dt\right)^p\\
&\leq C\mathbb{E}\bigg(\int_{0}^{\tau_R^1}\Phi_R(\|\nabla \ddot{\Psi}^1_n\|_{W^{1,3^+}})\|\Delta \mathbf{c}^i_n\|_{L^2}(\|\nabla\ddot{\mathbf{c}}_n^i\|_{L^2}\|\nabla\ddot{\Psi}^1_n\|_{L^6}+\|\ddot{\mathbf{c}}_n^i\|_{L^6}\|\nabla \ddot{\Psi}^1_n\|_{W^{1,3^+}}\\
&\quad+\|\nabla\ddot{\mathbf{c}}_n^{i,2}\|_{L^2}\|\nabla\ddot{\Psi}_n\|_{L^6}+\|\ddot{\mathbf{c}}_n^{i,2}\|_{L^6}\|\nabla  \mathcal{J}_{\varepsilon(n)}\ddot{\Psi}_n\|_{H^{2}})
dt\bigg)^p\\
&\leq \frac{a_i}{8}\mathbb{E}\left(\int_{0}^{T}\|\Delta\mathbf{c}^i_n\|_{L^2}^2dt\right)^p+C(1+n^{2p})R^pT^p\mathbb{E}\bigg(\sup_{t\in [0,T]}\|\ddot{\mathbf{c}}^i_n\|_{H^1}^{2p}\bigg),
\end{align*}
and
\begin{align*}
&\mathbb{E}\left(\int_{0}^{T}\left|\big(\big[\Phi_R(\|\nabla \ddot{\Psi}^1_n\|_{W^{1,3^+}})-\Phi_R(\|\nabla \ddot{\Psi}^2_n\|_{W^{1,3^+}})\big]a_i{\rm div}(z_i\ddot{\mathbf{c}}_n^{i,2}\nabla\mathcal{J}_{\varepsilon(n)}\ddot{\Psi}^2_n), -\Delta \mathbf{c}^i_n\big)\right|dt\right)^p\\
&\leq C\mathbb{E}\bigg(\int_{0}^{\tau_R^2}\|\nabla \ddot{\Psi}_n\|_{W^{1,3^+}}\|\Delta \mathbf{c}^i_n\|_{L^2}\|\nabla\ddot{\mathbf{c}}^{i,2}_n\|_{L^2}\|\nabla  \mathcal{J}_{\varepsilon(n)}\ddot{\Psi}^2_n\|_{H^{2}}dt\bigg)^p\\
&\leq Cn^{2p}R^p\mathbb{E}\bigg(\int_{0}^{\tau_R^2}\|\nabla \ddot{\Psi}_n\|_{W^{1,3^+}}\|\Delta \mathbf{c}^i_n\|_{L^2}dt\bigg)^p\\
&\leq  \frac{a_i}{8}\mathbb{E}\left(\int_{0}^{T}\|\Delta\mathbf{c}^i_n\|_{L^2}^2dt\right)^p+Cn^{4p}R^{2p}T^p\mathbb{E}\bigg(\sup_{t\in [0,T]}\|\ddot{\mathbf{c}}^i_n\|_{H^1}^{2p}\bigg).
\end{align*}
Using the H\"{o}lder inequality and the trace inequality, we have
\begin{align*}
\left|\int_{\partial\mathcal{D}}z_i(\ddot{\mathbf{c}}_n^{i,1}-\ddot{\mathbf{c}}_n^{i,2})\mathbf{c}_n^i\eta d\mathcal{S}\right|&\leq C\|\eta\|_{L^\infty(\partial\mathcal{D})}\|\ddot{\mathbf{c}}_n^i\|_{L^2(\partial\mathcal{D})}\|\mathbf{c}_n^i\|_{L^2(\partial\mathcal{D})}\\
&\leq \frac{1}{4}\|\mathbf{c}_n^i\|_{H^1}^2+C\|\eta\|_{L^\infty(\partial\mathcal{D})}^2\|\ddot{\mathbf{c}}_n^i\|_{H^1}^2.
\end{align*}

Similarly, we could estimate the rest of terms in velocity equation by the same way due to the same constitution, we do not give the details. Hence, choosing the time $T$ small enough and assuming that $\eta$ satisfies $C\|\eta\|_{L^\infty(\partial\mathcal{D})}^{2p}\leq \frac{1}{2}$, we infer that the mapping is a contraction for any fixed $n, R$. This completes the proof.
\end{proof}

We verified both conditions of contraction mapping theorem, the local existence and uniqueness of approximate solutions $(u_n, \mathbf{c}^i_n)\in X$ to system \eqref{M1.2} follows. In order to extend the local solution to global, we proceed to show that the bounds are independent of $n$.
\begin{lemma}\label{lem2.1} Assume that assumption \eqref{as3} holds. Then, the sequence of solutions $(u_n, \mathbf{c}_n^i)$ has the following a priori estimates for all $p, j\in [1,\infty)$
\begin{align}
&\mathbb{E}\left(\sup_{t\in [0,T]}\|\mathbf{c}_n^i\|_{L^j}^p\right)\leq C,\label{e4.1}\\
&\mathbb{E}\left(\sup_{t\in [0,T]}\|u_n\|^{2p}_{\mathbb{H}^1}\right)+\mathbb{E}\left(\int_{0}^{T}\|\Delta u_n\|_{\mathbb{L}^2}^2dt\right)^p\leq C,\label{e4.2}
\end{align}
where constant $C$ is independent of $n$ but depends on $R$.
\end{lemma}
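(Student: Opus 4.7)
The plan is to close the estimates by a bootstrap that first controls $\mathbf{c}_n^i$ in every $L^j$ uniformly in $n$ and then promotes this into an $\mathbb{H}^1$-energy inequality for $u_n$. The crucial observation is that, whenever the truncator $\Phi_R$ does not vanish, the \emph{un-mollified} quantities $\nabla u_n$ and $\nabla\Psi_n$ are bounded by $2R$ in $\mathbb{L}^2$ and $W^{1,3^+}$ respectively, and by Lemma \ref{lem4.4}(i) the Friedrichs mollifier $\mathcal{J}_{\varepsilon(n)}$ commutes with derivatives and is bounded on every $L^p$ independently of $n$. This is precisely what will prevent any factor $n=1/\varepsilon(n)$ from appearing in the final bound.

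For \eqref{e4.1}, the nonnegativity $\mathbf{c}_n^i\ge 0$ a.s.\ follows from a maximum principle argument as in \cite{const, schm}. Testing $\eqref{M1.2}_1$ against $(\mathbf{c}_n^i)^{j-1}$ and integrating over $\mathcal{D}$, the principal part of the diffusion produces the favourable term $-a_i(j-1)\int_{\mathcal{D}}(\mathbf{c}_n^i)^{j-2}|\nabla\mathbf{c}_n^i|^2\,dx$, while, after integration by parts using $\mbox{div}\,u_n=0$ and the no-slip condition, the convection becomes $-\Phi_R\int_{\mathcal{D}}\mathcal{J}_{\varepsilon(n)}\mathbf{c}_n^i\,u_n\cdot\nabla(\mathbf{c}_n^i)^{j-1}\,dx$, which I control by H\"{o}lder's inequality combined with $\mathbb{H}^1\hookrightarrow\mathbb{L}^6$, a Gagliardo--Nirenberg interpolation, the uniform-in-$\varepsilon$ mollifier bound and the cut-off estimate $\Phi_R\|u_n\|_{\mathbb{H}^1}\le CR$. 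The mollified drift $\Phi_R\int z_i\mathbf{c}_n^i\nabla\mathcal{J}_{\varepsilon(n)}\Psi_n\cdot\nabla(\mathbf{c}_n^i)^{j-1}\,dx$ is treated analogously, exploiting the Sobolev embedding $W^{1,3^+}\hookrightarrow L^\infty$ in $d\le 3$ so that $\Phi_R\|\nabla\mathcal{J}_{\varepsilon(n)}\Psi_n\|_{L^\infty}\le CR$. The boundary integral generated by the blocking flux (modified by $\eta$ since $\varsigma=0$) is absorbed using the trace inequality of Lemma \ref{lem2.1*} together with the smallness assumption $\|\eta\|_{L^\infty(\partial\mathcal{D})}\le 1/(2C)$ from Theorem \ref{thm}. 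After absorbing the favourable gradient term on the left, Gronwall's lemma on the $p$-th moment yields \eqref{e4.1} uniformly in $n$.

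For \eqref{e4.2}, I apply the It\^{o} formula to $\tfrac12\|\nabla u_n\|_{\mathbb{L}^2}^2$, producing $-\mu\|\Delta u_n\|_{\mathbb{L}^2}^2$ from the Stokes term. The convection is estimated by
\begin{align*}
\Phi_R\bigl|(P(u_n\cdot\nabla)\mathcal{J}_{\varepsilon(n)}u_n,\Delta u_n)\bigr|
&\le \Phi_R\|u_n\|_{\mathbb{L}^6}\|\nabla\mathcal{J}_{\varepsilon(n)}u_n\|_{\mathbb{L}^3}\|\Delta u_n\|_{\mathbb{L}^2}\\
&\le CR\,\|\nabla u_n\|_{\mathbb{L}^2}^{1/2}\|u_n\|_{\mathbb{H}^2}^{1/2}\|\Delta u_n\|_{\mathbb{L}^2}\\
&\le \epsilon\|\Delta u_n\|_{\mathbb{L}^2}^2 + C_\epsilon R^6,
\end{align*}
via Sobolev, interpolation, the uniform mollifier bound and Young's inequality. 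The Coulomb term is bounded by $\Phi_R\|\rho_n\|_{L^2}\|\nabla\mathcal{J}_{\varepsilon(n)}\Psi_n\|_{L^\infty}\|\Delta u_n\|_{\mathbb{L}^2}\le \epsilon\|\Delta u_n\|_{\mathbb{L}^2}^2+C_\epsilon R^2\|\rho_n\|_{L^2}^2$, where $\|\rho_n\|_{L^2}$ is controlled by \eqref{e4.1}. The It\^{o} correction is bounded by $\tfrac12\|Pf(u_n,\nabla\Psi_n)\|_{L_2(\mathcal{H};\mathbb{H}^1)}^2\le C(\|u_n\|_{\mathbb{H}^1}^2+\|\nabla\Psi_n\|_{H^1}^2)$ via \eqref{as3}, and the stochastic integral by the Burkholder--Davis--Gundy inequality; the quantity $\|\nabla\Psi_n\|_{H^1}$ is bounded by $C(\|\rho_n\|_{L^2}+\|\eta\|_{H^{1/2}(\partial\mathcal{D})})$ via the elliptic regularity \eqref{e1.3} (with $\varsigma=0$). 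Absorbing the dissipation on the left, taking supremum in time, the $p$-th power and expectation, Gronwall delivers \eqref{e4.2}.

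The main obstacle is keeping every constant independent of $n$: the crude bound from Lemma \ref{lem4.4}(ii) gives $\|\mathcal{J}_{\varepsilon(n)}\|_{H^s\to H^{s+1}}\le Cn$, which would be fatal here. The remedy is to arrange each integration by parts so that $\nabla\mathcal{J}_{\varepsilon(n)}$ falls on quantities lying in an $L^p$-space with $p\in[2,6]$, where $\mathcal{J}_{\varepsilon(n)}$ is uniformly bounded, and then to exploit the $\Phi_R$-induced $\mathbb{H}^1$- respectively $W^{1,3^+}$-control on the \emph{un}-mollified $\nabla u_n$ and $\nabla\Psi_n$. A secondary delicacy is the boundary flux coming from the Neumann condition $\partial_{\vec n}\Psi=\eta$ (as $\varsigma=0$) and the blocking boundary for $\mathbf{c}^i$; both are handled uniformly by Lemma \ref{lem2.1*} together with the smallness of $\|\eta\|_{L^\infty(\partial\mathcal{D})}$ assumed in Theorem \ref{thm}.
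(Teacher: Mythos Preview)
Your argument for \eqref{e4.2} is essentially the paper's: It\^o formula on $\tfrac12\|\nabla u_n\|_{\mathbb{L}^2}^2$, absorption of the convection and Coulomb terms into $\|\Delta u_n\|_{\mathbb{L}^2}^2$ via H\"older--Gagliardo--Nirenberg plus the cut-off, assumption \eqref{as3} for the It\^o correction, BDG for the martingale, and Gronwall. The only cosmetic difference is that you push the cut-off all the way to a constant $C_\epsilon R^6$, whereas the paper stops at $C\overline{\Phi}_R^2\|u\|_{\mathbb{L}^2}^2\|\nabla u\|_{\mathbb{L}^2}^2\cdot\|\nabla u\|_{\mathbb{L}^2}^2$ and feeds it to Gronwall.

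For \eqref{e4.1} the routes diverge. The paper does \emph{not} close a single $L^j$ energy inequality; it runs a Moser-type iteration. Testing against $\mathbf{c}_i^{2k-1}$ and using only the drift term with $\widetilde{\Phi}_R\|\nabla\Psi\|_{L^6}\le CR$, it derives the recursion $\sup_t\|\mathbf{c}_i\|_{L^{2k}}\le C^{1/2k}k^{m/2k}\bigl(\|\mathbf{c}_i(0)\|_{L^\infty}+\sup_t\|\mathbf{c}_i\|_{L^k}\bigr)$ and iterates along $k=2^j$, so that all $L^j$ bounds are reduced to the $L^2$ estimate. Your direct $L^j$-energy plus Gronwall is also valid---for each fixed $j$ the convection and drift can indeed be absorbed into the good term $\|(\mathbf{c}_n^i)^{(j-2)/2}\nabla\mathbf{c}_n^i\|_{L^2}^2$ exactly as you outline---and it has the merit of treating the mollified convection honestly: the paper simply writes $(u\cdot\nabla\mathbf{c}_i,\mathbf{c}_i^{2k-1})=0$ as though no mollifier were present, and likewise suppresses the boundary flux that you absorb via Lemma~\ref{lem2.1*} and the smallness of $\eta$. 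The price of your approach is a constant that depends on $j$ in an uncontrolled way, whereas the Moser iteration packages the $j$-dependence into a convergent product; for the statement as written (fixed $j$) this is immaterial.
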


\begin{proof} For simplification, we use $(u, \mathbf{c}_i, \Psi)$ instead of $(u_n, \mathbf{c}_n^i, \Psi_n)$. Following the ideas of \cite{lee}, we use the Moser-type iteration to show \eqref{e4.1}. Taking inner product with $\mathbf{c}^{2k-1}_i$ for $k=2,3,\cdots$ in $\eqref{M1.2}_1$, using the fact that $(u\cdot\nabla \mathbf{c}_i, \mathbf{c}^{2k-1}_i)=0$, we obtain
\begin{align}\label{e2.5}
\frac{1}{2k}d\|\mathbf{c}_i\|_{L^{2k}}^{2k}+\frac{2k-1}{k^2}a_i\|\nabla \mathbf{c}^k_i\|_{L^2}^2dt&\leq C\widetilde{\Phi}_R\frac{2k-1}{k}\|\nabla\Psi\|_{L^6}\|\mathbf{c}^k_i\|_{L^3}\|\nabla\mathbf{c}^k_i\|_{L^2}\nonumber\\
&\leq CR\|\mathbf{c}^k_i\|_{L^1}^\frac{2}{3}\|\nabla\mathbf{c}^k_i\|_{L^2}^\frac{4}{3}dt,
\end{align}
where $\widetilde{\Phi}_R:=\Phi_R(\|\nabla\Psi\|_{W^{1,3^+}})$.
The Young inequality gives
\begin{align}\label{e4.6}
d\|\mathbf{c}_i\|_{L^{2k}}^{2k}+a_i\|\nabla \mathbf{c}^k_i\|_{L^2}^2dt&\leq C_kR\|\mathbf{c}^k_i\|_{L^1}^2dt,
\end{align}
where $C_k$ is a constant satisfying $C_k\leq ck^m$ for $m$ large. By interpolating $\|\mathbf{c}_i^k\|_{L^2}^2\leq C(\|\nabla\mathbf{c}_i^k\|_{L^2}^2+\|\mathbf{c}_i^k\|_{L^1}^2)$, and using \eqref{e4.6}, we find
\begin{align}\label{e4.7}
\frac{d}{dt}\|\mathbf{c}_i\|_{L^{2k}}^{2k}\leq -C\|\mathbf{c}_i\|_{L^{2k}}^{2k}+C_kR\|\mathbf{c}_i\|_{L^k}^{2k}.
\end{align}
We use the Gronwall lemma and \eqref{e4.7} to obtain
\begin{align*}
\sup_{t\in [0,T]}\|\mathbf{c}_i\|_{L^{2k}}^{2k}&\leq \|\mathbf{c}_i(0)\|_{L^{2k}}^{2k}+C_kR\sup_{t\in [0,T]}\|\mathbf{c}_i\|_{L^k}^{2k}\\
&\leq C\|\mathbf{c}_i(0)\|_{L^{\infty}}^{2k}+C_kR\sup_{t\in [0,T]}\|\mathbf{c}_i\|_{L^k}^{2k}\\
&\leq C\|\mathbf{c}_i(0)\|_{L^{\infty}}^{2k}+CRk^m\left(\sup_{t\in [0,T]}\|\mathbf{c}_i\|_{L^k}^{2k}\right),
\end{align*}
consequently,
\begin{align}\label{e4.8}
\sup_{t\in [0,T]}\|\mathbf{c}_i\|_{L^{2k}}\leq C^\frac{1}{2k}\|\mathbf{c}_i(0)\|_{L^{\infty}}+(CR)^\frac{1}{2k}k^\frac{m}{2k}\left(\sup_{t\in [0,T]}\|\mathbf{c}_i\|_{L^k}\right).
\end{align}
Letting $k=2^j$ for all $j\in \mathbb{N}$ in \eqref{e4.8}, we get
\begin{align*}
\sup_{t\in [0,T]}\|\mathbf{c}_i\|_{L^{2^{j+1}}}\leq C^\frac{1}{{2^{j+1}}}\|\mathbf{c}_i(0)\|_{L^{\infty}}+(CR)^\frac{1}{2^{j+1}}2^\frac{jm}{2^{j+1}}\left(\sup_{t\in [0,T]}\|\mathbf{c}_i\|_{L^{2^j}}\right).
\end{align*}
By the iteration,
\begin{align}\label{e4.9}
\sup_{t\in [0,T]}\|\mathbf{c}_i\|_{L^{2^{j+1}}}\leq j(CR)^{\sum_{j=1}^{\infty}\frac{1}{2^{j+1}}}2^{\sum_{j=1}^{\infty}\frac{jm}{2^{j+1}}}\left(\|\mathbf{c}_i(0)\|_{L^{\infty}}+\sup_{t\in [0,T]}\|\mathbf{c}_i\|_{L^{2}}\right).
\end{align}
Taking power $p$ and then expectation in \eqref{e4.9}, since the series are convergent,  we have
\begin{align}\label{4.27}
\mathbb{E}\left(\sup_{t\in [0,T]}\|\mathbf{c}_i\|^p_{L^{2^{j+1}}}\right)\leq C(j,m, p)\left(\mathbb{E}\|\mathbf{c}_i(0)\|^p_{L^{\infty}}+\mathbb{E}\bigg(\sup_{t\in [0,T]}\|\mathbf{c}_i\|^p_{L^{2}}\bigg)\right).
\end{align}
Bound \eqref{e4.1} follows from \eqref{4.27} and \eqref{e3.13}.

Next, we use the It\^{o} formula to the function $\frac{1}{2}\|\nabla u\|^2_{\mathbb{L}^2}$ to obtain
\begin{align}\label{e4.11}
&\frac{1}{2}d\|\nabla u\|^2_{\mathbb{L}^2}+\mu\|\Delta u\|^2_{\mathbb{L}^2}dt\nonumber\\ &=\overline{\Phi}_R(u\cdot \nabla \mathcal{J}_\varepsilon u, \Delta u)dt-\widetilde{\Phi}_R(\nabla(\kappa\rho\nabla \mathcal{J}_\varepsilon\Psi), \nabla u)dt+(\nabla P\mathcal{J}_{\varepsilon}f(u, \nabla \Psi), \nabla u)d\mathcal{W}\nonumber\\ &\quad+\frac{1}{2}\|\nabla P\mathcal{J}_{\varepsilon}f(u, \nabla \Psi)\|_{L_2(\mathcal{H};\mathbb{L}^2)}^2dt\nonumber\\
&\leq C\overline{\Phi}_R\|\Delta u\|_{\mathbb{L}^2}\|u\|_{L^4}\|\nabla u\|_{L^4}dt+C\|\Delta u\|_{\mathbb{L}^2}\|\rho\|_{L^3}\|\nabla\Psi\|_{L^6}dt\nonumber\\
&\quad+\frac{\ell_3}{2}(\|\nabla u\|_{\mathbb{L}^2}^2+\|\Delta \Psi\|^2_{L^2})dt+(\nabla \mathcal{J}_{\varepsilon}f(u, \nabla \Psi), \nabla u)d\mathcal{W}\nonumber\\
&\leq C\overline{\Phi}_R\|\Delta u\|_{\mathbb{L}^2}^\frac{3}{2}\|u\|_{\mathbb{L}^2}^\frac{1}{2}\|\nabla u\|_{\mathbb{L}^2}dt+\frac{1}{4}\|\Delta u\|_{\mathbb{L}^2}^2dt+C\|\rho\|_{L^3}^2\|\nabla\Psi\|_{L^6}^2dt\nonumber\\
&\quad+\ell_3(\|\nabla u\|_{\mathbb{L}^2}^2+\|\Delta \Psi\|^2_{L^2})dt+(\nabla \mathcal{J}_{\varepsilon}f(u, \nabla \Psi), \nabla u)d\mathcal{W}\nonumber\\
&\leq \frac{1}{2}\|\Delta u\|_{\mathbb{L}^2}^2dt+\left(\overline{\Phi}_R^2\|u\|_{\mathbb{L}^2}^2\|\nabla u\|_{\mathbb{L}^2}^2\right)\|\nabla u\|_{\mathbb{L}^2}^2dt+C\|\rho\|_{L^3}^2\|\nabla\Psi\|_{L^6}^2dt\nonumber\\
&\quad+C(\|\nabla u\|_{\mathbb{L}^2}^2+\|\Delta\Psi\|^2_{L^2})dt+(\nabla \mathcal{J}_{\varepsilon}f(u, \nabla \Psi), \nabla u)d\mathcal{W},
\end{align}
where $\overline{\Phi}_R:=\Phi_R(\|\nabla u_n\|^2_{\mathbb{L}^2})$. In the first inequality above, we used the H\"{o}lder inequality, Lemma \ref{lem4.4} and assumption \eqref{as3}. In the second inequality, we used the Gagliardo-Nirenberg inequality \eqref{e4.18}, leading to
\begin{align*}
|(u\cdot \nabla \mathcal{J}_\varepsilon u, \Delta u)|&\leq C\left\{\begin{array}{ll}
\!\!\!\|\Delta u\|_{\mathbb{L}^2}\|u\|_{\mathbb{L}^4}\|\nabla u\|_{\mathbb{L}^4},~ {\rm if~ }~d=2  ,\\
\!\!\!\|\Delta u\|_{\mathbb{L}^2}\|u\|_{\mathbb{L}^6}\| \nabla u\|_{\mathbb{L}^3},~  {\rm if~ }~d=3,
\end{array}\right.\nonumber\\
&\leq C\left\{\begin{array}{ll}
\!\!\!\|\Delta u\|_{\mathbb{L}^2}^\frac{3}{2}\|u\|^\frac{1}{2}_{\mathbb{L}^2}\|\nabla u\|_{\mathbb{L}^2},~ {\rm if~ }~d=2  ,\\
\!\!\!\|\Delta u\|_{\mathbb{L}^2}^\frac{3}{2}\|u\|_{\mathbb{L}^6}\| \nabla u\|^\frac{1}{2}_{\mathbb{L}^2},~  {\rm if~ }~d=3.
\end{array}\right.
\end{align*}
For the stochastic term, assumption \eqref{as3}, Lemma \ref{lem4.4}, and the similar argument of \eqref{e3.11} imply
\begin{align}\label{4.19}
&\mathbb{E}\left(\sup_{s\in [0,t]}\left|\int_{0}^{s}(\nabla P\mathcal{J}_{\varepsilon}f(u, \nabla \Psi), \nabla u)d\mathcal{W}\right|^p\right)\nonumber\\
&\leq \frac{1}{2}\mathbb{E}\left(\sup_{s\in [0,t]}\|\nabla u\|_{\mathbb{L}^2}^{2p}\right)+C\mathbb{E}\left(\int_{0}^{t}\|\nabla u\|_{\mathbb{L}^2}^{2}+\|\Delta \Psi\|_{L^2}^{2}ds\right)^p.
\end{align}
From \eqref{e4.11} and \eqref{4.19}, again by the Gronwall lemma, we have
\begin{align*}
\mathbb{E}\left(\sup_{t\in [0,T]}\|u\|^{2p}_{\mathbb{H}^1}\right)+\mathbb{E}\left(\int_{0}^{T}\|\Delta u\|_{\mathbb{L}^2}^2dt\right)^p\leq C,
\end{align*}
where $C$ is independent of $n$. This completes the proof.
\end{proof}
\begin{remark}
 Actually, in inequality \eqref{e2.5}, it is enough for the cut-off function $\Phi$ depending on the norm $\|\nabla\Psi\|_{L^6}$.
\end{remark}

\begin{remark}\label{rem2.2} In the last line of \eqref{e4.11}, we use the truncation operator $\overline{\Phi}_R$ to control the ``bad" term $\|u\|_{\mathbb{L}^2}^2\|\nabla u\|_{\mathbb{L}^2}^2$. Therefore, if we could show
\begin{align*}
\mathbb{E}\int_{0}^{t}\|u\|_{\mathbb{L}^2}^2\|\nabla u\|_{\mathbb{L}^2}^2ds\leq C,
\end{align*}
for a constant $C$ which is independent of $R,n$, then the global solution follows.
\end{remark}

Next, we improve the space regularity of $\mathbf{c}^i_n$.
\begin{lemma}\label{lem2.2} The approximate solutions $\mathbf{c}^i_n$ satisfy
\begin{align}\label{e2.16}
\mathbb{E}\left(\sup_{t\in [0,T]}\|\mathbf{c}_n^i\|^{2p}_{H^1}\right)+\mathbb{E}\left(\int_{0}^{T}a_i\|\mathbf{c}^i_n\|_{H^2}^2dt\right)^p\leq C,
\end{align}
for a constant $C$ depending on $R$ but independent of $n$ and $p\in[1,\infty)$.
\end{lemma}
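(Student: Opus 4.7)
The plan is to derive a pathwise differential inequality for $\|\nabla \mathbf{c}^i_n\|_{L^2}^2$ by testing equation $\eqref{M1.2}_1$ against $-\Delta \mathbf{c}^i_n$, in essentially the same spirit as the contraction estimate of Lemma \ref{lem4.1}, and then close with Gronwall's inequality using the $L^2_tH^2_x$-bound on $u_n$ provided by \eqref{e4.2}. A pleasant simplification is that $\eqref{M1.2}_1$ contains no stochastic integral, so the entire computation can be carried out $\omega$-wise and expectation is taken only at the very end, avoiding any appeal to the Burkholder-Davis-Gundy inequality.

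After an integration by parts, the blocking boundary condition together with $\partial_{\vec{n}}\Psi_n|_{\partial\mathcal{D}}=\eta$ (the regime $\varsigma=0$) reduces $\partial_{\vec{n}}\mathbf{c}^i_n$ to $-z_i\mathbf{c}^i_n\eta$ on $\partial\mathcal{D}$, contributing a boundary flux that can be written as the time derivative of $z_i\int_{\partial\mathcal{D}}(\mathbf{c}^i_n)^2\eta\,d\mathcal{S}$. This yields an identity of the form
\begin{align*}
&\tfrac{1}{2}\tfrac{d}{dt}\Bigl(\|\nabla \mathbf{c}^i_n\|_{L^2}^2+z_i\!\!\int_{\partial\mathcal{D}}(\mathbf{c}^i_n)^2\eta\,d\mathcal{S}\Bigr)+a_i\|\Delta\mathbf{c}^i_n\|_{L^2}^2\\
&\qquad=\Phi_R\bigl(u_n\!\cdot\!\nabla\mathcal{J}_{\varepsilon(n)}\mathbf{c}^i_n,\,\Delta\mathbf{c}^i_n\bigr)-a_iz_i\Phi_R\bigl(\mathrm{div}(\mathbf{c}^i_n\nabla\mathcal{J}_{\varepsilon(n)}\Psi_n),\,\Delta\mathbf{c}^i_n\bigr).
\end{align*}
For the convection term I would use $H^2\hookrightarrow L^\infty$ together with the uniform mollifier bound of Lemma \ref{lem4.4} to estimate it by $\tfrac{a_i}{4}\|\Delta\mathbf{c}^i_n\|_{L^2}^2+C\|u_n\|_{H^2}^2\|\nabla\mathbf{c}^i_n\|_{L^2}^2$, where $\|u_n\|_{H^2}^2$ is integrable in time by \eqref{e4.2}. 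Expanding the drift term as $\nabla\mathbf{c}^i_n\!\cdot\!\nabla\mathcal{J}_{\varepsilon(n)}\Psi_n+\mathbf{c}^i_n\Delta\mathcal{J}_{\varepsilon(n)}\Psi_n$, using $W^{1,3^+}\hookrightarrow L^\infty$, $H^1\hookrightarrow L^6$, and the cut-off $\|\nabla\mathcal{J}_{\varepsilon(n)}\Psi_n\|_{W^{1,3^+}}\leq 2R$, I obtain $\tfrac{a_i}{4}\|\Delta\mathbf{c}^i_n\|_{L^2}^2+C(R)\|\mathbf{c}^i_n\|_{H^1}^2$; mass conservation \eqref{e1.8} combined with the generalized Poincar\'e inequality then replaces $\|\mathbf{c}^i_n\|_{H^1}^2$ by $C(\|\nabla\mathbf{c}^i_n\|_{L^2}^2+\|\mathbf{c}^i_0\|_{L^1}^2)$.

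The main technical point is the sign-indefinite boundary term sitting on the left, which has to be absorbed before Gronwall can be applied. Lemma \ref{lem2.1*} at $p=2$ gives $|\int_{\partial\mathcal{D}}(\mathbf{c}^i_n)^2\eta\,d\mathcal{S}|\leq C\|\eta\|_{L^\infty(\partial\mathcal{D})}\bigl(\delta\|\nabla\mathbf{c}^i_n\|_{L^2}^2+C_\delta\|\mathbf{c}^i_0\|_{L^1}^2\bigr)$, and the smallness hypothesis $\|\eta\|_{L^\infty(\partial\mathcal{D})}\leq\tfrac{1}{2C}$ from Theorem \ref{thm} is exactly what lets me absorb the gradient piece into $\tfrac{1}{2}\|\nabla\mathbf{c}^i_n\|_{L^2}^2$ at both the running time and the initial time. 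Integrating in $t$ and applying the pathwise Gronwall inequality with integrable driver $C(R)+C\|u_n\|_{H^2}^2$ produces $\sup_{[0,T]}\|\nabla\mathbf{c}^i_n\|_{L^2}^2+a_i\int_0^T\|\Delta\mathbf{c}^i_n\|_{L^2}^2\,dt\leq C(R,T)\exp\bigl(C\int_0^T\|u_n\|_{H^2}^2\,ds\bigr)$. Raising to the $p$-th power, taking expectation, and invoking the $R$-dependent but $n$-independent bound \eqref{e4.2} on $\int_0^T\|u_n\|_{H^2}^2\,dt$ together with elliptic regularity for the Neumann Laplacian (using the blocking boundary condition to upgrade $\|\Delta\mathbf{c}^i_n\|_{L^2}$ to $\|\mathbf{c}^i_n\|_{H^2}$ modulo lower order terms already under control by \eqref{e4.1}) yields \eqref{e2.16}, with a constant that depends on $R$ through the cut-off but is uniform in the mollification parameter $n$.
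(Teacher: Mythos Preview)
Your overall plan---test $\eqref{M1.2}_1$ against $-\Delta\mathbf{c}^i_n$, identify the boundary contribution via $\partial_{\vec n}\mathbf{c}^i_n=-z_i\mathbf{c}^i_n\eta$, absorb that boundary term with Lemma~\ref{lem2.1*} and the smallness of $\eta$, then Gronwall---is exactly the paper's route. The drift term and the boundary term are handled correctly and in the same spirit as the paper.

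The gap is in your treatment of the convection term. By discarding the cut-off $\Phi_R(\|\nabla u_n\|_{\mathbb L^2})$ and estimating $\|u_n\|_{L^\infty}\le C\|u_n\|_{H^2}$, you end up with a Gronwall driver $C\|u_n\|_{H^2}^2$, and after integrating in time your pathwise bound reads
\[
\sup_{[0,T]}\|\nabla\mathbf{c}^i_n\|_{L^2}^2+a_i\!\int_0^T\!\|\Delta\mathbf{c}^i_n\|_{L^2}^2\,dt
\le C(R,T)\exp\Bigl(C\!\int_0^T\!\|u_n\|_{H^2}^2\,ds\Bigr).
\]
Raising to the $p$-th power and taking expectation then requires an \emph{exponential} moment of $\int_0^T\|u_n\|_{H^2}^2\,ds$. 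Estimate \eqref{e4.2} only supplies \emph{polynomial} moments $\mathbb E\bigl(\int_0^T\|\Delta u_n\|_{\mathbb L^2}^2\,dt\bigr)^p\le C$; it does not give $\mathbb E\exp(\,\cdot\,)<\infty$, so this step fails.

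The fix is to actually use the cut-off in front of the convection term rather than throw it away. Interpolating as in the paper (e.g.\ $\|u_n\|_{L^6}\|\nabla\mathcal J_{\varepsilon(n)}\mathbf{c}^i_n\|_{L^3}$ in $d=3$, or $\|u_n\|_{L^4}\|\nabla\mathcal J_{\varepsilon(n)}\mathbf{c}^i_n\|_{L^4}$ in $d=2$) and then exploiting $\Phi_R(\|\nabla u_n\|_{\mathbb L^2})\|\nabla u_n\|_{\mathbb L^2}\le 2R$ yields
\[
\Phi_R\bigl|(u_n\!\cdot\!\nabla\mathcal J_{\varepsilon(n)}\mathbf{c}^i_n,\Delta\mathbf{c}^i_n)\bigr|
\le \tfrac{a_i}{6}\|\Delta\mathbf{c}^i_n\|_{L^2}^2+C R^4\|\nabla\mathbf{c}^i_n\|_{L^2}^2,
\]
so that the Gronwall driver becomes the \emph{deterministic} constant $C(1+R^4)$. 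With this change the exponential factor is deterministic, taking $p$-th powers and expectation is immediate, and \eqref{e2.16} follows exactly as you described, now relying only on the $L^2$-level bound \eqref{e3.13} (or \eqref{e4.1}) for $\mathbf{c}^i_n$ and never on \eqref{e4.2}.
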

\begin{proof} Taking inner product with $-\Delta \mathbf{c}^i$ in equation $\eqref{M1.2}_1$, yields that
\begin{align}\label{e4.16}
&\frac{1}{2}d\left(\|\nabla\mathbf{c}^i\|_{L^2}^2+\int_{\partial\mathcal{D}}\eta z_i(\mathbf{c}^i)^2d\mathcal{S}\right)+a_i\|\Delta \mathbf{c}^i\|_{L^2}^2dt\nonumber\\
&\qquad\qquad\quad=\overline{\Phi}_R(u\cdot \nabla \mathcal{J}_\varepsilon\mathbf{c}^i, \Delta \mathbf{c}^i)dt+\widetilde{\Phi}_R(a_i{\rm div}(z_i\mathbf{c}^i\nabla \mathcal{J}_\varepsilon\Psi), -\Delta \mathbf{c}^i)dt.
\end{align}
Again using the H\"{o}lder inequality and the Gagliardo-Nirenberg inequality \eqref{e4.18}, we get
\begin{align}
\overline{\Phi}_R|(u\cdot \nabla \mathcal{J}_\varepsilon\mathbf{c}^i, \Delta \mathbf{c}^i)| &\leq \left\{\begin{array}{ll}
\!\!\!\frac{a_i}{6}\|\Delta \mathbf{c}^i\|_{L^2}^2+C\overline{\Phi}_R^2\|u\|_{\mathbb{L}^2}^2\|\nabla u\|_{\mathbb{L}^2}^2\|\nabla \mathbf{c}^i\|^2_{L^2},~ &{\rm if~ }~d=2  ,\\
\!\!\!\frac{a_i}{6}\|\Delta \mathbf{c}^i\|_{L^2}^2+C\overline{\Phi}_R^2\|\nabla u\|_{\mathbb{L}^2}^4\|\nabla \mathbf{c}^i\|^2_{L^2},~ & {\rm if~ }~d=3.
\end{array}\right. \label{e2.22}
\end{align}
Similarly,
\begin{align}
\widetilde{\Phi}_R|(a_iz_i\nabla \mathbf{c}^i\nabla\mathcal{J}_\varepsilon\Psi, -\Delta \mathbf{c}^i)|&\leq C\widetilde{\Phi}_R\|\nabla \Psi\|_{L^\infty}\|\nabla \mathbf{c}^i\|_{L^2}\|\Delta \mathbf{c}^i\|_{L^2}\nonumber\\
&\leq  \frac{a_i}{6}\|\Delta \mathbf{c}^i\|_{L^2}^2+C\widetilde{\Phi}_R^2\|\nabla \Psi\|^2_{L^\infty}\|\nabla \mathbf{c}^i\|_{L^2}^2,
\end{align}
and
\begin{align}\label{e4.19}
\widetilde{\Phi}_R|(a_iz_i\mathbf{c}^i\Delta\mathcal{J}_\varepsilon\Psi,-\Delta \mathbf{c}^i )|&\leq C\widetilde{\Phi}_R\|\Delta \mathbf{c}^i\|_{L^2}\|\mathbf{c}^i\|_{L^6}\|\Delta \Psi\|_{L^3}\nonumber\\
&\leq \frac{a_i}{6}\|\Delta \mathbf{c}^i\|_{L^2}^2+C\widetilde{\Phi}_R^2\|\Delta \Psi\|^2_{L^3}(\|\nabla \mathbf{c}^i\|_{L^2}^2+\|\mathbf{c}^i\|_{L^2}^2).
\end{align}
Collecting \eqref{e4.16}-\eqref{e4.19}, we arrive at
\begin{align*}
&\frac{1}{2}d\left(\|\nabla\mathbf{c}^i\|_{L^2}^2+\int_{\partial\mathcal{D}}\eta z_i(\mathbf{c}^i)^2d\mathcal{S}\right)+a_i\|\Delta \mathbf{c}^i\|_{L^2}^2dt\\
&\leq \frac{a_i}{2}\|\Delta \mathbf{c}^i\|_{L^2}^2dt+C\widetilde{\Phi}_R^2(R^4+\|\nabla \Psi\|^2_{L^\infty}+\|\Delta \Psi\|^2_{L^3})\|\nabla \mathbf{c}^i\|_{L^2}^2dt\\
&\quad+C\widetilde{\Phi}_R^2(R^4+\|\nabla \Psi\|^2_{L^\infty}+\|\Delta \Psi\|^2_{L^3})\|\mathbf{c}^i\|_{L^2}^2dt\nonumber\\
&\leq \frac{a_i}{2}\|\Delta \mathbf{c}^i\|_{L^2}^2dt+C(1+R^4)\|\nabla \mathbf{c}^i\|_{L^2}^2dt+C(1+R^4)\|\mathbf{c}^i\|_{L^2}^2dt.
\end{align*}
Taking integral of time $t$, and then supremum over $t\in[0,T]$,  by Lemma \ref{lem2.1*}, we have
\begin{align}\label{e4.20}
&\frac{1}{2}\sup_{t\in [0, T]}\|\nabla\mathbf{c}^i\|_{L^2}^2+\int_0^T \frac{a_i}{2}\|\Delta \mathbf{c}^i\|_{L^2}^2dt\nonumber\\
&\leq \|\nabla\mathbf{c}^i_0\|_{L^2}^2+\int_{\partial\mathcal{D}}\eta z_i(\mathbf{c}^i_0)^2d\mathcal{S}+C\int_0^T (1+R^4)\|\nabla \mathbf{c}^i\|_{L^2}^2dt\nonumber\\&\quad+C\int_0^T (1+R^4)\|\mathbf{c}^i\|_{L^2}^2dt+\frac{1}{2}\sup_{t\in [0, T]}\left|\int_{\partial\mathcal{D}}\eta z_i(\mathbf{c}^i)^2d\mathcal{S}\right|\nonumber\\
&\leq \|\nabla\mathbf{c}^i_0\|_{L^2}^2+\|\eta\|_{L^\infty}|z_i|\|\mathbf{c}^i_0\|_{L^2(\partial\mathcal{D})}^2+C\int_0^T (1+R^4)\|\nabla \mathbf{c}^i\|_{L^2}^2dt\\&\quad+C\int_0^T (1+R^4)\|\mathbf{c}^i\|_{L^2}^2dt+\frac{1}{2}\|\eta\|_{L^\infty}|z_i|\sup_{t\in [0, T]}\|\mathbf{c}^i\|^2_{L^2(\partial\mathcal{D})}\nonumber\\
&\leq \|\nabla\mathbf{c}^i_0\|_{L^2}^2+\|\eta\|_{L^\infty}|z_i|\|\mathbf{c}^i_0\|_{L^2(\partial\mathcal{D})}^2+C\int_0^T (1+R^4)\|\nabla \mathbf{c}^i\|_{L^2}^2dt\nonumber\\&\quad+C\int_0^T (1+R^4)\|\mathbf{c}^i\|_{L^2}^2dt+\frac{1}{2}\|\eta\|_{L^\infty}|z_i|\sup_{t\in [0, T]}(\delta\|\nabla\mathbf{c}^i\|^2_{L^2}+C(\delta)\|\mathbf{c}^i\|^2_{L^1}),\nonumber
\end{align}
choosing $\delta$ small enough such that
$$\delta\|\eta\|_{L^\infty}|z_i|\leq \frac{1}{2},$$
as well as \eqref{e1.8}, then we have from \eqref{e4.20}
\begin{align}\label{e4.21}
&\frac{1}{4}\sup_{t\in [0, T]}\|\nabla\mathbf{c}^i\|_{L^2}^2+\int_0^T \frac{a_i}{2}\|\Delta \mathbf{c}^i\|_{L^2}^2dt\nonumber\\
&\leq \|\nabla\mathbf{c}^i_0\|_{L^2}^2+\|\eta\|_{L^\infty}|z_i|\|\mathbf{c}^i_0\|_{L^2(\partial\mathcal{D})}^2+C\int_0^T (1+R^4)\|\nabla \mathbf{c}^i\|_{L^2}^2dt\nonumber\\&\quad+C\int_0^T (1+R^4)\|\mathbf{c}^i\|_{L^2}^2dt+\frac{1}{2}C(\delta)\|\eta\|_{L^\infty}|z_i|\sup_{t\in [0, T]}\|\mathbf{c}^i\|^2_{L^1}\nonumber\\
&\leq \|\nabla\mathbf{c}^i_0\|_{L^2}^2+\|\eta\|_{L^\infty}|z_i|\|\mathbf{c}^i_0\|_{L^2(\partial\mathcal{D})}^2+C\int_0^T (1+R^4)\|\nabla \mathbf{c}^i\|_{L^2}^2dt\nonumber\\&\quad+C\int_0^T (1+R^4)\|\mathbf{c}^i\|_{L^2}^2dt+\frac{1}{2}C(\delta)\|\eta\|_{L^\infty}|z_i|\|\mathbf{c}^i_0\|^2_{L^1}.
\end{align}
Taking power $p$ and taking expectation in \eqref{e4.21}, using the Gronwall lemma,  \eqref{e3.13}, we conclude
\begin{align*}
\mathbb{E}\left(\sup_{t\in [0,T]}\|\nabla\mathbf{c}^i\|_{L^2}^{2p}\right)+\mathbb{E}\left(\int_{0}^{T}a_i\|\Delta \mathbf{c}^i\|_{L^2}^2dt\right)^p\leq C,
\end{align*}
where $C$ depends on $R,T$ and initial data but independent of $n$.
\end{proof}

Again, using the elliptic regularity theory, from \eqref{e1.3} and \eqref{e2.16}, we can infer that
\begin{align}
\Psi\in L^p(\Omega;L^\infty(0,T;H^3)\cap L^2(0,T; H^4)).
\end{align}

\subsubsection{Stochastic compactness} Similar to the stochastic compactness argument in the previous section, we also define the measure set and the path space as follows:
$$\widetilde{\pounds}=\widetilde{\pounds}^u\ast\widetilde{\pounds}^{\mathbf{c}^i}\ast\widetilde{\pounds}^\Psi\ast \widetilde{\pounds}^\mathcal{W},$$
where $\widetilde{\pounds}^u$ is the law of $u$ on the path space
$$\widetilde{\mathbb{X}}_u:=L_{w}^{2}(0,T;H^{2}(\mathcal{D}))\cap L^2(0,T; \mathbb{H}^1(\mathcal{D})),$$
$\widetilde{\pounds}^{\mathbf{c}^i}$ is the law of $\mathbf{c}^i$ on the path space
$$\widetilde{\mathbb{X}}_{\mathbf{c}^i}:=L_{w}^{2}(0,T;H^{2}(\mathcal{D}))\cap L^2(0,T; H^1(\mathcal{D})),$$
$\widetilde{\pounds}^{\Psi}$ is the law of $\Psi$ on the path space
$$\widetilde{\mathbb{X}}_{\Psi}:=L_{w}^{2}(0,T;H^{4}(\mathcal{D}))\cap L^2(0,T; H^3(\mathcal{D}))\cap C([0,T];H^2(\mathcal{D})),$$
$\widetilde{\pounds}^{\mathcal{W}}$ is the law of $\mathcal{W}$ on the path space
$$\widetilde{\mathbb{X}}_{\mathcal{W}}:=C([0,T];\mathcal{H}_0).$$
Let
$$\widetilde{\mathbb{X}}:=\widetilde{\mathbb{X}}_u\times \widetilde{\mathbb{X}}_{\mathbf{c}^i}\times\widetilde{\mathbb{X}}_{\Psi}\times\widetilde{\mathbb{X}}_{\mathcal{W}}.$$

\begin{corollary}\label{cor1}
The measure set $\big\{\widetilde{\pounds}^n\big\}_{n\geq 1}$ is tight on path space $\widetilde{\mathbb{X}}$.
\end{corollary}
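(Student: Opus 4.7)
The plan is to adapt the tightness argument of Lemma \ref{lem3.4} to the higher-regularity path space $\widetilde{\mathbb{X}}$. I would first assemble uniform spatial bounds. Lemmas \ref{lem2.1} and \ref{lem2.2}, combined with the elliptic regularity estimate \eqref{e1.3} applied to $-\Delta \Psi_n = \rho_n$ with Neumann boundary condition ($\varsigma=0$), give uniformly in $n$ that $u_n \in L^p(\Omega; L^\infty(0,T;\mathbb{H}^1)\cap L^2(0,T;H^2))$, $\mathbf{c}_n^i \in L^p(\Omega; L^\infty(0,T;H^1)\cap L^2(0,T;H^2))$, and $\Psi_n \in L^p(\Omega; L^\infty(0,T;H^3)\cap L^2(0,T;H^4))$ for every $p\in[1,\infty)$. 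Banach-Alaoglu then immediately delivers compactness of bounded sets in each of the weak topologies $L_w^2(0,T;H^2)$ for $u_n$ and $\mathbf{c}_n^i$, and $L_w^2(0,T;H^4)$ for $\Psi_n$.

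Next, I would produce uniform time-H\"older regularity in a weaker space following the template of Lemma \ref{lem3.3}. Integrating each equation from $t'$ to $t$, estimating each deterministic contribution (including those multiplied by the cut-offs $\Phi_R$, which are bounded) by H\"older's inequality using the spatial bounds above, and applying the Burkholder-Davis-Gundy inequality together with assumption \eqref{as3} to the stochastic integral in the velocity equation, I expect to obtain, for some small $\alpha\in(0,1/2)$ and every $p\in[1,\infty)$,
\begin{align*}
\mathbb{E}\| u_n\|_{C^\alpha([0,T];\mathbb{L}^2)}^p + \mathbb{E}\|\mathbf{c}_n^i\|_{C^\alpha([0,T];L^2)}^p \leq C.
\end{align*}
For $\Psi_n$, the time regularity is inherited from that of $\rho_n$ via the elliptic estimate: since $\partial_t \rho_n$ lies in $L^2(0,T;H^{-1})$ with uniform moments (visible from $\eqref{M1.2}_1$ and the spatial bounds), inverting the Laplacian under Neumann data through \eqref{e1.3} gives $\mathbb{E}\|\Psi_n\|_{C^\alpha([0,T];H^1)}^p \leq C$.

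With these controlling norms in place, the argument of Lemma \ref{lem3.4} closes the proof. For $u_n$ and $\mathbf{c}_n^i$ the set of trajectories with controlling norm (the spatial bound plus the time-H\"older norm) bounded by $K$ is compact in the strong topology $L^2(0,T;\mathbb{H}^1)$ (resp. $L^2(0,T;H^1)$) by the Aubin-Lions Lemma \ref{lem4.5}. For $\Psi_n$, compactness in $L^2(0,T;H^3)$ follows the same way, while compactness in $C([0,T];H^2)$ follows by a Simon/Arzel\`a-Ascoli-type variant using the uniform $L^\infty_t H^3$ bound, the $C^\alpha_t H^1$ estimate, and the compact embedding $H^3\hookrightarrow H^2$. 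Chebyshev's inequality then yields that the complement of each $K$-ball has probability at most $C/K$ uniformly in $n$, so each marginal law is tight; tightness of the joint law on $\widetilde{\mathbb{X}}$ follows, the $\widetilde{\mathcal{W}}_n$ factor being automatic. The only mildly delicate point I anticipate is the $C([0,T];H^2)$ component for $\Psi_n$, which requires the continuous-in-time output rather than the $L^p_t$ output of the basic Aubin-Lions lemma, but this is standard given the two bounds just listed.
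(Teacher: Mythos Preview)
Your proposal is correct and follows precisely the route the paper intends: the paper's proof of this corollary simply reads ``The proof is totally same with Lemma \ref{lem3.4}, we omit the details,'' so the adaptation you sketch (uniform spatial bounds from Lemmas \ref{lem2.1}--\ref{lem2.2} and elliptic regularity, time-H\"older estimates analogous to Lemma \ref{lem3.3}, then Aubin--Lions/Banach--Alaoglu and Chebyshev) is exactly what is being invoked. You have in fact supplied more detail than the paper itself.
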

\begin{proof}
  The proof is totally same with Lemma \ref{lem3.4}, we omit the details.
\end{proof}

Following Corollary \ref{cor1}, applying the Skorokhod-Jakubowski theorem \ref{thm6.2}, we also have the stochastic compactness result as Proposition \ref{pro3.1}:
 There exist a subsequence $\big\{\widetilde{\pounds}^{n_{k}}\big\}_{k\geq 1}$, a new probability space $(\tilde{\Omega},\tilde{\mathcal{F}},\tilde{\mathbb{P}})$ and new $\widetilde{\mathbb{X}}$-valued measurable random variables sequence $(\tilde{u}_{n_{k}},\tilde{\mathbf{c}}^i_{n_{k}}, \widetilde{\Psi}_{n_{k}}, \widetilde{\mathcal{W}}_{n_{k}})$ and $(\tilde{u},\tilde{\mathbf{c}}^i,\widetilde{\Psi}, \widetilde{\mathcal{W}})$ such that

{\rm i}. $(\tilde{u}_{n_{k}}, \tilde{\mathbf{c}}^i_{n_{k}}, \widetilde{\Psi}_{n_{k}}, \widetilde{\mathcal{W}}_{n_{k}})\rightarrow(\tilde{u},\tilde{\mathbf{c}}^i, \widetilde{\Psi}, \widetilde{\mathcal{W}})$ $\tilde{\mathbb{P}}$-a.s. in the topology of $\widetilde{\mathbb{X}}$,

{\rm ii}. the laws of $(\tilde{u}_{n_{k}},\tilde{\mathbf{c}}^i_{n_{k}}, \widetilde{\Psi}_{n_{k}}, \widetilde{\mathcal{W}}_{n_{k}})$ and $(\tilde{u},\tilde{\mathbf{c}}^i, \widetilde{\Psi}, \widetilde{\mathcal{W}})$ are given by $\big\{\widetilde{\pounds}^{n_{k}}\big\}_{k\geq 1}$ and $\widetilde{\pounds}$, respectively,

{\rm iii}. $\widetilde{\mathcal{\mathcal{W}}}_{n_{k}}$ is a Wiener process relative to the filtration $\tilde{\mathcal{F}}_{t}^{n_{k}}=\sigma(\tilde{u}_{n_{k}},\tilde{\mathbf{c}}_{n_k}^i,\widetilde{\mathcal{W}}_{n_{k}})$.

In the following, we still use $(\tilde{u}_{n},\tilde{\mathbf{c}}^i_{n}, \widetilde{\Psi}_{n}, \widetilde{\mathcal{W}}_{n})$ standing for the new subsequence.
As a result of ii, the sequence $(\tilde{u}_n, \tilde{\mathbf{c}}^i_n, \widetilde{\Psi}_n)$ also shares the following bounds:
\begin{align}
&\widetilde{\mathbb{E}}\left(\sup_{t\in [0,T]}\|\tilde{\mathbf{c}}_n^i\|_{H^1}^{2p}\right)+\widetilde{\mathbb{E}}\left(\int_{0}^{T}\| \tilde{\mathbf{c}}^i_n\|^2_{H^2}dt\right)^p\leq C,\label{b2.25}\\
&\widetilde{\mathbb{E}}\left(\sup_{t\in [0,T]}\|\tilde{u}_n\|_{\mathbb{H}^1}^{2p}\right)+\widetilde{\mathbb{E}}\left(\int_{0}^{T}\| \tilde{u}_n\|^2_{H^2}dt\right)^p\leq C,\label{b2.26}\\
&\widetilde{\mathbb{E}}\left(\sup_{t\in [0,T]}\|\widetilde{\Psi}_n\|_{H^3}^{2p}\right)+\widetilde{\mathbb{E}}\left(\int_{0}^{T}\|\widetilde{\Psi}_n\|^2_{H^4}dt\right)^p\leq C, \label{b2.27}
\end{align}
where the constant $C$ is independent of $n$.

\subsubsection{Identify the limit} Here, we only consider the truncate nonlinear terms. Choose $\varphi\in \mathbb{L}^2$, and decompose
\begin{align}\label{4.31}
&\int_{0}^{t}(\Phi_R(\|\nabla \tilde{u}_n\|_{\mathbb{L}^2})(\tilde{u}_n\cdot \nabla)\mathcal{J}_{\varepsilon(n)}\tilde{u}_n-\Phi_R(\|\nabla \tilde{u}\|_{\mathbb{L}^2})(\tilde{u}\cdot \nabla)\tilde{u},\varphi)ds\nonumber\\
&=\int_{0}^{t}(\Phi_R(\|\nabla \tilde{u}_n\|_{\mathbb{L}^2})-\Phi_R(\|\nabla \tilde{u}\|_{\mathbb{L}^2}))(\tilde{u}_n\cdot \nabla\mathcal{J}_{\varepsilon(n)}\tilde{u}_n,\varphi)ds\nonumber\\
&\quad+\int_{0}^{t}\Phi_R(\|\nabla \tilde{u}\|_{\mathbb{L}^2})((\tilde{u}_n-\tilde{u})\cdot \nabla \mathcal{J}_{\varepsilon(n)}\tilde{u}_n,\varphi)ds\nonumber\\
&\quad+\int_{0}^{t}\Phi_R(\|\nabla \tilde{u}\|_{\mathbb{L}^2})(\tilde{u}\cdot \nabla(\mathcal{J}_{\varepsilon(n)}\tilde{u}_n-\tilde{u}),\varphi)ds\nonumber\\
&=:J_1+J_2+J_3.
\end{align}
Employing the smoothness of $\Phi_R$ and the mean-value theorem, the embedding $H^1\hookrightarrow L^6$, and the H\"{o}lder inequality, Lemma \ref{lem4.4}, we get
\begin{align*}
 |J_1|&\leq \|\varphi\|_{\mathbb{L}^2}\|\nabla \tilde{u}_n-\nabla \tilde{u}\|_{L^2(0,T;\mathbb{L}^2)}\|\tilde{u}_n\|_{L^\infty(0,T;\mathbb{L}^6)}\|\nabla \tilde{u}_n\|_{L^2(0,T;\mathbb{L}^3)}\nonumber\\
 &\leq \|\varphi\|_{\mathbb{L}^2}\|\nabla \tilde{u}_n-\nabla \tilde{u}\|_{L^2(0,T;\mathbb{L}^2)}\|\tilde{u}_n\|_{L^\infty(0,T;\mathbb{H}^1)}\|\nabla \tilde{u}_n\|_{L^2(0,T;\mathbb{H}^1)}.
\end{align*}
Applying the fact that $\tilde{u}_n\rightarrow \tilde{u}$ in $L^2(0,T;\mathbb{H}^1)$, $\tilde{\mathbb{P}}$-a.s. and \eqref{b2.26}, we get
\begin{align}\label{4.32}
 |J_1|\rightarrow 0,~ {\rm as}~ n\rightarrow \infty,~ \tilde{\mathbb{P}}\mbox{-a.s.}
\end{align}
{Also, $\tilde{u}_n\rightarrow \tilde{u}$ in $L^2(0,T;\mathbb{H}^1)$}, and weakly in $L^2(0,T; H^2)$, $\tilde{\mathbb{P}}$-a.s. and the H\"{o}lder inequality lead to $\tilde{\mathbb{P}}$-a.s.
\begin{align}\label{4.33}
\lim_{n\rightarrow \infty}|J_2+J_3|=0.
\end{align}
From \eqref{4.32}-\eqref{4.33}, we deduce that the right-hand side of \eqref{4.31} goes to zero. Similarly, we can show that for $\phi\in L^2$, it holds that $\tilde{\mathbb{P}}$-a.s.
$$\lim_{n\rightarrow \infty}\int_{0}^{t}(\Phi_R(\|\nabla \tilde{u}_n\|_{\mathbb{L}^2})(\tilde{u}_n\cdot \nabla)\mathcal{J}_{\varepsilon(n)}\tilde{\mathbf{c}}^i_n-\Phi_R(\|\nabla \tilde{u}\|_{\mathbb{L}^2})(\tilde{u}\cdot \nabla)\tilde{\mathbf{c}}^i,\phi)ds=0.$$

Decompose
\begin{align}\label{4.34}
&\int_{0}^{t}(a_i{\rm div}(\Phi_R(\|\nabla \widetilde{\Psi}_n\|_{W^{1,3^+}})z_i\tilde{\mathbf{c}}_n^i\nabla\mathcal{J}_{\varepsilon(n)}\widetilde{\Psi}_n)-a_i{\rm div}(\Phi_R(\|\nabla \widetilde{\Psi}\|_{W^{1,3^+}})z_i\tilde{\mathbf{c}}^i\nabla\widetilde{\Psi}),\phi)ds\nonumber\\
&=\int_{0}^{t}(\Phi_R(\|\nabla \widetilde{\Psi}_n\|_{W^{1,3^+}})-\Phi_R(\|\nabla \widetilde{\Psi}\|_{W^{1,3^+}}))(a_i{\rm div}(z_i\tilde{\mathbf{c}}_n^i\nabla\mathcal{J}_{\varepsilon(n)}\widetilde{\Psi}_n),\phi)ds\nonumber\\
&\quad+\int_{0}^{t}a_i\Phi_R(\|\nabla \widetilde{\Psi}\|_{W^{1,3^+}})({\rm div}(z_i(\tilde{\mathbf{c}}_n^i-\tilde{\mathbf{c}}^i)\nabla\mathcal{J}_{\varepsilon(n)}\widetilde{\Psi}_n),\phi)ds\nonumber\\
&\quad+\int_{0}^{t}a_i\Phi_R(\|\nabla \widetilde{\Psi}\|_{W^{1,3^+}})({\rm div}(z_i\tilde{\mathbf{c}}^i\nabla(\mathcal{J}_{\varepsilon(n)}\widetilde{\Psi}_n-\widetilde{\Psi})),\phi)ds\nonumber\\
&=:\widetilde{J}_1+\widetilde{J}_2+\widetilde{J}_3.
\end{align}
By using the mean-value theorem, the embedding $H^1\hookrightarrow L^6, H^2\hookrightarrow W^{1,3^+}$, the convergence that $\Psi_n\rightarrow \Psi$ in $L^2(0,T;H^3)$, $\tilde{\mathbb{P}}$-a.s. and the H\"{o}lder inequality, Lemma \ref{lem4.4}, estimates \eqref{b2.25}, \eqref{b2.27}, we obtain
\begin{align}\label{4.35}
|\widetilde{J}_1|&\leq C\|\phi\|_{L^2}\|\nabla \widetilde{\Psi}_n-\nabla \widetilde{\Psi}\|_{L^2(0,T;H^2)}\big(\|\nabla\tilde{\mathbf{c}}_n^i\|_{L^2(0,T;L^6)}\|\nabla\widetilde{\Psi}_n\|_{L^\infty(0,T;L^3)}\nonumber\\
&\quad+\|\tilde{\mathbf{c}}_n^i\|_{L^\infty(0,T;L^6)}\|\Delta\widetilde{\Psi}_n\|_{L^2(0,T;L^3)}\big)\nonumber\\
&\leq C\|\phi\|_{L^2}\|\nabla \widetilde{\Psi}_n-\nabla \widetilde{\Psi}\|_{L^2(0,T;H^2)}\big(\|\tilde{\mathbf{c}}_n^i\|_{L^2(0,T;H^2)}\|\widetilde{\Psi}_n\|_{L^\infty(0,T;H^2)}\nonumber\\
&\quad+\|\tilde{\mathbf{c}}_n^i\|_{L^\infty(0,T;H^1)}\|\Delta\widetilde{\Psi}_n\|_{L^2(0,T;L^3)}\big)
\rightarrow 0,~ {\rm as}~ n\rightarrow \infty,~ \tilde{\mathbb{P}}\mbox{-a.s.}
\end{align}
and
\begin{align}\label{4.36}
|\widetilde{J}_3|\leq C\|\phi\|_{L^2}\|\Delta (\mathcal{J}_{\varepsilon(n)}\widetilde{\Psi}_n- \widetilde{\Psi})\|_{L^2(0,T;H^1)}\|\tilde{\mathbf{c}}^i\|_{L^\infty(0,T;H^1)}
\rightarrow 0,~ {\rm as}~ n\rightarrow \infty,~ \tilde{\mathbb{P}}\mbox{-a.s.}
\end{align}
Using the embedding $ H^2\hookrightarrow L^\infty$ and the H\"{o}lder inequality, we get
\begin{align*}
|\widetilde{J}_2|&\leq C\|\phi\|_{L^2}\|\tilde{\mathbf{c}}_n^i-\tilde{\mathbf{c}}^i\|_{L^2(0,T;H^1)}(\|\nabla \widetilde{\Psi}_n\|_{L^2(0,T;L^\infty)}+\|\Delta \widetilde{\Psi}_n\|_{L^2(0,T;L^3)}),
\end{align*}
then the convergence that $\tilde{\mathbf{c}}_n^i\rightarrow\tilde{\mathbf{c}}^i$ in $L^2(0,T;H^1)$, $\tilde{\mathbb{P}}$-a.s. and bound \eqref{b2.27} imply
\begin{align}\label{4.37}
|\widetilde{J}_2|\rightarrow 0,~ {\rm as}~ n\rightarrow \infty,~ \tilde{\mathbb{P}}\mbox{-a.s.}
\end{align}
Combining \eqref{4.34}-\eqref{4.37}, we have $\tilde{\mathbb{P}}$-a.s.
\begin{align*}
&\lim_{n\rightarrow\infty}\int_{0}^{t}(a_i{\rm div}(\Phi_R(\|\nabla \widetilde{\Psi}_n\|_{W^{1,3^+}})z_i\tilde{\mathbf{c}}_n^i\nabla\mathcal{J}_{\varepsilon(n)}\widetilde{\Psi}_n)\\  &\qquad\qquad\qquad\qquad-a_i{\rm div}(\Phi_R(\|\nabla \widetilde{\Psi}\|_{W^{1,3^+}})z_i\tilde{\mathbf{c}}^i\nabla\widetilde{\Psi}),\phi)ds=0.
\end{align*}
Also, by a similar way, we have for $\varphi\in \mathbb{L}^2$, $\tilde{\mathbb{P}}$-a.s.
\begin{align*}
&\lim_{n\rightarrow\infty}\int_{0}^{t}(\kappa\Phi_R(\|\nabla \widetilde{\Psi}_n\|_{W^{1,3^+}})\tilde{\rho}_n\nabla \mathcal{J}_{\varepsilon(n)}\widetilde{\Psi}_n-\kappa\Phi_R(\|\nabla \widetilde{\Psi}\|_{W^{1,3^+}})\tilde{\rho}\nabla \widetilde{\Psi},\varphi)ds=0.
\end{align*}
Keeping the same line as Step 3.3 in section 3, we could pass the limit in stochastic integral term applying assumption \eqref{as4} and Lemma \ref{lem4.6}.

Having everything in hand, we could build the global existence of martingale solution to system \eqref{M1.1} in the sense of Definition \ref{def4.3}, which is strong in PDEs sense but weak in probability sense, for the detailed proof, see Step 3.3 in section 3. Introduce a stopping time
\begin{align*}
\tau_R:=\tau^1_R\wedge\tau_R^2,
\end{align*}
where
\begin{align*}
&\tau^1_R:=\left\{t\geq 0; \sup_{s\in [0,t]}\|\nabla u\|_{\mathbb{L}^2}\geq R\right\},\\
&\tau^2_R:=\left\{t\geq 0; \sup_{s\in [0,t]}\|\nabla \Psi\|_{W^{1,3^+}}\geq R\right\}.
\end{align*}
Then, we could show that $(u, \mathbf{c}^i, \tau_R)$ is a local strong martingale solution to system \eqref{M1.1}. From the system and the estimates itself, we could infer that, for $\tilde{\mathbb{P}}$ almost all $\tilde{w}\in \tilde{\Omega}$, the trajectory of $(u, \mathbf{c}^i)$ is almost everywhere equal to a continuous $\mathbb{H}^1\times H^1$-valued functions, we give the details of proof later.  As in section 3, we could extend the local solution to the maximal solution $(u, \mathbf{c}^i, \tau)$ where $\tau$ is an accessible stopping time,
and on the set $\{\tau<\infty\}$,
\begin{align*}
&\widetilde{\mathbb{E}}\left(\sup_{t\in [0,T\wedge\tau_R]}\|\tilde{\mathbf{c}}_n^i\|_{H^1}^{2p}\right)+\widetilde{\mathbb{E}}\left(\int_{0}^{T\wedge\tau_R}\| \tilde{\mathbf{c}}^i_n\|^2_{H^2}dt\right)^p\leq C(R),\\
&\widetilde{\mathbb{E}}\left(\sup_{t\in [0,T\wedge\tau_R]}\|\tilde{u}_n\|_{\mathbb{H}^1}^{2p}\right)+\widetilde{\mathbb{E}}\left(\int_{0}^{T\wedge\tau_R}\| \tilde{u}_n\|^2_{H^2}dt\right)^p\leq C(R),\\
&\widetilde{\mathbb{E}}\left(\sup_{t\in [0,T\wedge\tau_R]}\|\widetilde{\Psi}_n\|_{H^3}^{2p}\right)+\widetilde{\mathbb{E}}\left(\int_{0}^{T\wedge\tau_R}\|\widetilde{\Psi}_n\|^2_{H^4}dt\right)^p\leq C(R),
\end{align*}
but the above bounds will blow-up in $[0, T\wedge\tau]$.

\subsection{Uniqueness}

In this section, we formulate the following uniqueness result.
\begin{proposition}\label{lem2.3}
The uniqueness of the solution holds in the following sense:
 $(u_{1},\mathbf{c}^i_{1},\tau_{1})$ and $(u_{2},\mathbf{c}^i_{2},\tau_{2})$ are two local strong solutions of system \eqref{Equ1.1}-\eqref{e1.7}, with  $$\mathbb{P}\left\{(u_{1}(0),\mathbf{c}^i_{1}(0))=(u_{2}(0),\mathbf{c}^i_{2}(0))\right\}=1,$$ then,
\begin{eqnarray*}
\mathbb{P}\left\{(u_{1}(t,x),\mathbf{c}^i_{1}(t,x))=(u_{2}(t,x), \mathbf{c}^i_{2}(t,x));\forall t\in[0,\tau_{1}\wedge\tau_{2}]\right\}=1.
\end{eqnarray*}
\end{proposition}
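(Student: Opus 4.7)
\medskip
\noindent\textbf{Proof proposal.} Set $\bar{u}=u_1-u_2$, $\bar{\mathbf{c}}^i=\mathbf{c}^i_1-\mathbf{c}^i_2$, $\bar{\Psi}=\Psi_1-\Psi_2$ and $\bar\rho=\sum_i z_i\bar{\mathbf{c}}^i$. The differences satisfy
\begin{align*}
\partial_t\bar{u}-\mu\Delta\bar{u}+(u_1\cdot\nabla)\bar{u}+(\bar{u}\cdot\nabla)u_2+\nabla\bar p+\kappa\rho_1\nabla\bar\Psi+\kappa\bar\rho\nabla\Psi_2
&=(f(u_1,\nabla\Psi_1)-f(u_2,\nabla\Psi_2))\tfrac{d\mathcal{W}}{dt},\\
\partial_t\bar{\mathbf{c}}^i+u_1\!\cdot\!\nabla\bar{\mathbf{c}}^i+\bar{u}\!\cdot\!\nabla\mathbf{c}^i_2
&=a_i\mathrm{div}\bigl(\nabla\bar{\mathbf{c}}^i+z_i\mathbf{c}^i_1\nabla\bar\Psi+z_i\bar{\mathbf{c}}^i\nabla\Psi_2\bigr),
\end{align*}
together with $-\Delta\bar\Psi=\bar\rho$, $\partial_{\vec n}\bar\Psi|_{\partial\mathcal{D}}=0$ (since $\varsigma=0$ and $\eta$ is shared), $\bar{u}|_{\partial\mathcal{D}}=0$, and the induced boundary relation $\partial_{\vec n}\bar{\mathbf{c}}^i+z_i(\mathbf{c}^i_1\partial_{\vec n}\bar\Psi+\bar{\mathbf{c}}^i\partial_{\vec n}\Psi_2)=0$ on $\partial\mathcal{D}$. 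Because both $\mathbf{c}^i_j$ obey the conservation law \eqref{e1.8} with the same data, $\int_{\mathcal{D}}\bar\rho\,dx=0$, so the elliptic estimate \eqref{e1.3} yields $\|\bar\Psi\|_{H^2}\leq C\sum_i\|\bar{\mathbf{c}}^i\|_{L^2}$. For fixed $R>0$, introduce the stopping time
\[
\tau_R:=\tau_1\wedge\tau_2\wedge\inf\Bigl\{t\geq 0:\ \|u_j(t)\|_{\mathbb{H}^1}+\sum_i\|\mathbf{c}^i_j(t)\|_{H^1}\geq R,\ j=1,2\Bigr\},
\]
which is $\mathbb{P}$-a.s.\ strictly positive thanks to the $C([0,\cdot];\mathbb{H}^1\times H^1)$ regularity of the two solutions.

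\medskip
Applying the It\^o formula to $\tfrac12\|\bar{u}\|_{\mathbb{L}^2}^2$ and testing the $\bar{\mathbf{c}}^i$-equation with $\bar{\mathbf{c}}^i$, then integrating by parts, produces the energy identity
\[
\tfrac12 d\Bigl(\|\bar{u}\|_{\mathbb{L}^2}^2+\sum_i\|\bar{\mathbf{c}}^i\|_{L^2}^2\Bigr)+\Bigl(\mu\|\nabla\bar{u}\|_{\mathbb{L}^2}^2+\sum_i a_i\|\nabla\bar{\mathbf{c}}^i\|_{L^2}^2\Bigr)dt
=\mathcal{R}\,dt+\mathcal{M}\,d\mathcal{W}+\tfrac12\|f(u_1,\nabla\Psi_1)-f(u_2,\nabla\Psi_2)\|_{L_2(\mathcal{H};L^2)}^2dt,
\]
where, by the compatibility of the boundary conditions worked out above, the surface integral in $\bar{\mathbf{c}}^i$ drops out and the transport contributions $((u_1\cdot\nabla)\bar{u},\bar{u})=((u_1\cdot\nabla)\bar{\mathbf{c}}^i,\bar{\mathbf{c}}^i)=0$. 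The remaining residual $\mathcal{R}$ consists of $-((\bar{u}\!\cdot\!\nabla)u_2,\bar{u})$, $-\kappa(\rho_1\nabla\bar\Psi+\bar\rho\nabla\Psi_2,\bar{u})$, $-((\bar{u}\!\cdot\!\nabla)\mathbf{c}^i_2,\bar{\mathbf{c}}^i)$ and the cross diffusion terms $-a_iz_i(\mathbf{c}^i_1\nabla\bar\Psi+\bar{\mathbf{c}}^i\nabla\Psi_2,\nabla\bar{\mathbf{c}}^i)$.

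\medskip
Each summand of $\mathcal{R}$ is controlled via H\"older, the Gagliardo--Nirenberg inequality \eqref{e4.18} and the elliptic bound $\|\bar\Psi\|_{H^2}\leq C\sum_i\|\bar{\mathbf{c}}^i\|_{L^2}$. For instance,
\[
|((\bar{u}\!\cdot\!\nabla)u_2,\bar{u})|\leq C\|\bar{u}\|_{\mathbb{L}^2}^{1/2}\|\nabla\bar{u}\|_{\mathbb{L}^2}^{3/2}\|u_2\|_{\mathbb{H}^1}\leq \tfrac{\mu}{8}\|\nabla\bar{u}\|_{\mathbb{L}^2}^2+C\|u_2\|_{\mathbb{H}^1}^4\|\bar{u}\|_{\mathbb{L}^2}^2,
\]
\[
|(\mathbf{c}^i_1\nabla\bar\Psi,\nabla\bar{\mathbf{c}}^i)|\leq C\|\mathbf{c}^i_1\|_{H^1}\|\bar\Psi\|_{H^2}\|\nabla\bar{\mathbf{c}}^i\|_{L^2}\leq\tfrac{a_i}{8}\|\nabla\bar{\mathbf{c}}^i\|_{L^2}^2+C\|\mathbf{c}^i_1\|_{H^1}^2\sum_j\|\bar{\mathbf{c}}^j\|_{L^2}^2,
\]
and analogously for the remaining terms. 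On $[0,\tau_R]$, each factor $\|u_j\|_{\mathbb{H}^1},\|\mathbf{c}^i_j\|_{H^1},\|\nabla\Psi_j\|_{H^1}$ is bounded by $CR$, so every nonlinear contribution splits into a fraction of the diffusion plus a term of the form $C(R)(\|\bar{u}\|_{\mathbb{L}^2}^2+\sum_i\|\bar{\mathbf{c}}^i\|_{L^2}^2)$. The noise contribution is absorbed by assumption \eqref{as2}:
\[
\|f(u_1,\nabla\Psi_1)-f(u_2,\nabla\Psi_2)\|_{L_2(\mathcal{H};L^2)}^2\leq\ell_2(\|\bar{u}\|_{\mathbb{L}^2}^2+\|\nabla\bar\Psi\|_{L^2}^2)\leq C\bigl(\|\bar{u}\|_{\mathbb{L}^2}^2+\sum_i\|\bar{\mathbf{c}}^i\|_{L^2}^2\bigr).
\]
After absorbing the $\nabla\bar u,\nabla\bar{\mathbf{c}}^i$ pieces, taking expectation (the stochastic integral $\int_0^{\cdot\wedge\tau_R}\mathcal{M}\,d\mathcal{W}$ is a true martingale by the $\mathbb{H}^1$-bound on $[0,\tau_R]$) and invoking the Gronwall lemma gives
\[
\mathbb{E}\Bigl(\|\bar{u}(t\wedge\tau_R)\|_{\mathbb{L}^2}^2+\sum_i\|\bar{\mathbf{c}}^i(t\wedge\tau_R)\|_{L^2}^2\Bigr)=0,\qquad t\in[0,T].
\]
Continuity in time and $\tau_R\nearrow\tau_1\wedge\tau_2$ as $R\to\infty$ yield $\bar{u}\equiv 0$, $\bar{\mathbf{c}}^i\equiv 0$ on $[0,\tau_1\wedge\tau_2]$, which is the desired pathwise uniqueness.

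\medskip
\noindent\textbf{Main obstacle.} The delicate point is not the stochastic part (the Lipschitz condition \eqref{as2} handles it) but rather the coupling through $\nabla\bar\Psi$ in the Nernst--Planck diffusion: the cross terms $a_iz_i(\mathbf{c}^i_1\nabla\bar\Psi,\nabla\bar{\mathbf{c}}^i)$ and $a_iz_i(\bar{\mathbf{c}}^i\nabla\Psi_2,\nabla\bar{\mathbf{c}}^i)$, together with the Lorentz-type forcing $\kappa\rho_1\nabla\bar\Psi$ in the momentum equation, must be closed by combining the $H^2$-regularity of $\bar\Psi$ (which relies crucially on the zero-mean property of $\bar\rho$ coming from mass conservation) with the $H^1$-bound on the reference solutions available on $[0,\tau_R]$. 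Verifying that the boundary integrals produced by integration by parts cancel identically — a direct consequence of writing $\mathbf{c}^i_1\partial_{\vec n}\bar\Psi+\bar{\mathbf{c}}^i\partial_{\vec n}\Psi_2=\mathbf{c}^i_1\partial_{\vec n}\Psi_1-\mathbf{c}^i_2\partial_{\vec n}\Psi_2$ and invoking the blocking condition \eqref{e1.7} for each $\mathbf{c}^i_j$ — is the structural step that makes the whole scheme go through.
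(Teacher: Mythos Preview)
Your argument is correct; the overall $L^2$ energy strategy, the handling of the boundary integrals via the identity $\mathbf{c}^i_1\partial_{\vec n}\bar\Psi+\bar{\mathbf{c}}^i\partial_{\vec n}\Psi_2=\mathbf{c}^i_1\partial_{\vec n}\Psi_1-\mathbf{c}^i_2\partial_{\vec n}\Psi_2$, and the use of elliptic regularity for $\bar\Psi$ all match the paper's proof. The one genuine difference is how you close the Gronwall step under the stochastic integral. The paper avoids any localization: it collects the ``bad'' coefficients into a weight $\Lambda(t)=C(1+\|\nabla\Psi_1\|_{L^\infty}^2+\|\mathbf{c}^i_2\|_{L^3}^2+\|\nabla\mathbf{c}^i_1\|_{L^3}^2+\|\nabla u_1\|_{\mathbb{L}^3}^2)$, applies It\^o's product rule to $e^{-\int_0^t\Lambda}\bigl(\sum_i\|\bar{\mathbf{c}}^i\|_{L^2}^2+\|\bar u\|_{\mathbb{L}^2}^2\bigr)$, and takes expectation directly on $[0,\tau_1\wedge\tau_2]$, using $\int_0^{\tau_1\wedge\tau_2}\Lambda<\infty$ a.s.\ to conclude. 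Your route instead introduces the stopping time $\tau_R$ to freeze all coefficients at $C(R)$, takes expectation, applies the ordinary Gronwall lemma, and then sends $R\to\infty$. Both are standard stochastic-Gronwall devices; your localization is slightly more elementary and makes the martingale property of the noise term obvious, while the paper's exponential weight disposes of the limit $R\to\infty$ in one stroke and needs only the a.s.\ time-integrability of the coefficients (rather than pointwise bounds) on the full interval.
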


\begin{proof} The difference of two solutions $u=u_1-u_2, \mathbf{c}^i=\mathbf{c}^i_1-\mathbf{c}^i_2, \Psi=\Psi_1-\Psi_2$ satisfy
\begin{align}\label{e2.42}
\left\{\begin{array}{ll}
\!\!\!\partial_t \mathbf{c}^i+(u\cdot \nabla) \mathbf{c}^i_1+(u_2\cdot\nabla)\mathbf{c}^i=a_i{\rm div}(\nabla \mathbf{c}^i+z_i\mathbf{c}_i\nabla \Psi_1+\mathbf{c}_2^i\nabla\Psi),\\
\!\!\!\partial_t u+P(u\cdot \nabla)u_1+P(u_2\cdot\nabla)u=\mu P\Delta u-\kappa P \sum_{i=1}^{m}z_i\mathbf{c}^i\nabla\Psi_1-\kappa P\rho_2\nabla \Psi\\ \qquad\qquad\qquad\qquad\qquad\qquad\qquad+P(f(u_1,\nabla\Psi_1)-f(u_2,\nabla\Psi_2))\frac{d\mathcal{W}}{dt},\\
\!\!\!-\Delta \Psi=\sum_{i=1}^mz_i\mathbf{c}^i, i=1,2,\cdots, m,\\
\end{array}\right.
\end{align}
with the initial data $(u_0, \mathbf{c}_0^i)=(0,0)$ and the boundary conditions
\begin{align}\label{e4.34}
\left\{\begin{array}{ll}
\!\!\!u|_{\partial\mathcal{D}}=0,\\
\!\!\!(\partial_n\mathbf{c}^i+z_i(\mathbf{c}_1^i\partial_n\Psi_1-\mathbf{c}_2^i\partial_n\Psi_2))|_{\partial\mathcal{D}}=0,\\
\!\!\!\partial_n\Psi|_{\partial\mathcal{D}}=0.\\
\end{array}\right.
\end{align}

Taking inner product with $\mathbf{c}^i$ in equation $\eqref{e2.42}_1$, and using the fact that $(u_2\cdot\nabla \mathbf{c}^i, \mathbf{c}^i)=0$, the boundary condition $\eqref{e4.34}_2$, we have
\begin{align}\label{E2.42*}
\frac{1}{2}\partial_t \|\mathbf{c}^i\|^2_{L^2}+(u\cdot \nabla \mathbf{c}^i_1,\mathbf{c}^i)&=(a_i{\rm div}(\nabla \mathbf{c}^i+z_i\mathbf{c}^i\nabla \Psi_1+z_i\mathbf{c}_2^i\nabla\Psi), \mathbf{c}^i)\nonumber\\
&=-a_i\|\nabla\mathbf{c}^i\|_{L^2}^2-a_i(z_i\mathbf{c}^i\nabla \Psi_1+\mathbf{c}_2^i\nabla\Psi, \nabla\mathbf{c}^i)\nonumber\\
&\quad+\int_{\partial\mathcal{D}}a_i(\partial_n\mathbf{c}^i+ z_i\mathbf{c}^i\partial_n\Psi_1+z_i\mathbf{c}_2^i\partial_n\Psi)\mathbf{c}^i  d\mathcal{S}\nonumber\\
&=-a_i\|\nabla\mathbf{c}^i\|_{L^2}^2-a_i(z_i\mathbf{c}^i\nabla \Psi_1+\mathbf{c}_2^i\nabla\Psi, \nabla\mathbf{c}^i).
\end{align}
Using the It\^{o} formula to the function $\frac{1}{2}\|u\|_{\mathbb{L}^2}^2$, also by $(u_2\cdot\nabla u, u)=0$, we get
\begin{align}\label{e2.43}
&\frac{1}{2}\partial_t\|u\|_{\mathbb{L}^2}^2+\mu\|\nabla u\|_{\mathbb{L}^2}^2+(u\cdot \nabla u_1, u)=-\left(\kappa \sum_{i=1}^{m}z_i\mathbf{c}^i\nabla\Psi_1+\kappa\rho_2\nabla \Psi, u\right)\nonumber\\&+(f(u_1,\nabla\Psi_1)-f(u_2,\nabla\Psi_2), u)\frac{d\mathcal{W}}{dt}+\frac{1}{2}\|Pf(u_1,\nabla\Psi_1)-Pf(u_2,\nabla\Psi_2)\|^2_{L_2(\mathcal{H};\mathbb{L}^2)}.
\end{align}
Using the H\"{o}lder inequality and the embedding $H^1\hookrightarrow L^6$, one has
\begin{align}
&|-a_i(z_i\mathbf{c}^i\nabla \Psi_1+\mathbf{c}_2^i\nabla\Psi, \nabla\mathbf{c}^i)|\nonumber\\
&\leq C\|\nabla\mathbf{c}^i\|_{L^2}(\|\mathbf{c}^i\|_{L^2}\|\nabla \Psi_1\|_{L^\infty}+\|\mathbf{c}_2^i\|_{L^3}\|\nabla\Psi\|_{L^6})\nonumber\\
&\leq C\|\nabla\mathbf{c}^i\|_{L^2}(\|\mathbf{c}^i\|_{L^2}\|\nabla \Psi_1\|_{L^\infty}+\|\mathbf{c}_2^i\|_{L^3}\|\nabla\Psi\|_{H^1})\\
&\leq C\|\nabla\mathbf{c}^i\|_{L^2}(\|\mathbf{c}^i\|_{L^2}\|\nabla \Psi_1\|_{L^\infty}+\|\mathbf{c}_2^i\|_{L^3}\|\mathbf{c}^i\|_{L^2})\nonumber\\
&\leq \frac{a_i}{2}\|\nabla\mathbf{c}^i\|_{L^2}^2+C\|\mathbf{c}^i\|^2_{L^2}(\|\nabla \Psi_1\|^2_{L^\infty}+\|\mathbf{c}_2^i\|^2_{L^3}).\nonumber
\end{align}
Similarly, we have
\begin{align}
\begin{split}
|-(u\cdot \nabla \mathbf{c}^i_1,\mathbf{c}^i)|&\leq C\|u\|_{\mathbb{L}^6}\|\mathbf{c}^i\|_{L^2}\|\nabla \mathbf{c}^i_1\|_{L^3}\\
&\leq \frac{\mu}{4}\|\nabla u\|_{\mathbb{L}^2}^2+C\|\mathbf{c}^i\|_{L^2}^2\|\nabla \mathbf{c}^i_1\|_{L^3}^2,
\end{split}
\end{align}
and
\begin{align}
\begin{split}
|-(u\cdot \nabla u_1,u)|&\leq C\|u\|_{\mathbb{L}^6}\|u\|_{\mathbb{L}^2}\|\nabla u_1\|_{\mathbb{L}^3}\\
&\leq \frac{\mu}{4}\|\nabla u\|_{\mathbb{L}^2}^2+C\|u\|_{\mathbb{L}^2}^2\|\nabla u_1\|_{\mathbb{L}^3}^2.
\end{split}
\end{align}
Proceeding to estimate the right-hand side of \eqref{e2.43}, the H\"{o}lder inequality implies that
\begin{align}
&\left|-\left(\kappa \sum_{i=1}^{m}z_i\mathbf{c}^i\nabla\Psi_1+\kappa\rho_2\nabla \Psi, u\right)\right|\nonumber\\
&\leq C\|u\|_{\mathbb{L}^2}\left(\|\nabla\Psi_1\|_{L^\infty}\sum_{i=1}^m\|\mathbf{c}^i\|_{L^2}+\|\rho_2\|_{L^3}\|\nabla \Psi\|_{L^6}\right)\nonumber\\
&\leq C\|u\|_{\mathbb{L}^2}\left(\|\nabla\Psi_1\|_{L^\infty}\sum_{i=1}^m\|\mathbf{c}^i\|_{L^2}+\|\rho_2\|_{L^3}\|\nabla \Psi\|_{H^1}\right)\nonumber\\
&\leq  \frac{\mu}{4}\|u\|_{\mathbb{L}^2}^2+C\|\nabla\Psi_1\|^2_{L^\infty}\sum_{i=1}^m\|\mathbf{c}^i\|^2_{L^2}+C\|\rho_2\|^2_{L^3}\sum_{i=1}^m\|\mathbf{c}^i\|^2_{L^2}.
\end{align}
By assumption \eqref{as1}, we obtain
\begin{align}\label{e2.48}
\|Pf(u_1,\nabla\Psi_1)-Pf(u_2,\nabla\Psi_2)\|_{L_2(\mathcal{H};\mathbb{L}^2)}^2\leq \ell_1(\|u\|_{\mathbb{L}^2}^2+\|\mathbf{c}^i\|_{L^2}^2).
\end{align}
Combining \eqref{E2.42*}-\eqref{e2.48}, we arrive at
\begin{align*}
\frac{d}{dt}\left(\sum_{i=1}^m\|\mathbf{c}^i\|^2_{L^2}+\|u\|_{\mathbb{L}^2}^2\right)&+\mu\|\nabla u\|_{\mathbb{L}^2}^2+\sum_{i=1}^ma_i\|\nabla\mathbf{c}^i\|_{L^2}^2\leq C\sum_{i=1}^m\|\mathbf{c}^i\|^2_{L^2}(\|\nabla \Psi_1\|^2_{L^\infty}+\|\mathbf{c}_2^i\|^2_{L^3})\\
&+C\sum_{i=1}^m\|\mathbf{c}^i\|_{L^2}^2\|\nabla \mathbf{c}^i_1\|_{L^3}^2+C\|u\|_{\mathbb{L}^2}^2\|\nabla u_1\|_{L^3}^2\\
&+C\|\nabla\Psi_1\|^2_{L^\infty}\sum_{i=1}^m\|\mathbf{c}^i\|^2_{L^2}+C\|\rho_2\|^2_{L^3}\sum_{i=1}^m\|\mathbf{c}^i\|^2_{L^2}\\
&+(f(u_1,\nabla\Psi_1)-f(u_2,\nabla\Psi_2), u)\frac{d\mathcal{W}}{dt}.
\end{align*}
Let
$$\Lambda(t):=C(1+\|\nabla \Psi_1\|^2_{L^\infty}+\|\mathbf{c}_2^i\|^2_{L^3}+\|\nabla \mathbf{c}^i_1\|_{L^3}^2+\|\nabla u_1\|_{\mathbb{L}^3}^2).$$
Then, applying It\^o's product formula to the function
$${\rm exp}\left(\int_{0}^{t}-\Lambda(s)ds\right)\left(\sum_{i=1}^m\|\mathbf{c}^i\|^2_{L^2}+\|u\|_{\mathbb{L}^2}^2\right),$$
leads to
\begin{align*}
&d\left[{\rm exp}\left(\int_{0}^{t}-\Lambda(s)ds\right)\left(\sum_{i=1}^m\|\mathbf{c}^i\|^2_{L^2}+\|u\|_{\mathbb{L}^2}^2\right)\right]\\
&=-\Lambda(t){\rm exp}\left(\int_{0}^{t}-\Lambda(s)ds\right)\left(\sum_{i=1}^m\|\mathbf{c}^i\|^2_{L^2}+\|u\|_{\mathbb{L}^2}^2\right)dt\\
&\quad+{\rm exp}\left(\int_{0}^{t}-\Lambda(s)ds\right) d\left(\sum_{i=1}^m\|\mathbf{c}^i\|^2_{L^2}+\|u\|_{\mathbb{L}^2}^2\right)\\
&\leq C(f(u_1,\nabla\Psi_1)-f(u_2,\nabla\Psi_2), u)d\mathcal{W}\times{\rm exp}\left(\int_{0}^{t}-\Lambda(s)ds\right).
\end{align*}
Integrating of time and taking expectation in above yield
$$\mathbb{E}\left[{\rm exp}\left(\int_{0}^{t\wedge \tau_1\wedge\tau_2}-\Lambda(s)ds\right)\left(\sum_{i=1}^m\|\mathbf{c}^i\|^2_{L^2}+\|u\|_{\mathbb{L}^2}^2\right)\right]=0.$$
Here, the stochastic term vanishes due to the facts that it is a square-integral martingale, its expectation is zero.

Bounds \eqref{b2.25}-\eqref{b2.27} give
\begin{align*}
\int_{0}^{t\wedge \tau_1\wedge\tau_2}\Lambda(s)ds=\int_{0}^{t\wedge \tau_1\wedge\tau_2}\|\nabla \Psi_1\|^2_{L^\infty}+\|\mathbf{c}_2^i\|^2_{L^3}+\|\nabla \mathbf{c}^i_1\|_{L^3}^2+\|\nabla u_1\|_{\mathbb{L}^3}^2ds<\infty,~\mathbb{P}\mbox{-a.s.},
\end{align*}
therefore,
$${\rm exp}\left(\int_{0}^{t\wedge \tau_1\wedge\tau_2}-\Lambda(s)ds\right)>0,~ \mathbb{P}\mbox{-a.s.}$$
Finally, we conclude that
$$\mathbb{E}\left(\sum_{i=1}^m\|\mathbf{c}^i\|^2_{L^2}+\|u\|_{\mathbb{L}^2}^2\right)=0.$$
This completes the proof.
\end{proof}

\noindent\underline{Proof of Theorem \ref{thm}}.  To complete the proof of Theorem \ref{thm}, the Gy\"{o}ngy-Krylov characterization tells us that the approximate solutions converge in probability on the original fixed probability space once the pathwise uniqueness holds.
\begin{proposition}[\!\!\cite{Krylov}]\label{pro5.2}
Let $X$ be a complete separable metric space and suppose that $\{Y_{n}\}_{n\geq0}$ is a sequence of $X$-valued random variables on a probability space $(\Omega,\mathcal{F},\mathbb{P})$. Let $\{\lambda_{m,n}\}_{m,n\geq1}$ be the set of joint laws of $\{Y_{n}\}_{n\geq1}$, that is,
\begin{equation*}
\lambda_{m,n}(E):=\mathbb{P}\{(Y_{n},Y_{m})\in E\},~~~E\in\mathcal{B}(X\times X).
\end{equation*}
Then $\{Y_{n}\}_{n\geq1}$ converges in probability if and only if for every subsequence of the joint probability laws $\{\mu_{m_{k},n_{k}}\}_{k\geq1}$, there exists a further subsequence that converges weakly to a probability measure $\lambda$ such that
\begin{equation*}
\lambda\{(u,v)\in X\times X: u=v\}=1.
\end{equation*}
\end{proposition}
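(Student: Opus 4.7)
The plan is to prove Proposition \ref{pro5.2} by establishing both directions of the equivalence. The forward implication reduces to a continuous mapping argument; the substantive content lies in the backward direction, where I will show that $\{Y_n\}$ is Cauchy in probability and then invoke the completeness of $X$.

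For the forward direction, if $Y_n \to Y$ in probability on $(\Omega, \mathcal{F}, \mathbb{P})$, then for any two subsequences $n_k, m_k$ the pair $(Y_{n_k}, Y_{m_k})$ converges in probability to $(Y,Y)$ in the product space $X \times X$, and hence in law. The limiting measure is the pushforward of $\mathbb{P}\circ Y^{-1}$ under the diagonal embedding $y \mapsto (y,y)$, which trivially assigns full mass to $\Delta := \{(u,v) \in X \times X : u=v\}$.

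For the backward direction I would argue by contradiction. Suppose $\{Y_n\}$ is not Cauchy in probability; then there exist $\varepsilon, \delta > 0$ together with subsequences $n_k, m_k$ such that
\begin{equation*}
\mathbb{P}\{d_X(Y_{n_k}, Y_{m_k}) \geq \varepsilon\} \geq \delta \quad \text{for all } k.
\end{equation*}
By the standing hypothesis, a further subsequence of the joint laws $\lambda_{m_k, n_k}$ converges weakly to some probability measure $\lambda$ with $\lambda(\Delta) = 1$. The set $D_\varepsilon := \{(u,v) \in X \times X : d_X(u,v) \geq \varepsilon\}$ is closed in $X \times X$ (by continuity of the metric) and disjoint from $\Delta$, so $\lambda(D_\varepsilon) = 0$. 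On the other hand, the Portmanteau theorem applied to this closed set yields
\begin{equation*}
\lambda(D_\varepsilon) \geq \limsup_{k \to \infty} \lambda_{m_k, n_k}(D_\varepsilon) \geq \delta > 0,
\end{equation*}
a contradiction. Hence $\{Y_n\}$ is Cauchy in probability, and since $X$ is a complete separable metric space, there exists an $X$-valued random variable $Y$ such that $Y_n \to Y$ in probability.

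The only step requiring genuine care is the invocation of the Portmanteau theorem on the closed set $D_\varepsilon$: I need to verify that $D_\varepsilon$ is closed and disjoint from the diagonal, both of which are immediate, and that the joint laws $\lambda_{m_k,n_k}$ along the bad subsequence still satisfy the hypothesis allowing extraction of a weakly convergent sub-subsequence whose limit concentrates on $\Delta$. I do not foresee any substantive analytic obstacle beyond this bookkeeping, since the hypothesis on subsequences of joint laws is precisely tailored to force the Cauchy-in-probability property on the original sample space, and completeness then supplies the limit.
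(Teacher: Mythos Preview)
Your proof is correct and follows the standard argument; note, however, that the paper does not supply its own proof of this proposition but simply cites it from Gy\"{o}ngy--Krylov \cite{Krylov}. What you have written is essentially the original argument from that reference, so there is nothing further to compare.
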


Define by $\pounds^{n,m}$ the joint law of $(u_n, \mathbf{c}_n^i, \Psi_n, u_m, \mathbf{c}_m^i, \Psi_m, \mathcal{W})$ on the path space $\widetilde{\mathbb{X}}$, where
$$\widetilde{\mathbb{X}}:=\widetilde{\mathbb{X}}_u\times \widetilde{\mathbb{X}}_{\mathbf{c}^i}\times\widetilde{\mathbb{X}}_{\Psi}\times\widetilde{\mathbb{X}}_u\times \widetilde{\mathbb{X}}_{\mathbf{c}^i}\times\widetilde{\mathbb{X}}_{\Psi}\times\widetilde{\mathbb{X}}_{\mathcal{W}},$$
and the two sequences $(u_n, \mathbf{c}_n^i, \Psi_n, u_m, \mathbf{c}_m^i, \Psi_m, \mathcal{W})$ are the approximate solutions of system \eqref{M1.1}.

\begin{corollary}
The measure set $\{\pounds^{n,m}\}_{n,m\geq 1}$  is tight on space $\widetilde{\mathbb{X}}$.
\end{corollary}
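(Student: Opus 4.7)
The plan is to reduce tightness of the joint laws $\pounds^{n,m}$ to tightness of their marginals, all of which have already been controlled in the paper. Indeed, $\widetilde{\mathbb{X}}$ is a Cartesian product of seven factor spaces, and the standard product-space principle applies: if $K_j \subset X_j$ is (weakly) compact and the $j$-th marginal satisfies $\pi_{j,*}\pounds^{n,m}(K_j) \geq 1 - \varepsilon/7$ uniformly in $(n,m)$, then $\prod_{j=1}^{7} K_j$ is compact in $\widetilde{\mathbb{X}}$ and
\begin{equation*}
\pounds^{n,m}\Bigl(\Bigl(\prod_j K_j\Bigr)^c\Bigr) \leq \sum_{j=1}^{7} \pi_{j,*}\pounds^{n,m}(K_j^c) \leq \varepsilon.
\end{equation*}

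With this reduction in hand, I would identify the seven marginals as follows. The projection of $\pounds^{n,m}$ onto the first factor triple $\widetilde{\mathbb{X}}_u \times \widetilde{\mathbb{X}}_{\mathbf{c}^i} \times \widetilde{\mathbb{X}}_\Psi$ is exactly the law of $(u_n, \mathbf{c}_n^i, \Psi_n)$, i.e.\ the measure $\widetilde{\pounds}^n$ whose tightness was asserted in Corollary \ref{cor1}. Projection onto the second triple gives the corresponding law indexed by $m$, which is tight by the same argument (the uniform $n$-independent bounds from Lemmas \ref{lem2.1} and \ref{lem2.2}, together with the time regularity analogues of Lemma \ref{lem3.3}, hold identically for the $m$-indexed sequence, so that the Aubin--Lions plus Banach--Alaoglu arguments from Lemma \ref{lem3.4} carry over verbatim). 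Finally, the seventh marginal on $\widetilde{\mathbb{X}}_{\mathcal{W}} = C([0,T];\mathcal{H}_0)$ is just the law of the single fixed Wiener process $\mathcal{W}$, a Radon measure on a Polish space, hence trivially tight.

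Putting these three blocks of tightness together through the product-space principle completes the argument. There is essentially no obstacle: the hard work — namely the uniform in $n$ a priori estimates and the fractional-in-time Hölder regularity that furnish the compactness in each factor space — has already been carried out earlier in the section, and the corollary is a routine consolidation of those results on the product space $\widetilde{\mathbb{X}}$.
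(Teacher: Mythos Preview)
Your proposal is correct and aligns with the paper's approach: the paper simply states that the proof is completely analogous to Corollary \ref{cor1} (which in turn defers to Lemma \ref{lem3.4}) and omits the details. Your write-up is in fact more explicit than the paper's, spelling out the product-space reduction and the identification of the marginals that the paper leaves implicit.
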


\begin{proof} The proof is completely analogous to Corollary \ref{cor1}, we omit the details.
\end{proof}

Both conditions of Proposition \ref{pro5.2} could be verified after using the Skorokhod representation theorem to measure set $\{\pounds^{n,m}\}_{n,m\geq 1}$ and Proposition \ref{lem2.3}, which implies that the sequence $(u_n, \mathbf{c}_n^i, \Psi_n)$ is convergent in probability on the original fixed probability space $\mathbf{S}$ under the topology of $\widetilde{\mathbb{X}}$, for the detailed proof, we recommend \cite[Lemma 5.1]{Debussche}.

By passing the limit, the argument is totally same with that of in the process of proving the existence of martingale solution in section 4.1.2,  we could deduce that the limit $(u, \mathbf{c}^i, \Psi)$ satisfies system \eqref{Equ1.1}-\eqref{e1.6}. The regularity estimates for $(u, \mathbf{c}^i, \Psi)$ are a result of the lower semi-continuity of norms.

We proceed to improve the time regularity which is needed to justify well-defined of stopping time $\tau_R$. Since $(u, \mathbf{c}^i)\in L^2(0,T; ( H^2\cap \mathbb{H}^1)\times H^2)$, by interpolation \cite[Chapter 3]{Teman}, we could deduce that $$(u, \mathbf{c}^i)\in C([0,T]; \mathbb{H}^1\times H^1)$$ once we have $$\frac{d}{dt}(u, \mathbf{c}^i)\in L^2(0,T; \mathbb{L}^2\times L^2).$$
Decompose $u=\bar{u}+v$, where $\bar{u}$ and $v$ satisfy the following systems respectively:
\begin{eqnarray}\label{sys}
\left\{\begin{array}{ll}
\!\!\!\partial_t\mathbf{c}^i+(\bar{u}+v)\cdot \nabla \mathbf{c}^i=a_i{\rm div}(\nabla \mathbf{c}^i+z_i\mathbf{c}^i\nabla\Psi),~i=1,2,\cdots, m,\\
\!\!\!-\Delta \Psi=\sum_{i=1}^m z_i\mathbf{c}^i=\rho,\\
\!\!\!\partial_t \bar{u}-P\mu\Delta \bar{u}+P((\bar{u}+v)\cdot \nabla)(\bar{u}+v)+ P\kappa\rho\nabla \Psi =0,\\
\end{array}\right.
\end{eqnarray}
and
\begin{eqnarray}\label{4.48}
\left\{\begin{array}{ll}
\!\!\!dv-P\Delta vdt=Pf(u, \nabla \Psi)d\mathcal{W},\\
\!\!\!v_0=0,
\end{array}\right.
\end{eqnarray}
Since $f\in L^p(\Omega; L^2(0,T; L_2(\mathcal{H};\mathbb{L}^2)))$, we have
\begin{eqnarray}\label{4.49}
v\in L^p(\Omega; C([0,T]; \mathbb{H}^1)\cap L^2(0,T;H^2)).
\end{eqnarray}
From the system \eqref{sys} itself and the regularity estimates of $(u, \mathbf{c}^i, v)$, we can infer
$$\frac{d}{dt}(\bar{u}, \mathbf{c}^i)\in L^2(0,T; \mathbb{L}^2\times L^2),~\mathbb{P}\mbox{-a.s.}$$
We finally obtain
$$(\bar{u}, \mathbf{c}^i)\in C([0,T]; \mathbb{H}^1\times H^1),~ \mathbb{P}\mbox{-a.s.}$$
This together with \eqref{4.49} yields that
$$(u, \mathbf{c}^i)\in C([0,T]; \mathbb{H}^1\times H^1), ~\mathbb{P}\mbox{-a.s.}$$
The proof of Theorem \ref{thm} is completed.

\subsection{Proof of Theorem \ref{thm1}} In this section, we show that the explosion occurs simultaneously. Following the idea of \cite[Theorem 17]{cri}, we first show that:
\begin{proposition}\label{pro4.4} For $\mathcal{R}, \widetilde{\mathcal{R}}\in \mathbb{Z}^+ $, define two stopping time
\begin{eqnarray}
&&\tau_{1,\mathcal{R}}:=\inf\left\{t\geq 0; \|u\|_{\mathbb{H}^1}\geq \mathcal{R}\right\}\wedge \inf\left\{t\geq 0; \|\mathbf{c}^i\|_{H^1}\geq \mathcal{R}\right\},\nonumber \\
&&\tau_{2,\widetilde{\mathcal{R}}}:=\inf\left\{t\geq 0; \|\nabla u\|_{\mathbb{L}^2}\geq \widetilde{\mathcal{R}}\right\}\wedge \inf\left\{t\geq 0; \|\nabla\Psi\|_{ W^{1,3^+}}\geq \widetilde{\mathcal{R}}\right\},\nonumber
\end{eqnarray}
let $\tau_{1}=\lim_{\mathcal{R}\rightarrow\infty}\tau_{1,\mathcal{R}}$, $\tau_{2}=\lim_{\mathcal{R}\rightarrow\infty}\tau_{2,\widetilde{\mathcal{R}}}$, then we have $\mathbb{P}$\mbox{-a.s.}
$$\tau_1=\tau_2.$$
\end{proposition}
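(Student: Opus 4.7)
The plan is to establish the two inclusions $\tau_1\leq\tau_2$ and $\tau_2\leq\tau_1$ separately. The first is essentially soft and follows from norm inequalities together with elliptic regularity for $\Psi$, while the second is the real content of a blow-up criterion and requires deriving a priori $H^1$ estimates for $\mathbf{c}^i$ on the stopping interval $[0,\tau_{2,\widetilde{\mathcal{R}}}]$. On that interval the bounds $\|\nabla u\|_{\mathbb{L}^2}\leq\widetilde{\mathcal{R}}$ and $\|\nabla\Psi\|_{W^{1,3^+}}\leq\widetilde{\mathcal{R}}$ will play the role of the truncation $\Phi_R$ that appeared in Section 4.1.

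For $\tau_2\geq\tau_1$, on $\{t<\tau_{1,\mathcal{R}}\}$ one has $\|\nabla u(t)\|_{\mathbb{L}^2}=\|u(t)\|_{\mathbb{H}^1}<\mathcal{R}$ immediately from the definition $\|\cdot\|_{\mathbb{H}^1}:=\|\nabla\cdot\|_{L^2}$, and by the elliptic regularity estimate \eqref{e1.3} applied to $-\Delta\Psi=\rho$ with $\partial_{\vec{n}}\Psi=\eta$,
\begin{equation*}
\|\nabla\Psi\|_{W^{1,3^+}}\leq C\bigl(\|\rho\|_{L^{3^+}}+\|\eta\|_{W^{1-1/3^+,3^+}(\partial\mathcal{D})}\bigr)\leq C(\mathcal{R}+1),
\end{equation*}
using $H^1\hookrightarrow L^{3^+}$ in $d\leq 3$ and the hypothesis $\eta\in H^{5/2}(\partial\mathcal{D})$. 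Choosing $\widetilde{\mathcal{R}}:=C(\mathcal{R}+1)$ yields $\tau_{2,\widetilde{\mathcal{R}}}\geq\tau_{1,\mathcal{R}}$ pathwise, and letting $\mathcal{R}\to\infty$ gives $\tau_2\geq\tau_1$ almost surely.

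For $\tau_1\geq\tau_2$, since $\|u\|_{\mathbb{H}^1}=\|\nabla u\|_{\mathbb{L}^2}\leq\widetilde{\mathcal{R}}$ holds automatically on $[0,\tau_{2,\widetilde{\mathcal{R}}}]$, only $\|\mathbf{c}^i\|_{H^1}$ needs to be controlled there. The Sobolev embeddings $W^{1,3^+}\hookrightarrow L^\infty$ and $W^{1,3^+}\hookrightarrow W^{1,3}$ (both valid because $3^+>d$) furnish
\begin{equation*}
\|\nabla\Psi\|_{L^\infty}+\|\nabla\Psi\|_{L^6}+\|\Delta\Psi\|_{L^3}\leq C\widetilde{\mathcal{R}}\quad\text{on }[0,\tau_{2,\widetilde{\mathcal{R}}}].
\end{equation*}
I would then rerun the Moser-type iteration of Lemma \ref{lem2.1} with $\widetilde{\Phi}_R$ replaced by the above a priori bound to obtain $\mathbb{E}\sup_{t\leq T\wedge\tau_{2,\widetilde{\mathcal{R}}}}\|\mathbf{c}^i\|_{L^j}^p\leq C(\widetilde{\mathcal{R}},j,p)$ for every $j,p\in[1,\infty)$, and subsequently test the $\mathbf{c}^i$-equation against $-\Delta\mathbf{c}^i$ as in Lemma \ref{lem2.2} to derive
\begin{equation*}
\mathbb{E}\Bigl(\sup_{t\in[0,T\wedge\tau_{2,\widetilde{\mathcal{R}}}]}\|\mathbf{c}^i\|_{H^1}^{2p}\Bigr)+\mathbb{E}\Bigl(\int_0^{T\wedge\tau_{2,\widetilde{\mathcal{R}}}}\|\mathbf{c}^i\|_{H^2}^2\,dt\Bigr)^{p}\leq C(\widetilde{\mathcal{R}}).
\end{equation*}
The boundary integral generated by the blocking condition is absorbed via the trace estimate of Lemma \ref{lem2.1*} combined with the smallness hypothesis on $\|\eta\|_{L^\infty(\partial\mathcal{D})}$, exactly as in \eqref{e4.20}--\eqref{e4.21}. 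This guarantees $\limsup_{t\to\tau_{2,\widetilde{\mathcal{R}}}}\|\mathbf{c}^i\|_{H^1}<\infty$ almost surely, so for some pathwise $\mathcal{R}'$ we have $\tau_{1,\mathcal{R}'}\geq\tau_{2,\widetilde{\mathcal{R}}}$ and therefore $\tau_1\geq\tau_{2,\widetilde{\mathcal{R}}}$; sending $\widetilde{\mathcal{R}}\to\infty$ concludes the argument.

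The main technical obstacle I anticipate is the $H^1$ estimate for $\mathbf{c}^i$: the blocking boundary condition generates the boundary term $\int_{\partial\mathcal{D}}\eta z_i(\mathbf{c}^i)^2\,d\mathcal{S}$ whose control genuinely relies on the smallness of $\|\eta\|_{L^\infty(\partial\mathcal{D})}$, and the Moser iteration must be carried out so that every constant is tracked as a function of $\widetilde{\mathcal{R}}$ alone, independent of any approximation parameter. Apart from this, the scheme is a careful re-run of the a priori analysis of Section 4.1 with the cut-off constant $R$ replaced by the stopping-time threshold $\widetilde{\mathcal{R}}$.
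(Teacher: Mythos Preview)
Your proposal is correct and follows essentially the same two-step strategy as the paper's proof: the easy inequality $\tau_1\leq\tau_2$ from $\|(\nabla u,\nabla\Psi)\|_{\mathbb{L}^2\times W^{1,3^+}}\leq C\|(u,\mathbf{c}^i)\|_{\mathbb{H}^1\times H^1}$, and the hard inequality $\tau_2\leq\tau_1$ by re-running the a priori estimates of Lemmas~\ref{lem2.1}--\ref{lem2.2} with the cut-off replaced by the stopping-time bound $\widetilde{\mathcal{R}}$. Your observation that $\|u\|_{\mathbb{H}^1}=\|\nabla u\|_{\mathbb{L}^2}\leq\widetilde{\mathcal{R}}$ is automatic on $[0,\tau_{2,\widetilde{\mathcal{R}}}]$, so that only the $\mathbf{c}^i$-bound requires work, is a small simplification the paper does not make explicit; conversely, the paper stresses one technical point you glossed over, namely that the solution itself lacks the regularity to justify the $H^1$ energy identity directly, so a mollification (as in system~\eqref{M1.2}) is needed before the computation and then removed in the limit.
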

\begin{proof} The proof is divided into two steps.

\underline{Step 1}. It holds $\tau_{1}\leq \tau_2$, $\mathbb{P}$-a.s. Indeed, $$\sup_{t\in[0, \tau_{1,\mathcal{R}}]}\|(\nabla u, \nabla \Psi)\|_{\mathbb{L}^2\times W^{1,3^+}}\leq C\sup_{t\in[0, \tau_{1,\mathcal{R}}]}\|(u, \mathbf{c}^i) \|_{\mathbb{H}^1\times H^{1}}\leq C\mathcal{R},$$
which implies $\tau_{1, \mathcal{R}}\leq \tau_{2, C\mathcal{R}}\leq \tau_2$, $\mathbb{P}$-a.s. Finally, passing $\mathcal{R}\rightarrow \infty$, we get the desired result.

\underline{Step 2}. It holds $\tau_{1}\geq \tau_2$, $\mathbb{P}$-a.s. To begin with, we show for any $k, \widetilde{\mathcal{R}}\in \mathbb{Z}^+$
\begin{align}\label{4.47}
\mathbb{P}\left\{\sup_{{\tiny t\in [0, \tau_{2,\widetilde{\mathcal{R}}}\wedge k]}}\|(u, \mathbf{c}^i)\|_{\mathbb{H}^1\times H^1}<\infty\right\}=1.
\end{align}
Notice that the regularity of $u, \mathbf{c}^i$ is insufficient to use the It\^{o} formula to $\|u\|^2_{\mathbb{H}^1}$, therefore we have to mollify the system as \eqref{M1.2}. Then using the same argument as Lemmas \ref{lem2.1} and \ref{lem2.2}, we can find that there exists a constant $C(k, \widetilde{\mathcal{R}})$ such that
$$\mathbb{E}\left(\sup_{{\small t\in[0, \tau_{2,\widetilde{\mathcal{R}}}\wedge k]}}\|(u, \mathbf{c}^i)\|^{2}_{\mathbb{H}^1\times H^1}\right)\leq C.$$
Hence, \eqref{4.47} follows.

Since
\begin{align*}
\mathbb{P}\{\tau_{2}\leq \tau_1\}&=\mathbb{P}\left\{\cap_{\widetilde{\mathcal{R}}\in \mathbb{Z}^+}\left\{\tau_{2, \widetilde{\mathcal{R}}}\leq \tau_1\right\}\right\}=\mathbb{P}\left\{\cap_{k,\widetilde{\mathcal{R}}\in \mathbb{Z}^+}\left\{\tau_{2, \widetilde{\mathcal{R}}}\wedge k\leq \tau_1\right\}\right\}\nonumber\\
&\leq \mathbb{P}\left\{\tau_{2, \widetilde{\mathcal{R}}}\wedge k\leq \tau_1\right\},
\end{align*}
as well as
\begin{align*}
\left\{\sup_{t\in [0, \tau_{2,\widetilde{\mathcal{R}}}\wedge k]}\|(u, \mathbf{c}^i)\|_{\mathbb{H}^1\times H^1}<\infty\right\}&=\bigcup_{\mathcal{R}\in \mathbb{Z}^+}\left\{\sup_{t\in [0, \tau_{2,\widetilde{\mathcal{R}}}\wedge k]}\|(u, \mathbf{c}^i)\|_{\mathbb{H}^1\times H^1}<\mathcal{R}\right\}\nonumber\\
&\subset\bigcup_{\mathcal{R}\in \mathbb{Z}^+}\left\{\tau_{2,\widetilde{\mathcal{R}}}\wedge k\leq \tau_{1, \mathcal{R}}\right\}\nonumber\\
&\subset\bigcup_{\mathcal{R}\in \mathbb{Z}^+}\left\{\tau_{2,\widetilde{\mathcal{R}}}\wedge k\leq \tau_{1}\right\},
\end{align*}
yields the desired result. The proof is completed.
\end{proof}

From the time continuity of $\|(u, \mathbf{c}^i)\|_{\mathbb{H}^1\times H^1}$ and the uniqueness, we know that $\tau_1=\tau_2$ is the maximal existence time, then Theorem \ref{thm1} follows from Proposition \ref{pro4.4}.

\section{Global strong pathwise solution for 2d case}

In this section, we show that the maximal strong pathwise solution turns out to be global one in $2D$ case. We need to show that the lifetime $\tau=\infty$, $\mathbb{P}\mbox{-a.s.}$, that is,
\begin{align}\label{e2.52}
\mathbb{P}\{\tau<\infty\}=0.
\end{align}

Before proving \eqref{e2.52}, we give a critical estimate as mentioned in Remark \ref{rem2.2}.
\begin{lemma}\label{lem2.4} Assume that assumption \eqref{as1} holds. The initial data $u_0$ is $\mathcal{F}_0$-measurable random variable with $u_0\in L^4(\Omega; \mathbb{L}^2)$. Then, the strong pathwise solution $(u, \tau)$ satisfies the following estimate:
\begin{align}\label{e2.53}
\mathbb{E}\left(\sup_{t\in [0,T\wedge \tau]}\|u\|_{\mathbb{L}^2}^4\right)+\mathbb{E}\int_{0}^{T\wedge\tau}\|\nabla u\|_{\mathbb{L}^2}^2\|u\|_{\mathbb{L}^2}^2dt\leq C,
\end{align}
where the constant $C$ is independent of $\tau$.
\end{lemma}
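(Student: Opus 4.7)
The plan is to apply It\^o's formula to the fourth power $\|u\|_{\mathbb{L}^2}^4=(\|u\|_{\mathbb{L}^2}^2)^2$ and exploit the two-dimensional structure to absorb the Coulomb force contribution into the viscous dissipation. Using the cancellation $(u\cdot\nabla u,u)=0$ and computing the quadratic variation, one obtains an identity of the form
\begin{align*}
d\|u\|_{\mathbb{L}^2}^4 &+ 4\mu\|u\|_{\mathbb{L}^2}^2\|\nabla u\|_{\mathbb{L}^2}^2 dt = -4\kappa\|u\|_{\mathbb{L}^2}^2(\rho\nabla\Psi,u)dt \\
&+ 2\|u\|_{\mathbb{L}^2}^2\|Pf(u,\nabla\Psi)\|_{L_2(\mathcal{H};\mathbb{L}^2)}^2 dt + 4\sum_{k\geq 1}|(Pf(u,\nabla\Psi)e_k,u)|^2 dt \\
&+ 4\|u\|_{\mathbb{L}^2}^2(Pf(u,\nabla\Psi),u)d\mathcal{W}.
\end{align*}
The central step is to handle the Coulomb term. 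Using $-\Delta\Psi=\rho$ together with $\nabla\cdot u=0$ and $u|_{\partial\mathcal{D}}=0$, I would derive the Maxwell stress identity $(\rho\nabla\Psi,u)=(\nabla\Psi\otimes\nabla\Psi,\nabla u)$ by writing $-\Delta\Psi\,\partial_j\Psi=-\partial_i(\partial_i\Psi\,\partial_j\Psi)+\tfrac12\partial_j|\nabla\Psi|^2$ and integrating by parts, whence $|(\rho\nabla\Psi,u)|\leq\|\nabla\Psi\|_{L^4}^2\|\nabla u\|_{\mathbb{L}^2}$.

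Next, the 2D Gagliardo-Nirenberg inequality $\|v\|_{L^4}^2\leq C\|v\|_{L^2}\|v\|_{H^1}$ together with the elliptic regularity estimate \eqref{e1.3} for the Neumann problem gives
\[
\|\nabla\Psi\|_{L^4}^2\leq C\|\nabla\Psi\|_{L^2}\bigl(\|\rho\|_{L^2}+\|\eta\|_{H^{1/2}(\partial\mathcal{D})}+\|\nabla\Psi\|_{L^2}\bigr),
\]
and Young's inequality then yields
\[
4\kappa\|u\|_{\mathbb{L}^2}^2|(\rho\nabla\Psi,u)|\leq\mu\|u\|_{\mathbb{L}^2}^2\|\nabla u\|_{\mathbb{L}^2}^2+C\|u\|_{\mathbb{L}^2}^2\|\nabla\Psi\|_{L^2}^2\bigl(\|\rho\|_{L^2}^2+1\bigr),
\]
which can be absorbed into the $4\mu\|u\|_{\mathbb{L}^2}^2\|\nabla u\|_{\mathbb{L}^2}^2$ term on the left, leaving a Gronwall weight $g(t):=\|\nabla\Psi\|_{L^2}^2(\|\rho\|_{L^2}^2+1)$.

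For the noise terms, assumption \eqref{as1} gives $2\|u\|_{\mathbb{L}^2}^2\|Pf\|_{L_2}^2+4\sum_k|(Pfe_k,u)|^2\leq C\ell_1\|u\|_{\mathbb{L}^2}^2(\|u\|_{\mathbb{L}^2}^2+\|\nabla\Psi\|_{L^2}^2)$. The martingale part is controlled by Burkholder-Davis-Gundy:
\[
\mathbb{E}\sup_{s\leq t\wedge\tau}\left|\int_0^s 4\|u\|_{\mathbb{L}^2}^2(Pf,u)d\mathcal{W}\right|\leq\tfrac12\mathbb{E}\sup_{s\leq t\wedge\tau}\|u\|_{\mathbb{L}^2}^4+C\mathbb{E}\int_0^{t\wedge\tau}\|u\|_{\mathbb{L}^2}^2(\|u\|_{\mathbb{L}^2}^2+\|\nabla\Psi\|_{L^2}^2)ds,
\]
with the first term absorbed on the left. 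Invoking the basic energy bound (the $\varsigma=0$ analog of Lemmas \ref{lem3.1}--\ref{lemma3.2}), which controls $\mathbb{E}\sup\|\nabla\Psi\|_{L^2}^2$ and $\mathbb{E}\int(\|\nabla u\|_{\mathbb{L}^2}^2+\|\rho\|_{L^2}^2)ds$ independently of $\tau$, a Gronwall argument applied to $\Phi(t):=\mathbb{E}\sup_{s\leq t\wedge\tau}\|u\|_{\mathbb{L}^2}^4+\mathbb{E}\int_0^{t\wedge\tau}\|u\|_{\mathbb{L}^2}^2\|\nabla u\|_{\mathbb{L}^2}^2 ds$ closes the estimate \eqref{e2.53}.

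The main obstacle is to extract a $\|u\|_{\mathbb{L}^2}^2\|\nabla u\|_{\mathbb{L}^2}^2$ contribution from the Coulomb term so that it can be absorbed into the viscous dissipation. The Maxwell stress reformulation is the decisive manoeuvre: it trades a factor of $\nabla\Psi$ (not controlled in $L^\infty$) for a factor of $\nabla u$, and the 2D Gagliardo-Nirenberg interpolation leaves just enough room for Young's inequality. This is exactly what fails in three dimensions, where the Gagliardo-Nirenberg exponent in $\|\nabla\Psi\|_{L^4}^2\leq C\|\nabla\Psi\|_{L^2}^{1/2}\|\nabla\Psi\|_{H^1}^{3/2}$ causes the $\|\nabla u\|_{\mathbb{L}^2}^2$ power to exceed what the dissipation can absorb, explaining why the globalization is restricted to 2D.
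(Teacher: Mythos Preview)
Your argument is correct, but it handles the Coulomb force by a genuinely different mechanism than the paper does. The paper does \emph{not} use the Maxwell stress reformulation nor absorb anything into the dissipation $4\mu\|u\|_{\mathbb{L}^2}^2\|\nabla u\|_{\mathbb{L}^2}^2$. Instead it bounds the Coulomb contribution directly by
\[
\|u\|_{\mathbb{L}^2}^2|(\kappa\rho\nabla\Psi,u)|\leq C\|\rho\|_{L^2}\|\nabla\Psi\|_{L^\infty}\|u\|_{\mathbb{L}^2}^3\leq \tfrac14\|u\|_{\mathbb{L}^2}^4+C\|\rho\|_{L^2}^4\|\nabla\Psi\|_{L^\infty}^4,
\]
and then invokes the two-dimensional Agmon inequality $\|\nabla\Psi\|_{L^\infty}\leq C\|\nabla\Psi\|_{H^1}^{1/2}\|\nabla\Psi\|_{H^2}^{1/2}$ together with the already established bound $\Psi\in L^p(\Omega;L^\infty(0,T;H^2)\cap L^2(0,T;H^3))$ (the analogue of \eqref{e3.21}) to control $\mathbb{E}\int_0^{T\wedge\tau}\|\rho\|_{L^2}^4\|\nabla\Psi\|_{H^1}^2\|\nabla\Psi\|_{H^2}^2dt$ via H\"older in $\omega$. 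Your route trades a factor of $\nabla\Psi$ for $\nabla u$ through the stress-tensor identity $(\rho\nabla\Psi,u)=(\nabla\Psi\otimes\nabla\Psi,\nabla u)$, then uses the 2D Gagliardo--Nirenberg bound $\|\nabla\Psi\|_{L^4}^2\leq C\|\nabla\Psi\|_{L^2}\|\nabla\Psi\|_{H^1}$ and absorbs $\mu\|u\|_{\mathbb{L}^2}^2\|\nabla u\|_{\mathbb{L}^2}^2$ on the left. Both are specifically two-dimensional (Agmon for the paper, the $L^4$ interpolation for you), which is why neither closes in 3D. Your approach is slightly more economical in that it only needs $\nabla\Psi\in L^p_\omega L^\infty_t L^2$ and $\rho\in L^p_\omega L^\infty_t L^2$, whereas the paper additionally consumes the $L^2_tH^3$ regularity of $\Psi$; on the other hand the paper's version avoids any coupling between the Coulomb term and the viscous dissipation, which keeps the bookkeeping of absorbed constants simpler.
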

\begin{proof} Using the It\^{o} formula to the function $\left(\|u\|_{\mathbb{L}^2}^2\right)^2$, we obtain
\begin{align}\label{e2.54}
d\|u\|_{\mathbb{L}^2}^4&+4\|\nabla u\|_{\mathbb{L}^2}^2\|u\|_{\mathbb{L}^2}^2dt+4(\kappa\rho\nabla \Psi, u)\|u\|_{\mathbb{L}^2}^2dt= 4(f(u, \nabla\Psi), u)\|u\|_{\mathbb{L}^2}^2d\mathcal{W}\nonumber\\
&+4(f(u,\nabla\Psi), u)^2dt+2\|Pf(u, \nabla\Psi)\|_{L_2(\mathcal{H};\mathbb{L}^2)}^2\|u\|_{\mathbb{L}^2}^2dt.
\end{align}
Integrating of $t$, taking supremum over $t\in [0, T\wedge \tau]$ and expectation, we have
\begin{align}\label{e5.4}
\mathbb{E}\bigg(\sup_{t\in [0, T\wedge \tau]}\|u\|_{\mathbb{L}^2}^4\bigg)&+\mathbb{E}\int_{0}^{T\wedge \tau}\|\nabla u\|_{\mathbb{L}^2}^2\|u\|_{\mathbb{L}^2}^2dt\leq C\mathbb{E}\int_{0}^{T\wedge \tau}|(\kappa\rho\nabla \Psi, u)|\|u\|_{\mathbb{L}^2}^2dt\nonumber\\&+C\mathbb{E}\bigg(\sup_{t\in [0, T\wedge \tau]}\left|\int_{0}^{t}(f(u, \nabla\Psi), u)\|u\|_{\mathbb{L}^2}^2d\mathcal{W}\right|\bigg)\nonumber\\&
+C\mathbb{E}\int_{0}^{T\wedge \tau}\|Pf(u, \nabla\Psi)\|_{L_2(\mathcal{H};\mathbb{L}^2)}^2\|u\|_{\mathbb{L}^2}^2dt\nonumber\\
&+ C\mathbb{E}\int_{0}^{T\wedge \tau}(f(u,\nabla\Psi), u)^2dt.
\end{align}
Estimating the first term in the right-hand side by the H\"{o}lder inequality and the Agmon inequality $\|v\|_{L^\infty}\leq C\|v\|_{H^1}^\frac{1}{2}\|v\|_{H^2}^\frac{1}{2}$, we see
\begin{align}
&\mathbb{E}\int_{0}^{T\wedge \tau}|(\kappa\rho\nabla \Psi, u)|\|u\|_{\mathbb{L}^2}^2dt\nonumber\\ &\leq \mathbb{E}\int_{0}^{T\wedge \tau}\kappa\|\rho\|_{L^2}\|\nabla \Psi\|_{L^\infty}\|u\|_{\mathbb{L}^2}^3dt\nonumber\\
&\leq \frac{1}{4}\mathbb{E}\bigg(\sup_{t\in [0, T\wedge \tau]}\|u\|_{\mathbb{L}^2}^4\bigg)+C\mathbb{E}\int_{0}^{T\wedge \tau}\kappa\|\rho\|_{L^2}^4\|\nabla \Psi\|_{L^\infty}^4dt\nonumber\\
&\leq \frac{1}{4}\mathbb{E}\bigg(\sup_{t\in [0, T\wedge \tau]}\|u\|_{\mathbb{L}^2}^4\bigg)+C\mathbb{E}\int_{0}^{T\wedge \tau}\kappa\|\rho\|_{L^2}^4\|\nabla \Psi\|_{H^1}^2\|\nabla \Psi\|_{H^2}^2dt\nonumber\\
&\leq \frac{1}{4}\mathbb{E}\bigg(\sup_{t\in [0, T\wedge \tau]}\|u\|_{\mathbb{L}^2}^4\bigg)+C\left[\mathbb{E}\bigg(\sup_{t\in [0, T\wedge \tau]}\big(\kappa\|\rho\|_{L^2}^8\|\nabla \Psi\|_{H^1}^4\big)\bigg)\right]^\frac{1}{2}\nonumber\\
&\quad\times\left[\mathbb{E}\left(\int_{0}^{T\wedge \tau}\|\nabla \Psi\|_{H^2}^2dt\right)^2\right]^\frac{1}{2}\nonumber\\
&\leq C\left(\mathbb{E}\bigg(\sup_{t\in [0, T]}\|\nabla \Psi\|_{H^1}^{8}\bigg)\right)^\frac{1}{2}\cdot\left(\mathbb{E}\bigg(\sup_{t\in [0, T]}\|\rho\|_{L^2}^{16}\bigg)\right)^\frac{1}{2}\cdot\left[\mathbb{E}\left(\int_{0}^{T}\|\nabla \Psi\|_{H^2}^2dt\right)^2\right]^\frac{1}{2}\nonumber\\
&\quad+\frac{1}{4}\mathbb{E}\bigg(\sup_{t\in [0, T\wedge \tau]}\|u\|_{\mathbb{L}^2}^4\bigg).
\end{align}
The Burkholder-Davis-Gundy inequality as well as assumption \eqref{as1} yield
\begin{align}
&\mathbb{E}\left(\sup_{t\in [0, T\wedge \tau]}\left|\int_{0}^{t}(f(u,\nabla\Psi), u)\|u\|_{\mathbb{L}^2}^2d\mathcal{W}\right|\right)\nonumber\\
&\leq C\mathbb{E}\left(\int_{0}^{T\wedge \tau}\|f(u, \nabla\Psi)\|_{L_2(\mathcal{H};\mathbb{L}^2)}^2\|u\|_{\mathbb{L}^2}^6 dt\right)^\frac{1}{2}\nonumber\\
&\leq C\mathbb{E}\left(\int_{0}^{T\wedge \tau}(\|\mathbf{c}_i\|_{L^2}^2+\|u\|_{\mathbb{L}^2}^2)\|u\|_{\mathbb{L}^2}^6 dt\right)^\frac{1}{2}\nonumber\\
&\leq \frac{1}{4}\mathbb{E}\bigg(\sup_{t\in [0, T\wedge \tau]}\|u\|_{\mathbb{L}^2}^4\bigg)+C\mathbb{E}\int_{0}^{T\wedge \tau}\|\mathbf{c}_i\|_{L^2}^4+\|u\|_{\mathbb{L}^2}^4 dt.
\end{align}
Again, by assumption \eqref{as1}, we get
\begin{align}\label{e2.58}
\mathbb{E}\int_{0}^{T\wedge \tau}\|Pf(u, \nabla\Psi)\|_{L_2(\mathcal{H};\mathbb{L}^2)}^2\|u\|_{\mathbb{L}^2}^2dt\leq C\mathbb{E}\int_{0}^{T\wedge \tau}\|\mathbf{c}_i\|_{L^2}^4+\|u\|_{\mathbb{L}^2}^4 dt.
\end{align}
We could estimate the last term in \eqref{e5.4} analogue to \eqref{e2.58}.
Considering \eqref{e2.54}-\eqref{e2.58} and the bound \eqref{e3.21}, we deduce that \eqref{e2.53} holds after using the Gronwall lemma.
\end{proof}

Now, inspired by \cite{Glatt}, we show that \eqref{e2.52} holds with the help of bound \eqref{e2.53}. Since the stopping time $\tau$ is accessible, there exists a sequence of increasing stopping time $\varrho_R$ such that
$$\{\tau<\infty\}=\cap_{R=1}^\infty\{\varrho_R<\infty\}=\cup_{T=1}^{\infty}\cap_{R=1}^\infty\{\varrho_R<T\}.$$
Define the stopping time
$$\tau_M:=\inf\left\{t\geq 0; \sup_{s\in [0, t\wedge \tau]}\|\nabla\Psi\|_{L^6}\geq M\right\},$$
if the set is empty, let $\tau_M=\infty$. Since $\Psi\in L^p(\Omega; L^\infty(0,T; H^2))$, we obtain $\mathbb{P}$-a.s.
$$\lim_{M\rightarrow\infty}\tau_M=\infty.$$
Define another stopping time
\begin{align*}
\tau_K:=\inf\left\{t\geq 0; \sup_{s\in [0, t\wedge \tau\wedge \tau_M]}\|\Delta \Psi\|_{L^3}+\int_{0}^{t\wedge \tau}\|\nabla u\|^2_{\mathbb{L}^2}\|u\|_{\mathbb{L}^2}^2ds\geq K\right\}\wedge T.
\end{align*}
Then, we have
\begin{align}\label{e2.61}
\mathbb{P}\{\varrho_R<T\}&=\mathbb{P}\{(\varrho_R<T)\cap (\tau_K> T)\}+\mathbb{P}\{(\varrho_R<T)\cap (\tau_K\leq T)\}\nonumber\\
&\leq \mathbb{P}\{(\varrho_R<T)\cap (\tau_K> T)\}+\mathbb{P}\{\tau_K\leq T\}.
\end{align}
From the definition of $\tau_K$ and the Chebyshev inequality, we know
\begin{align}\label{e2.62}
\mathbb{P}\{\tau_K\leq T\}&=\mathbb{P}\left\{\sup_{t\in [0, T\wedge \tau\wedge \tau_M]}\|\Delta\Psi\|_{L^3}+\int_{0}^{T\wedge \tau}\|\nabla u\|^2_{\mathbb{L}^2}\|u\|_{\mathbb{L}^2}^2dt\geq K\right\}\nonumber\\
&\leq \frac{1}{K}\mathbb{E}\left(\sup_{t\in [0, T\wedge \tau\wedge \tau_M]}\|\Delta\Psi\|_{L^3}+\int_{0}^{T\wedge \tau}\|\nabla u\|^2_{\mathbb{L}^2}\|u\|_{\mathbb{L}^2}^2dt\right)\\
&\leq\frac{C}{K}\mathbb{E}\left(\sup_{t\in [0, T\wedge \tau\wedge \tau_M]}\sum_{i=1}^m\|\mathbf{c}^i\|_{L^3}+\int_{0}^{T\wedge \tau}\|\nabla u\|^2_{\mathbb{L}^2}\|u\|_{\mathbb{L}^2}^2dt\right).\nonumber
\end{align}
From Lemma \ref{lem2.4}, we deduce
\begin{align}
\mathbb{E}\left(\int_{0}^{T\wedge \tau}\|\nabla u\|^2_{\mathbb{L}^2}\|u\|_{\mathbb{L}^2}^2dt\right)\leq C.
\end{align}
Moreover, we have
\begin{align}\label{e2.64}
\mathbb{E}\left(\sup_{t\in [0, T\wedge \tau\wedge \tau_M]}\sum_{i=1}^m\|\mathbf{c}^i\|_{L^3}\right)\leq C(M).
\end{align}
The estimate \eqref{e2.64} is totally same with that of Lemma \ref{lem2.1} and \eqref{e4.1}. Indeed, according to \eqref{e2.5} in Lemma \ref{lem2.1}, the bound will follow once we control the $\|\nabla\Psi\|_{L^6}$, here the stopping time $\tau_M$ plays an important role. Considering \eqref{e2.62}-\eqref{e2.64}, we have
\begin{align}\label{e2.65}
\lim_{K\rightarrow\infty}\mathbb{P}\{\tau_K\leq T\}=0.
\end{align}

From the definition of $\varrho_R$, we have
\begin{align}\label{e2.66}
\mathbb{P}\{(\varrho_R<T)\cap (\tau_K> T)\}&=\mathbb{P}\left\{\left(\sup_{t\in [0, T\wedge \tau]}\|(\nabla u, \nabla \mathbf{c}^i)\|^2_{L^2}>R\right)\cap(\tau_K> T)\right\}\nonumber\\
&\leq \mathbb{P}\left\{\sup_{t\in [0, \tau_K\wedge \tau]}\|(\nabla u, \nabla \mathbf{c}^i)\|^2_{L^2}>R\right\}\\
&\leq \frac{1}{R}\mathbb{E}\left(\sup_{t\in [0, \tau_K\wedge \tau]}\|(\nabla u, \nabla \mathbf{c}^i)\|^2_{L^2}\right).\nonumber
\end{align}
Therefore, we need that the bound $\mathbb{E}\left(\sup_{t\in [0, \tau_K\wedge \tau]}\|(\nabla u, \nabla \mathbf{c}^i)\|^2_{L^2}\right)$ is uniform for $R$. The estimate could be implemented by a same argument as Lemmas \ref{lem2.1}, \ref{lem2.2}. As mentioned in Remark \ref{rem2.2}, we use the stopping time $\tau_K$ to control the extra terms $\|\nabla u\|^2_{\mathbb{L}^2}\|u\|_{\mathbb{L}^2}^2$ in \eqref{e2.22} and \eqref{e4.11}, $\|\Delta\Psi\|_{L^3}$ in \eqref{e4.19}, therefore, there exists a constant $C$ depending on $K$ but independent of $R$ such that
\begin{align}\label{e2.67}
\mathbb{E}\left(\sup_{t\in [0, \tau_K\wedge \tau]}\|(\nabla u, \nabla \mathbf{c}^i)\|^2_{L^2}\right)\leq C(K).
\end{align}
Hence, from \eqref{e2.66}, \eqref{e2.67}, we find for fixed $K$
\begin{align}\label{e2.68}
\lim_{R\rightarrow\infty}\mathbb{P}\{(\varrho_R<T)\cap (\tau_K> T)\}=0.
\end{align}
Since $\varrho_R$ is increasing, using the continuity of finite measure,  we have
\begin{align*}
\mathbb{P}\left\{\cap_{R=1}^\infty\{\varrho_R<T\}\right\}=\lim_{N\rightarrow\infty}\mathbb{P}\left\{\cap_{R=1}^N\{\varrho_R<T\}\right\}\leq \lim_{N\rightarrow\infty}\mathbb{P}\left\{\varrho_N<T\right\}.
\end{align*}
Replacing the parameter $R$ by $N$ in \eqref{e2.61}, \eqref{e2.65}, \eqref{e2.68},  and taking $N\rightarrow \infty$ for fixed $K$, yield that
$$\lim_{N\rightarrow\infty}\mathbb{P}\left\{\varrho_N<T\right\}=\mathbb{P}\{\tau_K\leq T\}.$$
Then, passing $K\rightarrow\infty$, we establish estimate \eqref{e2.52}. Then, the maximal strong pathwise solution in two dimensional case is thus global one.

\section*{Acknowledgments}
H. Wang is supported by the National Natural Science Foundation of China (Grant No.11901066), the Natural Science Foundation of Chongqing (Grant No.cstc2019jcyj-msxmX0167) and projects No.2019CDXYST0015 and No.2020CDJQY-A040 supported by the Fundamental Research Funds for the Central Universities.

\end{document}